\documentclass[11pt,a4paper]{article}

\usepackage[T1]{fontenc}
\usepackage{lmodern}
\usepackage[left=27mm,right=27mm,top=25mm,bottom=26mm,
            headheight=12pt,headsep=14pt,footskip=25pt]{geometry}
\usepackage{amsmath,amssymb,amsthm,mathtools,aliascnt}
\usepackage{microtype,booktabs,array,longtable,tabularx,needspace}
\usepackage{fancyhdr,titlesec}
\usepackage{tikz}
\usetikzlibrary{arrows.meta}
\usepackage[font=small,labelfont=bf,labelsep=period,skip=7pt]{caption}
\usepackage[hidelinks]{hyperref}
\usepackage[capitalize,nameinlink,noabbrev]{cleveref}
\allowdisplaybreaks[2]
\titleformat{\section}{\normalfont\large\bfseries}{\thesection}{0.7em}{}
\titlespacing*{\section}{0pt}{2.6ex plus .6ex minus .3ex}{1.1ex plus .2ex}
\titleformat{\subsection}{\normalfont\normalsize\bfseries}{\thesubsection}{0.65em}{}
\titlespacing*{\subsection}{0pt}{2.1ex plus .5ex minus .2ex}{0.8ex plus .2ex}
\fancypagestyle{plain}{\fancyhf{}\fancyfoot[C]{\small\thepage}}
\makeatletter
\renewcommand{\maketitle}{%
  \thispagestyle{plain}%
  \begin{center}
    {\fontsize{18}{23}\selectfont\bfseries\@title\par}
    \vspace{12pt}
    {\normalsize\begin{tabular}[t]{c}\@author\end{tabular}\par}
  \end{center}
  \vspace{4pt}}
\makeatother
\numberwithin{equation}{section}
\newtheorem{theorem}{Theorem}[section]
\newaliascnt{lemma}{theorem}
\newtheorem{lemma}[lemma]{Lemma}
\aliascntresetthe{lemma}
\newaliascnt{proposition}{theorem}
\newtheorem{proposition}[proposition]{Proposition}
\aliascntresetthe{proposition}
\newaliascnt{corollary}{theorem}
\newtheorem{corollary}[corollary]{Corollary}
\aliascntresetthe{corollary}
\theoremstyle{definition}
\newaliascnt{definition}{theorem}

\aliascntresetthe{definition}
\theoremstyle{remark}
\newaliascnt{remark}{theorem}

\aliascntresetthe{remark}
\newaliascnt{example}{theorem}

\aliascntresetthe{example}
\newcommand{\R}{\mathbb R}
\newcommand{\E}{\mathbb E}

\DeclareMathOperator{\diag}{diag}
\DeclareMathOperator{\Tr}{Tr}
\DeclareMathOperator{\disc}{disc}
\DeclareMathOperator{\supp}{supp}
\DeclareMathOperator{\rank}{rank}
\DeclareMathOperator{\range}{range}
\newcommand{\one}{\mathbf 1}
\newcommand{\cV}{\mathcal V}
\newcommand{\cD}{\mathcal D}
\newcommand{\cE}{\mathcal E}
\newcommand{\cH}{\mathcal H}
\newcommand{\cF}{\mathcal F}
\newcommand{\cP}{\mathcal P}
\newcommand{\cC}{\mathcal C}
\newcommand{\cR}{\mathcal R}

\newcommand{\norm}[1]{\lVert#1\rVert}
\newcommand{\ip}[2]{\langle#1,#2\rangle}
\newcommand{\logplus}{\log_+}
\hypersetup{pdftitle={A (log n)\textasciicircum(1/4) Bound for the Koml\'os Problem},
 pdfauthor={Eren Ercan},pdflang={en-GB},pdfsubject={Combinatorial discrepancy and affine spectral independence}}

\newcommand{\cT}{\mathcal T}

\title{A $({\log n})^{1/4}$ Bound for the Koml\'os Problem}
\author{Eren Ercan\\[4pt]{\small\href{mailto:eren321@gmail.com}{\texttt{eren321@gmail.com}}}}
\date{}
\begin{document}
\maketitle
\begin{abstract}
Let $A\in\R^{m\times n}$ have columns of Euclidean norm at most one.
We prove that
\[
 \disc(A)\le2395\left(1+\log_+\frac n9\right)^{1/4}+2\sqrt2.
\]
Here $\log_+t=\max\{0,\log t\}$. Building on Bansal and Jiang's affine
spectral independence framework, we remove the $(\log\log n)^{7/4}$
factor from their bound. The fourth root comes
from balancing the logarithmic decrease in the alive dimension against
the fourth power of the row thresholds. Historical exponential sums control the
covariance budget across size classes with summable thresholds.
An exact threshold-sum certificate gives the
coefficient $2395$, and rounding at most eight remaining fractional
coordinates costs $2\sqrt2$. The finite construction also gives partial
colourings from any prescribed starting point and at any prescribed
depth, preserving existing signs. We formalize the partial- and
full-colouring theorems in Lean, including the finite trajectory, exact
threshold sum and final rounding, with Bansal--Jiang Theorem~A.4 as the
sole external research theorem assumption.
\end{abstract}
\section{Introduction and results}\label{sec:introduction}
For a real matrix $A\in\R^{m\times n}$, its combinatorial discrepancy is
\[
 \disc(A)=\min_{\sigma\in\{-1,1\}^n}\norm{A\sigma}_\infty.
\]
The Koml\'os conjecture asserts that this quantity is bounded by an
absolute constant when every column of $A$ has Euclidean norm at most one.
Banaszczyk established an $O(\sqrt{\log n})$ upper bound~\cite{Ban98}.
Bansal and Jiang introduced decoupling via affine spectral
independence~\cite{BJ25}. Their bound, in the form given in their
subsequent exposition, is
$O((\log n)^{1/4}(\log\log n)^{7/4})$~\cite{BJ26}.
Building on their framework and covariance theorem, we prove the
following bound.

\begin{theorem}\label{thm:main}
Let $m,n\ge1$ and let $A\in\R^{m\times n}$ satisfy
$\sum_{i=1}^m A_{ij}^2\le1$ for every $j\in[n]$. Then
\begin{equation}\label{eq:main}
 \disc(A)\le2395\left(1+\logplus\frac n9\right)^{1/4}+2\sqrt2,
 \qquad \logplus u=\max\{0,\log u\}\quad(u>0).
\end{equation}
\end{theorem}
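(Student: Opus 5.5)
We follow the Bansal--Jiang affine spectral independence framework. We build a random walk on the fractional cube $[-1,1]^n$ whose covariance is controlled at each step by the covariance theorem (Bansal--Jiang Theorem~A.4), which we take as a black box. The walk runs in phases indexed by the alive dimension. We write $n_k$ for the number of alive (not yet frozen) coordinates in phase $k$. Within a phase the walk evolves until a constant fraction of alive coordinates freeze at $\pm1$, which the standard energy-increment argument guarantees. Once the alive dimension drops to at most $8$, we stop. The remaining fractional coordinates are rounded arbitrarily. Each rounded coordinate moves by at most $2$ in one column of unit norm. Hence, after Cauchy--Schwarz over at most eight columns, each row moves by at most $\sum_j |A_{ij}|\cdot|\Delta_j|$, and the final rounding costs at most $2\sqrt2$.

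The key steps are as follows. \emph{(i)} For every row $i$ we maintain a threshold $\lambda_{i,k}$ per phase. Rows whose accumulated discrepancy crosses the threshold, together with heavy rows whose current mass on alive columns is large, are placed in a blocking subspace. The walk step is taken orthogonal to that subspace. Affine spectral independence lets us choose, inside the admissible subspace, a step distribution whose covariance $\Sigma$ satisfies $\langle a_i,\Sigma a_i\rangle\lesssim \|a_i|_{\text{alive}}\|^2/n_k$-type bounds. This is the content of the imported theorem. \emph{(ii)} We group rows into size classes by the dyadic scale of their alive mass. For each class we track the discrepancy increment in phase $k$ by a martingale bound. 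The variance of that increment is bounded by the covariance budget, and the number of rows blocked in that class is controlled by a Markov count. The count must stay below a small fraction of $n_k$, since otherwise the admissible subspace collapses. \emph{(iii)} Instead of a union bound costing $\log\log n$ per phase, we use historical exponential sums: potentials of the form $\sum_i \exp(\lambda\,\text{disc}_i^2/\text{budget})$ accumulated across all previous phases. This lets the thresholds in different size classes and phases be chosen as a \emph{summable} sequence, for instance geometrically decaying in the distance from the current scale, so that no $\log\log$ factor appears.

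The exponent comes from the following balance. Choosing thresholds $\lambda_k$ in phase $k$ blocks roughly $n_k/\lambda_k^4$ rows: the fourth power arises from combining the quadratic energy with the variance of the quadratic potential. The number of phases is $\log(n/9)$, with the $9$ absorbing the terminal dimension. Hence the total discrepancy is $\sum_k \lambda_k$ subject to the constraint that the blocked fraction is summable. Setting $\lambda\asymp(1+\logplus(n/9))^{1/4}$ uniformly within the dominant scale balances these and yields the bound $C(1+\logplus n/9)^{1/4}$. The explicit coefficient $2395$ is then obtained by summing the exact threshold sequence. This is a finite computation that we would verify as a closed-form geometric-series certificate.

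We expect step (iii) to be the main obstacle: showing that the historical exponential sums stay bounded in expectation across phases while the alive dimension shrinks. Rows that are light now but were heavy earlier must not double-count their budget. I would first try a supermartingale argument, rescaling each row's potential by the ratio $n_{k+1}/n_k$ at phase boundaries and charging the loss to the frozen coordinates. If that fails, I would fall back on per-class thresholds with polynomially decaying weights $\lambda_{c}\propto c^{-2}$.
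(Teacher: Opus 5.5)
Your outline names several of the right ingredients: the Theorem~A.4 covariance, blocking of large and over-threshold rows, size classes, historical exponential sums, summable thresholds, and stopping at eight fractional coordinates. But step (iii), which you flag as the obstacle, is where the proof lives, and what you propose there does not work.

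First, a potential $\sum_i\exp(\lambda\,\mathrm{disc}_i^2/\mathrm{budget})$ has no mechanism making it a supermartingale. The paper instead tracks, for each signed restriction $c$ born at $u$, a capped exponential $\min\{e^{L_k},e^{\lambda_kZ}\}$ of the \emph{linear} regularized discrepancy $Z=\varepsilon\ip{c}{x-u}+\beta_k(\cE_x(c)-\cE_u(c))$. The energy term supplies the drift $-\beta_k\gamma^2\sum_jc(j)^2Q(j,j)$. Coordinate spectral independence lets this drift absorb the second-order term of each inner exponential. Affine spectral independence bounds the variance $r^{\mathsf T}Cr$ of their weighted sum (\cref{lem:second-derivative}). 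You attribute the bound $\ip{a_i}{\Sigma a_i}\lesssim\norm{a_i}^2/n_k$ to affine independence, but that bound is the coordinate condition plus the trace bound.

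Second, the fourth power does not come from ``quadratic energy times the variance of a quadratic potential.'' It needs peeling: removing the coordinates with $|c(j)|>s_k/B_k$, whose $\ell_1$ mass is at most $B_k$. This gives $\alpha_kq_a^{(j)}\le\rho_k\asymp B_k^{-2}$. Combined with $\lambda_k/\theta_k\asymp s_kL_k/B_k^2$, it yields the coefficient $s_kL_k(L_k+R)/B_k^4$ in the dangerous count.

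Third, the dependence on $n$ enters differently. The paper has no phases. It weights each record at column $j$ by $q_a^{(j)}=c_a(j)^2/s_k$ and subtracts birth mass, $G=\Phi-I$, using $I^{(j,k)}\le2/s_k$ from the column norms; this is what stops retired or formerly heavy restrictions from being double-counted. It then keeps one aggregate potential $\Psi=\sum_{k,j}w_ke^{G^{(j,k)}/K_k}\le q$ over all original columns. The factor $\log(q/h)\le R$ appears exactly once, via Jensen over the current alive columns (\cref{lem:danger-count}). Taken literally, your $\sum_k\lambda_k$ over $\log(n/9)$ phases with uniform $\lambda\asymp(\log n)^{1/4}$ is $(\log n)^{5/4}$. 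You never show that a row's discrepancy is one threshold sum rather than a sum over phases. In the paper that comes from each row having at most one peel and one tracked interval per size class, together with the row identity (\cref{lem:one-birth,lem:row-identity}), which gives $\frac72\sum_kB_k+2+21\gamma$.

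Two further steps fail as written. Control ``in expectation'' does not keep the SDP feasible. The dangerous count must be below $\delta_kh/2$ at every visited state. The paper guarantees this by choosing, at each step, an atom of the finite symmetric law whose complete successor has $\Psi^+\le\Psi$ (\cref{prop:successor}). That choice rests on a finite-mesh estimate: symmetry removes odd Taylor terms, the buffer gives curvature of order $\gamma^3\mathcal D$, and the fourth-order remainder is $O(\gamma^4\mathcal D)$ uniformly.

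Your final rounding is also wrong. Rounding eight coordinates arbitrarily can cost almost $16$: take eight columns equal to $e_1$ with $y_j$ near $-1$ rounded to $+1$. Your Cauchy--Schwarz step does not produce $2\sqrt2$. The correct step is independent random rounding with $\E\sigma_j=y_j$. This gives $\E\norm{A(\sigma-y)}_2^2=\sum_j(1-y_j^2)\norm{A_{\cdot j}}_2^2\le8$, so some signing has $\norm{A(\sigma-y)}_\infty\le2\sqrt2$ (\cref{lem:euclidean-rounding}).

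Finally, $2395$ cannot come out of the proposal without an explicit schedule. In the paper it equals $\frac72\cdot\frac{17091}{25}+2+\frac{13}{50}$. This uses the certified bound $\sum_kB_k(R)\le\frac{17091}{25}(1+R)^{1/4}$, a reserve of $\frac{13}{50}$ for the mesh, and $R=\logplus(n/9)$, which leaves at most eight fractional coordinates.
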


\begin{samepage}
The construction starts from any given fractional colouring. A coordinate
of a point in $[-1,1]^n$ is \emph{fractional} when its absolute value is
strictly less than one. All logarithms are natural.

\begin{theorem}[Prescribed-depth partial colouring]\label{thm:partial}
Let $A$ satisfy the hypotheses of \cref{thm:main}, let $b\in[-1,1]^n$,
and let $q$ be the number of fractional coordinates of $b$. For every
$R\ge0$, there is $y\in[-1,1]^n$ such that $y_j=b_j$ whenever
$|b_j|=1$, at most
\begin{equation}\label{eq:target}
 N_R(q):=\max\{\lceil qe^{-R}\rceil-1,8\}
\end{equation}
coordinates of $y$ are fractional, and
\begin{equation}\label{eq:partial}
 \norm{A(y-b)}_\infty\le2395(1+R)^{1/4}.
\end{equation}
For $q>8$, the endpoint is attained by a finite construction at a positive
mesh, using a feasible covariance and a finite direction law at each step.
For $q\le8$, one may take $y=b$.
\end{theorem}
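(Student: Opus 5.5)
We handle the trivial case first. If $q\le8$ then $N_R(q)\ge8\ge q$, so $y=b$ satisfies every requirement with $\norm{A(y-b)}_\infty=0$. For $q>8$ we will build $y$ as the endpoint of a finite random walk $x_0=b,x_1,\dots$ in $[-1,1]^n$. The walk moves only the coordinates that are still fractional (the \emph{alive} set $V_t$), and a coordinate is frozen the moment it reaches $\pm1$. This automatically gives $y_j=b_j$ whenever $|b_j|=1$. Each step has the form $x_{t+1}=x_t+\gamma\,\Sigma_t^{1/2}g_t$, where $\gamma>0$ is a small mesh and $g_t$ is drawn from a finite symmetric direction law with identity covariance (for instance Rademacher). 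The walk stops once $|V_t|\le N_R(q)$.

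We organise time into \emph{size classes} $k=0,1,\dots,K$, where class $k$ consists of the steps with $|V_t|\in(qe^{-k-1},qe^{-k}]$, so that $K\le\lceil R\rceil$. Each row $i$ and class $k$ receive a threshold $\lambda_k$. The choice is $\lambda_k=c\,(1+R)^{1/4}\theta_k$ with $\sum_k\theta_k\le1$; this is where the constant $2395$ will later be certified. At every step we invoke Bansal--Jiang Theorem~A.4 (affine spectral independence) on the alive subspace, restricted to the orthogonal complement of the rows that are currently \emph{dangerous}, meaning their accumulated discrepancy within the current class is near $\lambda_k$. This gives a PSD matrix $\Sigma_t$ supported on $V_t$ with three properties: $\Tr\Sigma_t\ge c_0|V_t|$; $\ip{a_i}{\Sigma_ta_i}=0$ for dangerous rows; and an affine spectral-independence bound controlling how the covariance mass is distributed across rows. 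The dangerous rows can be excluded as long as they number at most a small fraction of $|V_t|$. The trace lower bound, together with an energy argument ($\sum_j x_j^2\le n$ increases in expectation by $\gamma^2\Tr\Sigma_t$), forces every class to end after finitely many steps with high probability.

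The core is a potential argument. For each class we track historical exponential sums of the form $\Phi_k=\sum_i\exp\bigl(\text{(class-$k$ discrepancy of row $i$)}^2/\text{(variance budget)}\bigr)$, or the analogous sub-Gaussian moment generating function. The variance budget is what the Bansal--Jiang covariance spends on row $i$ during class $k$, and the spectral-independence bound caps its total over all rows by roughly $O(|V|)$ per unit of trace. Martingale bounds at mesh $\gamma\to0$ then show that the expected number of rows whose class-$k$ discrepancy exceeds $\lambda_k$ is at most $|V|\exp(-\lambda_k^2/\sigma_k^2)$. We need this count to stay below a constant fraction of $|V|\sim qe^{-k}$, and we need the blocked rows never to exhaust the alive dimension. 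Summing the losses across classes, the tradeoff reads: the dimension drops by a factor $e$ per class, while the fourth power of the threshold enters through the variance-of-variance term (the second-order covariance budget). Balancing the two yields $\sum_k\lambda_k\lesssim(1+R)^{1/4}$. Since the discrepancy of each row over the full run is at most $\sum_k\lambda_k$, plus $O(\gamma)$ overshoot at each freeze, there exists a realization that satisfies \eqref{eq:partial}.

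\textbf{Main obstacle.} The hardest step is controlling the covariance budget uniformly across size classes. A row can absorb variance in many classes, and a naive union bound over the $\asymp R$ classes reintroduces the $(\log\log n)^{7/4}$ loss. The fix we will try first is to carry the historical exponential sums across class boundaries, without resetting them, and to choose summable $\theta_k$, for example $\theta_k\propto(k+1)^{-2}$ or a geometric profile tuned to the $e^{-k}$ decay of $|V|$. A single supermartingale then bounds all classes simultaneously. Finally, the exact constant $2395$ is obtained by evaluating the resulting threshold sum explicitly, and the remaining fractional coordinates are handled by the $\max\{\cdot,8\}$ floor in $N_R(q)$.
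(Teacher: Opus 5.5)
Your $q\le8$ case is correct, but the construction for $q>8$ has a gap at its foundation: the classes are indexed by dimension phases. You index classes by time, $|V_t|\in(qe^{-k-1},qe^{-k}]$, reset each row's discrepancy at every class, and give class $k$ the threshold $\lambda_k=c(1+R)^{1/4}\theta_k$ with $\sum_k\theta_k\le1$. There are about $R$ such classes, so most $\lambda_k$ are $O((1+R)^{-3/4})$. A row's alive norm, however, need not shrink as the classes advance. Take $A=I$. The class-$k$ discrepancy of the row $e_j$ is the displacement of $x_j$ during that class, and blocking $e_j$ stops $x_j$. So a coordinate can freeze in class $k$ only if it starts the class within about $\lambda_k$ of $\pm1$. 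Unless a $(1-1/e)$ fraction of $V$ happens to start there, the blocked family reaches a constant fraction of $|V|$ before the class ends, and Theorem~A.4 no longer applies. Your own estimate $|V|\exp(-\lambda_k^2/\sigma_k^2)$ says the same thing: it is of order $|V|$ once $\lambda_k\ll\sigma_k$.

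The paper instead attaches classes to row restrictions. Class $k$ means alive squared mass in $(s_k/2,s_k]$ with $s_k=M2^{-k}$. Rows of alive mass above $M$ are blocked outright, coordinates with $|c(j)|>s_k/B_k$ are peeled at cost $2B_k$, and each row passes through each class at most once (\cref{lem:one-birth}). The summable thresholds therefore shrink with the rows they govern. $R$ enters the analysis only through $\log(q/h)\le R$ in the Jensen step over alive columns, not through the number of classes.

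Even with the right classes, the mechanism you describe gives the wrong exponent. A sub-Gaussian tail $\exp(-\lambda^2/\sigma^2)$ must beat the factor $q/h$, up to $e^R$, by which the alive dimension shrinks while dangerous rows stay blocked. That forces thresholds of order $\sqrt R$, i.e.\ a Banaszczyk-type bound. The paper's fourth root comes from four ingredients absent from your sketch:
\begin{itemize}
\item the regularized discrepancy $Z_c^\varepsilon=\varepsilon\ip{c}{x-u}+\beta_k(\cE_x(c)-\cE_u(c))$ with $\beta_k=B_k/(2s_k)$, whose drift is at most $-\theta_k\gamma^2(e_c^\varepsilon)^{\mathsf T}Qe_c^\varepsilon$ under coordinate spectral independence;
\item capped exponentials $\min\{e^{L_k},e^{\lambda_kZ}\}$ with $\lambda_k\le\theta_k$;
\item affine spectral independence for the signed moving gradients (not the rows), which controls the outer covariance term $r^{\mathsf T}Cr$;
\item the coefficient cap $2\beta_k|c(j)|\le1$ from peeling, which bounds the gradients and gives $\alpha_kq_a^{(j)}\le\rho_k\propto B_k^{-2}$.
\end{itemize}
Together these bound $|\cD_k|$ by $\delta_kh/8$ plus a constant times $h\,s_kL_k(L_k+R)/B_k^4$. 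The ``variance-of-variance'' heuristic does not replace this.

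Two further steps fail as stated. First, a potential $\sum_i\exp(\cdot)$ over rows starts at $m$, so it cannot count dangerous rows relative to $|V|$ independently of the number of rows. The paper weights each record at column $j$ by $c(j)^2/s_k$, so the historical birth mass is at most $2/s_k$ and $\Psi\le q$. Second, the theorem asks for a finite construction at positive mesh, so a limit $\gamma\to0$ is unavailable, and steps $\gamma\Sigma^{1/2}g$ with Rademacher $g$ can leave the cube. The paper uses unit atoms with $\norm{v}_\infty\le1$ and the buffer $|x(j)|<1-\gamma$. Its fourth-order remainder is bounded by $\gamma^4$ times the same weighted energy that governs the buffer's negative curvature of order $\gamma^3$.
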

\end{samepage}

The depth $R$ is prescribed before the trajectory is constructed. The
thresholds used during that trajectory remain fixed. Thus the quantifiers
in \cref{thm:partial} permit a different trajectory for each depth.

\begin{corollary}[Completion from a given point]\label{cor:completion}
Under the same matrix hypotheses, every $b\in[-1,1]^n$ with $q\ge1$
fractional coordinates admits a signing $\sigma$ preserving its existing
signs and satisfying
\begin{equation}\label{eq:completion}
 \norm{A(\sigma-b)}_\infty
 \le2395\left(1+\logplus\frac q9\right)^{1/4}+2\sqrt2.
\end{equation}
For $q\le8$, the bound $\sqrt q$ follows by direct Euclidean rounding;
for $q=0$, take $\sigma=b$.
\end{corollary}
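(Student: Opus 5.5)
We derive the corollary from \cref{thm:partial} in one application, followed by a Euclidean rounding of the few remaining fractional coordinates. The small cases come first. If $q=0$, then $b$ is already a signing and we take $\sigma=b$. If $1\le q\le8$, let $F$ be the set of fractional coordinates of $b$. Define $\sigma_j=b_j$ for $j\notin F$, and for $j\in F$ pick signs $\sigma_j\in\{-1,1\}$ greedily or at random with $\E\sigma_j=b_j$, independently. Then
\[
\E\norm{A(\sigma-b)}_2^2=\sum_{j\in F}(1-b_j^2)\norm{A_j}_2^2\le q,
\]
so some choice gives $\norm{A(\sigma-b)}_\infty\le\norm{A(\sigma-b)}_2\le\sqrt q$. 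Since $\sqrt q\le2\sqrt2$ and the right-hand side of \eqref{eq:completion} is at least $2\sqrt2$, this case is done.

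For $q>8$ we choose the depth $R=\logplus(q/9)=\log(q/9)$, which is positive. With this choice $qe^{-R}=9$, so
\[
N_R(q)=\max\{\lceil 9\rceil-1,8\}=8.
\]
\cref{thm:partial} then yields $y\in[-1,1]^n$ with three properties: $y_j=b_j$ whenever $|b_j|=1$; at most $8$ fractional coordinates; and $\norm{A(y-b)}_\infty\le2395(1+\logplus(q/9))^{1/4}$.

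Next we apply the rounding step above to $y$ in place of $b$. Its fractional set $F'$ has $|F'|\le8$, and we obtain a signing $\sigma$ that agrees with $y$ off $F'$ and satisfies $\norm{A(\sigma-y)}_\infty\le\sqrt{|F'|}\le2\sqrt2$. The signing preserves the existing signs of $b$: if $|b_j|=1$, then $y_j=b_j$ is not fractional, so $\sigma_j=y_j=b_j$. The triangle inequality
\[
\norm{A(\sigma-b)}_\infty\le\norm{A(\sigma-y)}_\infty+\norm{A(y-b)}_\infty
\]
then gives \eqref{eq:completion}.

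This argument has no real obstacle; all the work is in \cref{thm:partial}. The only points needing care are the arithmetic $N_R(q)=8$ at the exact choice $R=\log(q/9)$ (the ceiling of an integer, so there is no off-by-one issue) and checking that the rounding bound uses column norms at most one, which holds by hypothesis.
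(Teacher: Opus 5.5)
Your proof is correct and follows the paper's argument: for $q\ge9$, the depth $R=\log(q/9)$ gives $N_R(q)=8$ in \cref{thm:partial}, and Euclidean rounding of the remaining at most eight coordinates costs $2\sqrt2$; for $q\le8$, direct rounding gives $\sqrt q$. One trivial correction is that $R=\log(q/9)$ is only nonnegative, since it vanishes at $q=9$, but \cref{thm:partial} only requires $R\ge0$.
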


For $q\ge9$, taking $R=\log(q/9)$ in \cref{thm:partial} leaves at most
eight fractional coordinates. Euclidean rounding then costs at most
$\sqrt8=2\sqrt2$; see \cref{sec:completion}. Taking $b=0$ gives
\cref{thm:main}, hence an $O((1+\log n)^{1/4})$ bound independent of
the number of rows. The construction uses exact real covariance choices
and comparisons; its running time in a bit model is not estimated here.

\subsection{Relation to the Bansal--Jiang construction}
We follow the notation and regularized-discrepancy framework of
Bansal and Jiang's original paper~\cite{BJ25}. The walk guided by a semidefinite
program (SDP), the blocking of the current point, and the exact squared-norm
identity come from their original construction~\cite[Section~2.1]{BJ25}.
Their decoupling argument controls a sum of capped exponentials
using affine spectral independence and negative drift~\cite[Section~3.2.5]{BJ25}.
Their Beck--Fiala argument
uses adaptive size classes~\cite[Section~6]{BJ25}. These ideas underlie
the construction here. The covariance theorem we invoke is their original
Theorem~A.4; \cref{sec:covariance} derives the finite-row form, also stated
in~\cite[Theorem~2.12]{BJ26}.

Our contribution is the treatment of successive restrictions of arbitrary
real rows. Each restriction keeps its coefficients and birth point while
it is active. At retirement, its completed contribution is retained and
its surviving restriction is processed at smaller size classes. We adapt the capped-exponential sum to this history, with column weights,
and choose complete updates that preserve a single exponential potential bound. The resulting bound controls
the number of dangerous restrictions relative to the current number of
alive coordinates. Together with the covariance theorem, it ensures
feasibility throughout the selected trajectory.

In the Koml\'os construction of~\cite[Section~4.1]{BJ25}, also described
in the subsequent exposition~\cite[Section~3.2]{BJ26}, the input is
decomposed by entry-magnitude
scales. Here a size class records the squared mass of a current row
restriction. The coefficient cap imposed within that class permits a
weighted column estimate for all real entries. The thresholds over the
successive size classes are summable. We prove the required weighted
exponential estimate directly, including its finite-step remainder.
Section~\ref{sec:local} establishes this estimate for complete updates,
including births and retirements, and \cref{sec:trajectory} uses it to
construct the finite trajectory.

\subsection{Overview of the proof}\label{sec:overview}
The construction evolves the $q$ initially fractional coordinates. Write
$x_t$ for the current colouring, $\cV_t$ for its alive set, and
$n_t=|\cV_t|$. For a fixed mesh $\gamma>0$, a coordinate is alive when
$|x_t(j)|<1-\gamma$. Once a coordinate ceases to be alive, it is
held fixed until the final rounding to its sign. To \emph{block} a vector
means to require every supported direction to be orthogonal to it.
The update is
\[
 x_{t+1}=x_t+\gamma v_t,
 \qquad \norm{v_t}_2=1,\quad
 \supp v_t\subseteq\cV_t,\quad \ip{x_t}{v_t}=0.
\]
Blocking the current point gives
$\norm{x_{t+1}}_2^2=\norm{x_t}_2^2+\gamma^2$. Consequently, any rule
that chooses an admissible step at every nonterminal state reaches its
stopping target after finitely many steps. The issue is to
preserve the covariance budget while the alive set changes.
All expectations are taken under the
finite law chosen at the current state. We use that law to select a
successor with a suitable potential bound, and then choose a new law
at the selected state.

\paragraph{Successive restrictions and size classes.}
For $c\in\R^q$, put
$r_t(c)=\sum_{j\in\cV_t}c(j)^2$. Large rows are blocked, so they incur
zero discrepancy until their size is bounded by a fixed constant $M$.
For $s_k=M2^{-k}$, a class-$k$ restriction has current size between
$s_k/2$ and $s_k$. Given a target $B_k$, remove the coordinates with
$|c(j)|>s_k/B_k$. Their total $\ell_1$ norm is at most $B_k$, so
their future discrepancy costs at most $2B_k$. The retained coefficients
satisfy $\norm{c}_\infty\le s_k/B_k$.

The retained vector is held fixed until its alive squared mass falls to
at most $s_k/2$. At that point we retain its completed discrepancy and process
its surviving restriction at later classes. A row contributes at most
one peeled vector and one tracked interval at each class. Successive
restrictions may have common coordinates. The exact row identity in
\cref{sec:row-identity} adds their contributions over the corresponding
time intervals.

\paragraph{Regularization makes blocking compatible with negative drift.}
If a class-$k$ restriction $c$ is born at $u$, define, for each sign
$\varepsilon\in\{-1,1\}$,
\[
 Z_c^\varepsilon(x)=\varepsilon\ip{c}{x-u}
 +\beta_k\sum_jc(j)^2\bigl(u(j)^2-x(j)^2\bigr),
 \qquad \beta_k=\frac{B_k}{2s_k}.
\]
This is the Bansal--Jiang regularized discrepancy measured from the
birth point. Its moving gradient is
$e_c^\varepsilon(x)=\varepsilon c-2\beta_k c^{\odot2}\odot x$.
The exact increment along a direction $v$ is
\[
 \gamma\ip{e_c^\varepsilon(x)}{v}
 -\beta_k\gamma^2\sum_jc(j)^2v(j)^2.
\]
A centered direction law removes the linear term in expectation. If
$Z_c^\varepsilon$ reaches $B_k$ and the restriction remains active,
we block its moving gradient on subsequent steps, removing the linear
term for every supported direction. A simultaneous retirement ends the
record without adding a blocking constraint. The coefficient
cap bounds the gradient and the possible threshold overshoot.

\paragraph{Weighted exponential sums control dangerous restrictions.}
For each signed restriction, an exponential $X$ starts at one, follows
$\exp(\lambda_k Z_c^\varepsilon)$ while live, and is capped at
$e^{L_k}$, where $\lambda_kB_k=L_k$. After a threshold crossing or
retirement, retain its value. At column $j$, weight it by
$q_c^{(j)}=c(j)^2/s_k$. A row has at most one birth in a fixed class,
so the total historical birth mass at that column is at most $2/s_k$.
The factor two accounts for the two signs.

Let $\Phi^{(j,k)}$ denote the weighted exponential sum and let $I^{(j,k)}$ denote
total birth mass. The difference $G^{(j,k)}=\Phi^{(j,k)}-I^{(j,k)}$
is unchanged by a birth. Every current dangerous signed restriction
contributes its capped exponential and has alive normalized mass greater
than $1/2$. A bound on $G$ therefore bounds the number of such
restrictions, including after arbitrarily many earlier retirements.

The construction preserves
\[
 \Psi_t=\sum_{k,j}w_k\exp\bigl(G_t^{(j,k)}/K_k\bigr)\le q,
 \qquad \sum_{k\ge0}w_k=1.
\]
Jensen's inequality over the current alive columns gives a term
$\log(q/n_t)$. While a further step is required, the integer alive
count satisfies $n_t>N_R(q)$, hence
$n_t\ge\lceil qe^{-R}\rceil\ge qe^{-R}$. Thus $\log(q/n_t)\le R$.
The resulting dangerous-family count is small
enough for the covariance theorem. Its dependence on the number of
columns comes from this single aggregate potential.

\Cref{fig:restriction-history} shows how a threshold crossing and a
retirement affect different parts of the state.
\begin{figure}[tbp]
\centering
\begin{tikzpicture}[x=1cm,y=1cm,
  every node/.style={font=\small,inner sep=2pt,align=center},
  >={Stealth[length=4pt,width=4pt]}]
  \draw[line width=.7pt] (1,1.55) -- (10.8,1.55);
  \draw (1,1.45) -- (1,1.65) (10.8,1.45) -- (10.8,1.65);
  \node[above,text width=9.2cm] at (5.9,1.65)
    {Active restriction: $c$, $u$ and $k$ remain fixed};

  \draw[->] (1,0) -- (14.9,0);
  \foreach \x in {1,6.1,10.8}{\fill (\x,0) circle (1.5pt);}
  \node[above,text width=1.8cm] at (1,.15) {Birth $u$\\$X=1$};
  \node[above,text width=3.8cm] at (6.1,.15)
    {First crossing\\$Z_c^\varepsilon\ge B_k$};
  \node[above,text width=3.8cm] at (10.8,.15)
    {Retirement $z$\\$\sum_{j\in\cV_z}c(j)^2\le s_k/2$};

  \draw[densely dashed,gray] (6.1,-.15) -- (6.1,-2.1);
  \draw[densely dashed,gray] (10.8,-.15) -- (10.8,-2.1);
  \node at (3.55,-.60) {\textbf{Live}};
  \node at (8.45,-.60) {\textbf{Dangerous}};
  \node at (12.9,-.60) {\textbf{Retired}};
  \node at (3.55,-1.12) {exponential update};
  \node at (8.45,-1.12) {gradient blocked};
  \node at (12.9,-1.12) {$X$ retained};
  \node at (3.55,-1.72) {$X=e^{\lambda_k Z_c^\varepsilon}$};
  \node at (8.45,-1.72) {$X=e^{L_k}$};
  \node at (12.9,-1.72) {$X=e^{L_k}$};
  \draw[->] (4.75,-1.72) -- (7.2,-1.72);
  \draw[->] (9.5,-1.72) -- (11.55,-1.72);
\end{tikzpicture}
\vspace{3pt}
\[
 \ip{c}{y-u}
 =\underbrace{\ip{c}{z-u}}_{\text{completed interval}}
 +\underbrace{\ip{c\one_{\cV_z}}{y-z}}_{\text{surviving part, anchored at }z}.
\]
\caption{Restriction lifetime and retained history. The exponential $X$ is
shown for a signed record that crosses its threshold before retirement.
The dangerous mark persists even if $Z_c^\varepsilon$ later decreases.
The other sign has its own status; both retire together. A sign that
remains live receives its final exponential update before retirement.
In the identity, $\cV_z$ is
the alive set at $z$, and $y_j=z_j$ outside it. The contributions concern
successive time intervals even when their coefficient supports overlap.
The surviving part is normalized at later classes, with new anchor $z$.}
\label{fig:restriction-history}
\end{figure}

\Needspace{10\baselineskip}
\paragraph{The smooth estimate for a complete update.}
The update of the point determines which coordinates freeze, which
restrictions retire, and which new restrictions are born. The coefficients
of new restrictions can depend on the selected direction. Even so, the
increment of $G$ is bounded by a
smooth expression using only old live records. The initial exponential sum cancels the
birth mass, stopped exponentials remain fixed, and the last update of
a retiring live record is included before retirement. These identities
justify differentiating the old-state expression and then applying its
bound to every complete successor.

The second derivative of the outer exponential contains a covariance
term for the sum of the signed responses. Affine spectral independence
controls precisely that term. A symmetric direction law cancels odd
Taylor terms. The buffer at the cube boundary provides negative
curvature of order $\gamma$; its contribution to one step is of order
$\gamma^3$. The fourth-order remainder is bounded by $\gamma^4$ times
the same nonnegative weighted energy. This permits a uniformly small
positive mesh even at states where that energy is arbitrarily small.

\paragraph{The schedule explains the exponent.}
The column coefficient cap contributes a factor $B_k^{-2}$ to the
weighted covariance bound. The ratio of the exponential parameter to
the negative-drift parameter contributes another factor $B_k^{-2}$.
Thus the dangerous-family estimate is controlled by
$s_kL_k(L_k+R)/B_k^4$. We allocate a geometrically decreasing fraction
$\delta_k$ of the available blocking dimension to class $k$ and take
\[
 B_k\asymp
 \left(\frac{s_kL_k(L_k+R)}{\delta_k}\right)^{1/4}
 +\sqrt{s_kL_k}.
\]
The ratio $s_k/\delta_k$ decreases geometrically, whereas $L_k$ grows
linearly with $k$. Hence the thresholds sum to $O((1+R)^{1/4})$.
The row identity converts this sum into a discrepancy bound. A strict
numerical reserve pays for the finite-mesh overshoot and buffered rounding.

\subsection{Notation and formalization}
Rows of $A$ are $a_1,\ldots,a_m$, and $a_i(j)=A_{ij}$, following
\cite{BJ25}. For a vector $c$, $c^{\odot2}$ denotes its
coordinatewise square and $c\odot x$ denotes coordinatewise multiplication.
We write $h=|V|$ for the number of alive coordinates at a fixed state
and $W$ for the blocking subspace, following the SDP notation
of~\cite{BJ25}. For a symmetric matrix $U$, $\diag U$ is the diagonal matrix with the
same diagonal. The relation $U\preceq V$ is the positive-semidefinite
order. In our restatement of Theorem~A.4, $\eta_0$ is the coordinate
parameter of~\cite{BJ25}, so $U\preceq\eta_0^{-1}\diag U$.
Elsewhere we abbreviate its reciprocal by $\eta=\eta_0^{-1}$.
Two signs are tracked for each restriction;
the input matrix itself always retains its original column normalization.
Our step $x^+=x+\gamma v$ has $dt=\gamma^2$ in the time-step
notation of~\cite{BJ25}. We write $Q=U/\Tr U$ for the covariance of
the unit direction law. Thus $\E[(x^+-x)(x^+-x)^{\mathsf T}]
=\gamma^2Q$, as in the normalized update of~\cite[Section~2.1]{BJ25}.

We formalize the partial- and full-colouring theorems in Lean, including
the historical trajectory, the exact threshold sum and final rounding.
Bansal--Jiang Theorem~A.4 is the sole external research theorem
assumption. \Cref{app:formalization} gives the formal statements and
their correspondence with the written proof.

\section{SDP feasibility and the direction law}\label{sec:covariance}
For a centered random vector $v$ with covariance $Q$, the inequality
$Q\preceq\eta\diag Q$ says that
\[
 \E\ip{a}{v}^2\le\eta\sum_j a(j)^2\E v(j)^2
 \qquad(a\in\R^h).
\]
For a matrix $E$ of row responses, the inequality
$EQE^{\mathsf T}\preceq\alpha\diag(EQE^{\mathsf T})$
gives the analogous bound for all linear combinations of those responses:
\[
 \E\left(\sum_a z_a\ip{E(a,\cdot)}{v}\right)^2
 \le\alpha\sum_a z_a^2\E\ip{E(a,\cdot)}{v}^2.
\]
These are the spectral independence and affine spectral independence
conditions used by Bansal and Jiang. Later, $z_a$ will be the weighted
exponential of a live signed restriction, so this inequality controls the
second moment of their total response.

\begin{theorem}[Bansal--Jiang, Theorem A.4 of~\cite{BJ25}]
\label{thm:source}
Let $h\ge1$, let $W\subseteq\R^h$ have dimension $dh$, and let
$\widetilde E_s$ have $r_sh$ rows with $r_s\ge1$, for finitely many
indices $s$. Suppose $0<\eta_0,\kappa,\eta_s<1$ and
\[
 d+\eta_0+\kappa+\sum_s\eta_s\le1.
\]
There is a positive-semidefinite matrix $U$ such that
\begin{gather*}
 w^{\mathsf T}Uw=0\quad(w\in W),\qquad U(j,j)\le1,\qquad
 \Tr U\ge\kappa h,\qquad U\preceq\eta_0^{-1}\diag U,\\
 \widetilde E_sU\widetilde E_s^{\mathsf T}
 \preceq\frac{r_s}{\eta_s}
       \diag(\widetilde E_sU\widetilde E_s^{\mathsf T})
 \qquad\text{for every }s.
\end{gather*}
\end{theorem}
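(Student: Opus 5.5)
This is Bansal--Jiang's Theorem~A.4, which the paper uses as an external result, so I would not reprove its full content. Instead I sketch the standard argument behind it, which is the SDP duality approach of Bansal and Jiang. The plan is to reduce all the spectral constraints to a single SDP feasibility problem and certify feasibility via duality.

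\textbf{Step 1: set up the SDP and its dual.} Consider the SDP in the variable $U\succeq0$ with the constraints listed in the statement:
\begin{itemize}
\item $U$ vanishes on $W$, which I would impose as $\Pi_WU\Pi_W=0$, or equivalently $U=P U P$ with $P$ the projection onto $W^\perp$;
\item $U(j,j)\le1$ and $\Tr U\ge\kappa h$;
\item the linear matrix inequalities $\eta_0^{-1}\diag U-U\succeq0$ and $(r_s/\eta_s)\diag(\widetilde E_sU\widetilde E_s^{\mathsf T})-\widetilde E_sU\widetilde E_s^{\mathsf T}\succeq0$.
\end{itemize}
All constraints are linear in $U$ and the feasible region is compact, so strong duality holds; a Slater-type perturbation handles the strict-feasibility issues. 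Infeasibility would therefore yield dual multipliers:
\begin{itemize}
\item a diagonal $D\succeq0$ for the box constraints, and a scalar $\mu\ge0$ for the trace;
\item $Y_0\succeq0$ and $Y_s\succeq0$ for the spectral constraints;
\item a matrix supported on $W$ for the blocking constraint.
\end{itemize}
These combine into an inequality of the form $\mu\kappa h>\Tr D$ together with
\[
\mu I\preceq D+(\text{$W$-part})+\bigl(Y_0-\eta_0^{-1}\diag Y_0\bigr)+\sum_s\widetilde E_s^{\mathsf T}\bigl(Y_s-\tfrac{r_s}{\eta_s}\diag Y_s\bigr)\widetilde E_s
\]
holding on $W^\perp$.

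\textbf{Step 2: bound the rank of each negative part.} The central counting is a dimension count over the eigenvalues of the right-hand side. The term $Y-\alpha\diag Y$, for $Y\succeq0$ of size $N$, has at most $N/\alpha$ eigenvalues that are large relative to its diagonal. This is the spectral content of ``$\Tr(Y)\ge\lambda_{\max}$ times the count'' after normalizing $\diag Y$ to the identity. Applying it gives:
\begin{itemize}
\item $\eta_0 h$ directions from $Y_0$;
\item at most $(r_sh)/(r_s/\eta_s)=\eta_sh$ directions from each $Y_s$, which explains exactly why the factor $r_s$ appears;
\item $dh$ directions from $W$.
\end{itemize}
After these, a subspace of dimension at least $(1-d-\eta_0-\sum_s\eta_s)h\ge\kappa h$ remains on which these terms are controlled. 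Taking the trace of the dual inequality restricted to that subspace forces $\mu\kappa h\le\Tr D$, which contradicts the dual certificate.

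\textbf{Main obstacle.} The hardest step is Step~2: the terms are not simply low-rank, and the $\diag$ parts interact with $D$. The approach I would try first is the Bansal--Jiang averaging argument. Rescale so that $\diag Y_s$ is normalized, then use the trace inequality $\Tr(P(Y-\alpha\diag Y))\le\Tr(Y)-\alpha\Tr(P\diag Y)$ on a well-chosen projection $P$. The projection would be obtained by iteratively discarding high-eigenvalue directions, with the size budget $d+\eta_0+\kappa+\sum\eta_s\le1$ ensuring the iteration terminates with enough dimension left. Since the paper takes Theorem~A.4 as its sole external assumption, in the write-up I would cite \cite{BJ25} for these details rather than reproduce them.
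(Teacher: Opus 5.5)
You are right that this statement is not proved in the paper. It is quoted from \cite{BJ25} and used as the single external hypothesis, also in the Lean development, so citing it matches the paper's treatment. Your sketch is therefore a genuinely different, additional route. It is also closer to complete than your last paragraph suggests: the obstacle you describe there does not arise.

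The count in Step~2 is an inertia statement, settled by Sylvester's law. Take $Y\succeq0$ of size $N$ and $J=\{j:Y_{jj}>0\}$. The matrix $Y-\alpha\diag Y$ vanishes outside $J\times J$. On $J$ it is congruent, via $\Delta=\diag(Y_J)^{-1/2}$, to $\widetilde Y-\alpha I$, where $\widetilde Y=\Delta Y_J\Delta\succeq0$ has unit diagonal. Hence it has at most $\Tr\widetilde Y/\alpha\le N/\alpha$ positive eigenvalues.

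Now put $M_0=Y_0-\eta_0^{-1}\diag Y_0$ and $M_s=Y_s-(r_s/\eta_s)\diag Y_s$. Let $S$ be the set of $x\in W^\perp$ orthogonal to the positive eigenvectors of $M_0$ and such that, for every $s$, $\widetilde E_sx$ is orthogonal to the positive eigenvectors of $M_s$. These are at most $dh+\eta_0h+\sum_s\eta_sh$ linear conditions, so $\dim S\ge\kappa h$. On $S$ the $W$-term vanishes, and the $M_0$ and $\widetilde E_s^{\mathsf T}M_s\widetilde E_s$ terms are negative semidefinite. Compress the dual inequality by the orthogonal projection $P_S$ and take traces. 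Since $0\le(P_S)_{jj}\le1$, this gives $\mu\kappa h\le\mu\dim S\le\Tr(P_SD)\le\Tr D$, which is the desired contradiction. So you need no low-rank structure, no interaction between the diagonal terms and $D$, and no iterative discarding of directions.

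For the duality step, the cleanest justification is to maximize $\Tr U$ without the trace constraint. The dual is strictly feasible: take $D=cI$ with $c$ large and $Y_0,Y_s$ small multiples of the identity. Hence there is no duality gap, and a maximum below $\kappa h$ yields multipliers with $\mu=1$ and $\Tr D<\kappa h$. This avoids any weak-infeasibility issue with an SDP Farkas lemma.

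Completed this way, your route makes the argument self-contained for this step, whereas the paper defers all of it to \cite{BJ25}.
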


The row ratios in this statement are at least one. The affine blocks
arising from a current size class may have fewer rows than alive
coordinates. Repeating rows within each block gives the following form.

\begin{corollary}[Finite-row covariance theorem]\label{cor:finite-row}
Let $h\ge1$ and let $W\subseteq\R^h$ satisfy $\dim W\le\Delta h$. Let
$E_s\in\R^{m_s\times h}$ be arbitrary finite matrices, let
$\eta>1$, $\kappa>0$, and $\alpha_s>0$, and suppose
\begin{equation}\label{eq:sdp-budget}
 \eta^{-1}+\kappa+\sum_s\frac{m_s}{\alpha_sh}<1-\Delta.
\end{equation}
Then there is $U\succeq0$ such that
\begin{align}
 UW&=0,& U(j,j)&\le1,& \Tr U&\ge\kappa h,
 &U&\preceq\eta\diag U,\label{eq:sdp-coordinate}\\
 E_sUE_s^{\mathsf T}&\preceq\alpha_s\diag(E_sUE_s^{\mathsf T})
 &&\text{for every }s.\label{eq:sdp-affine}
\end{align}
Here $UW=0$ means that $Uw=0$ for every $w\in W$.
\end{corollary}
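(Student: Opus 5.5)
The plan is to derive \cref{cor:finite-row} from \cref{thm:source} by padding each $E_s$ with repeated rows, choosing the parameters so the budget of \cref{thm:source} holds, and then translating each conclusion back.

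\textbf{Row padding.} Fix $s$. If $m_s=0$, drop the block, since \eqref{eq:sdp-affine} is vacuous. Otherwise put $r_s=\lceil m_s/h\rceil\ge1$. Form $\widetilde E_s\in\R^{r_sh\times h}$ by listing the rows of $E_s$ and then repeating them until there are $r_sh$ rows; each original row appears at least once. Every row of $\widetilde E_sU\widetilde E_s^{\mathsf T}$ is then a row of $E_sUE_s^{\mathsf T}$.

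\textbf{Transferring the affine bound.} Suppose $\widetilde E_sU\widetilde E_s^{\mathsf T}\preceq\beta\diag(\widetilde E_sU\widetilde E_s^{\mathsf T})$. Take $z\in\R^{m_s}$ and embed it in $\R^{r_sh}$ by placing $z_a$ at one chosen copy of row $a$ and zero at every other copy. The quadratic forms of $\widetilde E_sU\widetilde E_s^{\mathsf T}$ and of its diagonal evaluated at this vector equal those of $E_sUE_s^{\mathsf T}$ and its diagonal evaluated at $z$. Hence $E_sUE_s^{\mathsf T}\preceq\beta\diag(E_sUE_s^{\mathsf T})$. Applying \cref{thm:source} gives $\beta=r_s/\eta_s$, so it suffices to arrange $r_s/\eta_s\le\alpha_s$, that is, $\eta_s\ge r_s/\alpha_s$.

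\textbf{Choosing parameters; this is the main obstacle.} The naive choice $\eta_s=r_s/\alpha_s$ spends $\lceil m_s/h\rceil/\alpha_s$ of the budget. That can exceed the $m_s/(\alpha_sh)$ allowed by \eqref{eq:sdp-budget} when $m_s<h$. To avoid this, set $\beta_s=\min\{\alpha_s,\,h\alpha_s/m_s\}$ and exploit monotonicity.

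\begin{itemize}
\item If $m_s\le h$, then $r_s=1$. Choose $\eta_s=m_s/(\alpha_sh)$; if this is $\ge1$, then \eqref{eq:sdp-budget} already fails, so $\eta_s<1$ automatically. \Cref{thm:source} gives the bound $r_s/\eta_s=h\alpha_s/m_s$, which is weaker than $\alpha_s$.
\item The fix for that case is to repeat only to $h$ rows but weight differently. Alternatively, pad $E_s$ with \emph{zero} rows instead of repetitions: zero rows contribute nothing to either side, so the transfer above still works. Then the number of genuine rows is $m_s$, but the theorem's ratio remains $r_s=1$.
\end{itemize}

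So the honest difficulty is whether Theorem~A.4 tolerates fewer rows than $h$ at the cost $m_s/(\alpha_s h)$. I would check the proof structure of~\cite{BJ25}, where the cost of a block is its row count divided by $h$ times $1/\alpha$. I expect the statement to be applicable by repeating the rows $\lceil\alpha_s h/m_s\rceil$-fold across $h$ rows while scaling $\eta_s$ accordingly. If that fails, I would accept the ceiling loss, which is absorbed by the strict inequality in \eqref{eq:sdp-budget} in the intended applications.

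\textbf{Remaining conditions.} Set $\eta_0=\eta^{-1}\in(0,1)$. Let $d=\dim W/h\le\Delta$; if $d=0$, enlarge $W$ trivially or use a tiny positive $d$ with any line. Take $\kappa$ as given and shrink it if needed to keep $\kappa<1$. The strict inequality \eqref{eq:sdp-budget} gives $d+\eta_0+\kappa+\sum_s\eta_s\le1$. The conclusion $w^{\mathsf T}Uw=0$ together with $U\succeq0$ gives $Uw=0$. The conditions $U(j,j)\le1$, $\Tr U\ge\kappa h$ and $U\preceq\eta\diag U$ transfer directly.
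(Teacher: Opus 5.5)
Your proposal has a genuine gap at the point you flag as the main obstacle, and the fallback you offer does not prove the corollary as stated. Testing the padded inequality on a vector supported on one copy of each row transfers the coefficient $r_s/\eta_s$ unchanged. With integer $r_s=\lceil m_s/h\rceil$ you therefore pay $\lceil m_s/h\rceil/\alpha_s$ per block, not $m_s/(\alpha_sh)$.

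This loss is not absorbed in the intended application. In \cref{prop:feasibility} every nonempty class would cost at least $1/\alpha_k=100s_k/1973$. Class $0$ alone would cost $6000/13811$, which exceeds the entire affine allowance $400/1973=2800/13811$, while the slack in \eqref{eq:exact-sdp-slack} is below $10^{-4}$. Your guessed $\lceil\alpha_sh/m_s\rceil$-fold repetition is also not the right count.

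Two observations close the gap.

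\emph{First}, \cref{thm:source} only requires $\widetilde E_s$ to have $r_sh$ rows with $r_s\ge1$; $r_s$ need not be an integer. For $m_s\ge h$, take $\widetilde E_s=E_s$, $r_s=m_s/h$ and $\eta_s=m_s/(\alpha_sh)$. This gives $r_s/\eta_s=\alpha_s$ directly, with no padding.

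\emph{Second}, for $m_s<h$, stack $t_s=\lceil h/m_s\rceil$ full copies of $E_s$. Then $r_s=t_sm_s/h\ge1$. Keep $\eta_s=m_s/(\alpha_sh)$ and accept the inflated coefficient $r_s/\eta_s=t_s\alpha_s$. Now test on the spread vector $(z,\ldots,z)$ rather than a one-copy vector. With $M_s=E_sUE_s^{\mathsf T}$, the stacked quadratic form is $t_s^2z^{\mathsf T}M_sz$, while its diagonal contributes only $t_sz^{\mathsf T}(\diag M_s)z$. Hence
\[
 t_s^2z^{\mathsf T}M_sz\le t_s\alpha_s\cdot t_sz^{\mathsf T}(\diag M_s)z ,
\]
and the repetition factor cancels exactly, leaving $\alpha_s$ at cost exactly $m_s/(\alpha_sh)$. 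This is the paper's argument; it uses $t_s=\lceil h/m_s\rceil$ uniformly, which equals $1$ when $m_s\ge h$.

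The remaining conditions transfer as you say. Two of your adjustments should be dropped. Shrinking $\kappa$ would weaken $\Tr U\ge\kappa h$, and it is unnecessary because the budget already forces $\kappa<1-\Delta\le1$. Enlarging $W$ by a line when $\dim W=0$ costs $1/h$, which need not fit in the slack; the paper simply takes $d=\dim W/h$, since \cref{thm:source} as stated does not require $d>0$.
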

\begin{proof}
Omit empty blocks. For each remaining block, put
\[
 t_s=\left\lceil\frac h{m_s}\right\rceil,\qquad
 r_s=\frac{t_sm_s}{h},\qquad
 \eta_s=\frac{m_s}{\alpha_sh},\qquad \eta_0=\eta^{-1}.
\]
Stack $t_s$ copies of $E_s$ into $\widetilde E_s$.
The strict budget puts all the source parameters in $(0,1)$ and permits
\cref{thm:source} with $d=\dim W/h$. Its affine coefficient is
$r_s/\eta_s=t_s\alpha_s$. Set $M_s=E_sUE_s^{\mathsf T}$ and test
that inequality on the block vector $(z,\ldots,z)$. The result is
\[
 t_s^2z^{\mathsf T}M_sz
 \le t_s^2\alpha_s z^{\mathsf T}(\diag M_s)z.
\]
Cancellation gives \eqref{eq:sdp-affine}. Positive semidefiniteness gives
$w^{\mathsf T}Uw=\norm{U^{1/2}w}_2^2$, so its vanishing implies
$Uw=0$.
\end{proof}

This reduction keeps the affine matrices separate. Its charge for a block
is exactly $m_s/(\alpha_sh)$. The finite-row formulation also appears
directly in~\cite[Theorem~2.12]{BJ26}, where arbitrary positive real row
ratios are allowed.

The coordinate and affine inequalities are homogeneous in $U$, so they
also hold for $Q=U/\Tr U$. The positive trace makes this normalization
well defined. The following spectral construction realizes $Q$ as the covariance
of a finite law. An annihilated vector is orthogonal to every atom, so
the blocking constraints hold for each possible successor.

\begin{lemma}[A symmetric unit direction law]\label{lem:direction-law}
Let $U\succeq0$ be nonzero and put $Q=U/\Tr U$. There is a finite
centrally symmetric law on $2\rank U$ positive-weight atoms $v$ such that
\begin{equation}\label{eq:direction-law}
 \E v=0,\qquad \E vv^{\mathsf T}=Q,\qquad
 \norm v_2=1,\qquad \norm v_\infty\le1.
\end{equation}
Every atom is in $\range U$, so it is orthogonal to every vector
annihilated by $U$.
\end{lemma}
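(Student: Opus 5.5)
The plan is to read the law off a spectral decomposition of $Q$. No earlier result is needed: the statement is linear algebra about a single positive-semidefinite matrix, and the key observation is that the trace normalization makes the eigenvalues of $Q$ a probability vector.

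First, since $U\succeq0$ is nonzero, $\Tr U>0$, so $Q=U/\Tr U$ is well defined, positive semidefinite, of rank $\rho=\rank U\ge1$, and satisfies $\Tr Q=1$. By the spectral theorem,
\[
 Q=\sum_{i=1}^{\rho}\lambda_i e_ie_i^{\mathsf T},
\]
where $e_1,\ldots,e_\rho$ are orthonormal eigenvectors for the positive eigenvalues $\lambda_i>0$. Taking the trace gives $\sum_i\lambda_i=\Tr Q=1$.

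Next, define the law to place mass $\lambda_i/2$ on each of the two atoms $e_i$ and $-e_i$, for $i=1,\ldots,\rho$.
\begin{itemize}
\item The weights are positive and sum to $\sum_i\lambda_i=1$.
\item The atoms are $2\rho$ distinct vectors: the $e_i$ are orthonormal, so $e_i\ne\pm e_j$ for $i\ne j$, and $e_i\ne-e_i$ because $e_i\ne0$.
\item The law is centrally symmetric by construction, so $\E v=0$.
\item Each pair contributes $\tfrac{\lambda_i}2\bigl(e_ie_i^{\mathsf T}+(-e_i)(-e_i)^{\mathsf T}\bigr)=\lambda_ie_ie_i^{\mathsf T}$, hence $\E vv^{\mathsf T}=Q$.
\item Every atom is a unit vector, and $\norm v_\infty\le\norm v_2=1$.
\end{itemize}
These are exactly the conditions in \eqref{eq:direction-law}.

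Finally, for the range claim: each $e_i$ is an eigenvector of $U$ with eigenvalue $\lambda_i\Tr U>0$, so $e_i=U\bigl(e_i/(\lambda_i\Tr U)\bigr)\in\range U$, and the same holds for $-e_i$. If $Uw=0$, then for any atom $v=Uy$, symmetry of $U$ gives $\ip{w}{v}=\ip{Uw}{y}=0$. Equivalently, $\range U=(\ker U)^{\perp}$ for symmetric $U$.

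I do not expect a real obstacle. The only points needing care are the distinctness of the $2\rank U$ atoms and the use of symmetry of $U$ in the orthogonality step.
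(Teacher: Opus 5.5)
Your proof is correct and is essentially the paper's argument: the paper takes the positive spectral decomposition $U=\sum_\ell\mu_\ell p_\ell p_\ell^{\mathsf T}$, puts mass $\mu_\ell/(2\Tr U)$ on each of $\pm p_\ell$ (your law with $\lambda_\ell=\mu_\ell/\Tr U$), and gets orthogonality from the fact that kernel and range of a symmetric positive-semidefinite matrix are orthogonal. Your explicit check that the $2\rank U$ atoms are distinct is a small detail the paper leaves implicit.
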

\begin{proof}
Write the positive spectral decomposition
$U=\sum_{\ell=1}^{\rank U}\mu_\ell p_\ell p_\ell^{\mathsf T}$,
with orthonormal $p_\ell$ and $\mu_\ell>0$. Give each of
$p_\ell,-p_\ell$ probability $\mu_\ell/(2\Tr U)$. The two signs
cancel in the mean, and their covariance contributions sum to $Q$.
Each atom is a unit vector in the range of $U$. The kernel and range of a
symmetric positive-semidefinite matrix are orthogonal.
\end{proof}

The probability law is used for a finite average at the current state.
The trajectory will choose one atom whose complete successor has potential
at most that average. This choice is deterministic once an ordering of
the finite support and a feasible covariance have been fixed.

\begin{lemma}[Cube preservation and the squared-norm identity]\label{lem:clock}
Let $0<\gamma<1$, $x\in[-1,1]^q$, and
$V=\{j:|x(j)|<1-\gamma\}$. If $v$ is supported on $V$,
$\norm v_2=1$, and $\ip{x}{v}=0$, then $x^+=x+\gamma v$ belongs
to the cube, agrees with $x$ outside $V$, and satisfies
\begin{equation}\label{eq:clock}
 \norm{x^+}_2^2=\norm x_2^2+\gamma^2.
\end{equation}
Its alive set is a subset of $V$.
\end{lemma}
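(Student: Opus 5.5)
The plan is to check the three assertions of the lemma coordinatewise, treating coordinates outside $V$ and inside $V$ separately, and then to obtain the norm identity from the orthogonality $\ip{x}{v}=0$.

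\emph{Coordinates outside $V$.} For $j\notin V$, the support hypothesis gives $v(j)=0$. Hence $x^+(j)=x(j)\in[-1,1]$, and $|x^+(j)|=|x(j)|\ge1-\gamma$. This shows that $x^+$ agrees with $x$ outside $V$. It also shows that no such coordinate can belong to the alive set $\{j:|x^+(j)|<1-\gamma\}$ of $x^+$. Hence that alive set is contained in $V$.

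\emph{Coordinates inside $V$.} For $j\in V$, we have $|x(j)|<1-\gamma$. Since $\norm v_2=1$, also $|v(j)|\le\norm v_\infty\le\norm v_2=1$. Therefore
\[
 |x^+(j)|\le|x(j)|+\gamma|v(j)|<1-\gamma+\gamma=1,
\]
so $x^+\in[-1,1]^q$; in fact these coordinates stay strictly inside the cube.

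\emph{Squared-norm identity.} Expanding the square,
\[
 \norm{x^+}_2^2=\norm x_2^2+2\gamma\ip{x}{v}+\gamma^2\norm v_2^2=\norm x_2^2+\gamma^2,
\]
using $\ip{x}{v}=0$ and $\norm v_2=1$. This is \eqref{eq:clock}.

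There is no real obstacle here. The only point needing care is that the buffer $1-\gamma$ in the definition of $V$ absorbs a full step of length $\gamma$. This works because a unit Euclidean vector has every coordinate of absolute value at most one, so the step moves each alive coordinate by at most $\gamma$.
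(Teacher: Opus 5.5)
Your proof is correct and follows essentially the same route as the paper. Coordinates in $V$ move by at most $\gamma$ because $|v(j)|\le\norm v_2=1$. Coordinates outside $V$ are unchanged and so stay frozen. The squared-norm identity comes from expanding the square and using $\ip{x}{v}=0$.
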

\begin{proof}
For $j\in V$, $|x^+(j)|\le|x(j)|+\gamma|v(j)|<1$, since
$|v(j)|\le1$. The other coordinates do not change. Expanding the squared
norm proves \eqref{eq:clock}. Every previously frozen coordinate retains
its value and hence remains frozen.
\end{proof}

\section{Successive restrictions of a real row}\label{sec:restrictions}
Fix an initial point $b$ with $q>8$ fractional coordinates and restrict
$A$ to those columns. Throughout the construction, $A$ has $q$ columns,
$b\in(-1,1)^q$, and discrepancy means the increment from $b$.
The original fixed signs will be restored at the end. Fix a positive mesh
$\gamma<1/2$, small enough that all coordinates of $b$ are initially
alive. Its final choice is made in \cref{sec:finite-attainment}.

For a state $x$, write
\[
 V=\{j:|x(j)|<1-\gamma\},\qquad h=|V|,\qquad
 r_x(c)=\sum_{j\in V}c(j)^2.
\]
The quantity $r_x(c)$ is the \emph{squared $\ell_2$ mass on alive coordinates},
called its \emph{size} below.
For a fixed vector $c$, it can decrease only when coordinates freeze.
The \emph{energy}
\begin{equation}\label{eq:energy}
 \cE_x(c)=\sum_jc(j)^2(1-x(j)^2)
\end{equation}
also depends on the positions of the coordinates within the cube. Its
increment contains the negative quadratic term in regularized discrepancy.
We denote energy by $\cE$ to distinguish it from the centered
exponential sum $G^{(j,k)}$ introduced in \cref{sec:capital}.

Set
\begin{equation}\label{eq:fixed-parameters}
 M=\frac{60}{7},\qquad s_k=M2^{-k},\qquad
 \eta=\frac{61}{18},\qquad \kappa=\frac1{65536},\qquad
 \alpha_k=\frac{1973}{100s_k}.
\end{equation}
Here $M$ is the large-row cutoff, $\eta$ is the coordinate spectral
independence coefficient, $\alpha_k$ is its affine counterpart in class
$k$, and $\kappa$ is the trace fraction reserved in the covariance theorem.
Their numerical values enter the budget in \cref{sec:schedule}.
Fix a finite cutoff $K$ satisfying $s_{K+1}\le1/q$. For example,
$K=q+3$ suffices, because $q\le2^q$ and $M<16$.
The construction uses classes $0,\ldots,K$. A residual of squared norm
at most $s_{K+1}$ is terminal. Its $\ell_1$ norm is at most one.

For now fix positive class thresholds $B_0,\ldots,B_K$. Their values,
and the positive exponential parameters $L_k$, will be chosen in
\cref{sec:schedule}. The geometric statements of this section hold for
any positive thresholds.

\subsection{Peeling and normalization}\label{sec:normalization}
A row $a_i$ is \emph{large} at $x$ while $r_x(a_i)>M$. Its alive
restriction is blocked at every such state. The column norm hypothesis
gives
\begin{equation}\label{eq:column-size}
 \sum_i r_x(a_i)=\sum_{j\in V}\sum_i a_i(j)^2\le h,
\end{equation}
so there are at most $h/M$ large rows. Once a row has size at most $M$,
we process its restriction to the alive set.

The coefficient cutoff at size $s$ and target $B$ is $s/B$. This choice
allows the removed part to be bounded in $\ell_1$, while ensuring the
pointwise gradient bound required for the retained part.

\begin{lemma}[Peeling]\label{lem:peeling}
Let $s,B>0$, and let $v$ be supported on $V$, with $\norm v_2^2\le s$.
Put
\[
 T=\{j\in V:|v(j)|>s/B\},\qquad
 t=v\one_T,\qquad c=v\one_{V\setminus T}.
\]
Then $v=t+c$ and
\begin{equation}\label{eq:peeling-bounds}
 \norm t_1\le B,\qquad \norm c_\infty\le s/B,
 \qquad \norm c_2^2\le s.
\end{equation}
If the peel is performed at $u\in[-1,1]^q$, then
$|\ip{t}{y-u}|\le2B$ for every $y\in[-1,1]^q$.
\end{lemma}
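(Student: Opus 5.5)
The plan is to verify the four assertions of \cref{lem:peeling} in turn; each reduces to a one-line estimate.

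\emph{The decomposition.} Since $v$ is supported on $V$, and $T$ and $V\setminus T$ partition $V$, we get $v=v\one_T+v\one_{V\setminus T}=t+c$. Because $c$ is the restriction of $v$ to a subset of coordinates, $\norm c_2^2\le\norm v_2^2\le s$. Every coordinate in the support of $c$ lies in $V\setminus T$, where by definition $|v(j)|\le s/B$; this gives $\norm c_\infty\le s/B$.

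\emph{The $\ell_1$ bound, which is the only step with content.} For $j\in T$ we have $|v(j)|>s/B$. Hence
\[
 |v(j)|=\frac{v(j)^2}{|v(j)|}<\frac{B}{s}\,v(j)^2 .
\]
Summing over $T$ (an empty sum if $T=\emptyset$) yields
\[
 \norm t_1\le\frac Bs\sum_{j\in T}v(j)^2\le\frac Bs\norm v_2^2\le B .
\]
This is just Markov's inequality in the form ``large entries carry little $\ell_1$ mass relative to $\ell_2^2$''.

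\emph{The discrepancy bound.} For $y,u\in[-1,1]^q$, every coordinate satisfies $|y(j)-u(j)|\le2$. By Hölder's inequality,
\[
 |\ip{t}{y-u}|\le\norm t_1\norm{y-u}_\infty\le2B .
\]

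No step is a real obstacle. The only care needed is to make sure the strict inequality defining $T$ is what makes the $\ell_1$ estimate work. The non-strict cap then lands exactly on the retained part $c$.
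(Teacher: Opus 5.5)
Your proof is correct and is the paper's argument: sum the pointwise bound $|v(j)|\le(B/s)v(j)^2$ over $T$ to get $\norm t_1\le B$, obtain the bounds on $c$ by restriction and the definition of $T$, and apply H\"older with $\norm{y-u}_\infty\le2$ for the last assertion. One small correction to your closing remark: the strict inequality in $T$ is not what makes the $\ell_1$ estimate work, because $|v(j)|\ge s/B$ already gives $|v(j)|\le(B/s)v(j)^2$; strictness only decides which side the boundary coordinates go to, and both bounds hold under either convention.
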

\begin{proof}
For $j\in T$, $|v(j)|\le(B/s)v(j)^2$. Summing gives
$\norm t_1\le(B/s)\norm v_2^2\le B$. The retained bounds follow
from restriction and the definition of $T$. The last assertion uses
$\norm{y-u}_\infty\le2$.
\end{proof}

Starting with a residual $v$ of size at most $s_p$, where
$p\in\{0,\ldots,K+1\}$, normalize it as follows. At class $k\le K$, skip to $k+1$
if its size is at most $s_{k+1}$. Otherwise peel at the cutoff
$s_k/B_k$. If the retained vector has size greater than $s_{k+1}$,
keep it as the active restriction of class $k$. If its size is at most
$s_{k+1}$, continue with that vector at class $k+1$. Reaching $K+1$
returns a terminal residual.

\begin{lemma}[Finite normalization]\label{lem:normalization}
This procedure examines at most $K+1-p$ classes. The peeled vectors and
the returned residual sum to the input. Their nonzero coordinates retain
the original input coefficients. Peel labels increase strictly, and an
active output of class $k$ satisfies
\begin{equation}\label{eq:restriction-birth}
 s_k/2<\norm c_2^2\le s_k,\qquad
 \norm c_\infty\le s_k/B_k.
\end{equation}
A terminal output has squared norm at most $s_{K+1}$.
\end{lemma}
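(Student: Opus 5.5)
The plan is to prove \cref{lem:normalization} by induction on the class index, that is, on the number $K+1-p$ of classes still available. We track the current vector $w$ and class $k$ through the loop. The invariant is that, at each stage, the input equals the sum of the peeled vectors produced so far plus $w$. In addition, $w$ should have squared norm at most $s_k$ and be a coordinate restriction of the input, meaning that each nonzero coordinate of $w$ equals the corresponding input coefficient.

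\emph{Initialization.} At the start $w=v$ and $k=p$. The invariant holds because $\norm v_2^2\le s_p$ and there are no peeled vectors yet.

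\emph{One step at a class $k\le K$.} There are three cases.
\begin{itemize}
\item If $\norm w_2^2\le s_{k+1}$, we skip to $k+1$. The vector $w$ is unchanged, so the invariant holds at class $k+1$.
\item Otherwise we apply \cref{lem:peeling} with $s=s_k$ and $B=B_k$. Its hypothesis $\norm w_2^2\le s_k$ is the invariant. It gives $w=t+c$, where $t=w\one_T$ and $c=w\one_{V\setminus T}$. Both are restrictions of $w$ to disjoint sets, hence restrictions of the input, so coefficients are retained. The peel receives label $k$, and the sum identity is preserved.
\item After peeling, if $\norm c_2^2>s_{k+1}=s_k/2$, then $c$ is output as the active restriction. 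Here $\norm c_2^2\le s_k$ and $\norm c_\infty\le s_k/B_k$ by \eqref{eq:peeling-bounds}, which is exactly \eqref{eq:restriction-birth}. Otherwise we continue with $w=c$ at class $k+1$, and $\norm c_2^2\le s_{k+1}$ restores the invariant.
\end{itemize}

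\emph{Termination and counting.} Each examined class either stops the procedure or strictly increases $k$, and the loop halts at $K+1$. Hence at most $K+1-p$ classes are examined. Peels occur only at examined classes, at most one per class, so their labels increase strictly. On reaching $K+1$, the returned residual satisfies $\norm w_2^2\le s_{K+1}$ by the invariant. The identity ``peels plus output equals input'' holds at termination in every branch.

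\emph{Expected difficulty.} The only point needing care is the bookkeeping of the sum identity together with the retention of coefficients under repeated restriction. I do not expect a genuine obstacle. I would state the invariant explicitly and note that restrictions compose, since $(w\one_S)\one_{S'}=w\one_{S\cap S'}$.
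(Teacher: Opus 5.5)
Your proof is correct and follows essentially the same route as the paper: induction on the number of available classes, with a skip preserving the vector, a peel supplying the exact splitting and bounds from the peeling lemma, each continuation raising the class index by one, and the splittings composed. Your explicit invariant (input equals peels plus the current vector, squared norm at most $s_k$, coordinate restriction of the input) spells out what the paper's phrase ``composing the resulting splittings'' leaves implicit.
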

\begin{proof}
Induct on the number of available classes. A skip preserves the vector,
which already satisfies the next size bound. A peel gives the exact splitting and
bounds of \cref{lem:peeling}. Each continuation increases the class
index by one. Composing the resulting splittings proves the assertions.
\end{proof}

An active restriction is stored with its original row index, class,
coefficient vector $c$, and birth point $u$. These data remain fixed
during its lifetime. It stays active while $r_x(c)>s_k/2$ and retires
at the first subsequent state where $r_x(c)\le s_k/2$. At that state,
its surviving restriction is normalized beginning at class $k+1$.
A large row whose size has fallen to at most $M$ is normalized beginning
at class zero.

\begin{lemma}[Increasing class labels]\label{lem:one-birth}
For each original row, there is at most one birth and at most one peel
in each class. Every coefficient in each of its restrictions is either
zero or the corresponding coefficient of the original row. At any state,
a row has at most one active unsigned restriction.
\end{lemma}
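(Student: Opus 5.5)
The plan is to follow a single original row $a_i$ through the whole construction and induct on the sequence of events that change its status: becoming non-large, births, and retirements. Between these events the stored data of an active restriction ($c$, $u$, $k$) are fixed, so nothing about the row changes. We maintain three invariants for the row at every state:
\begin{itemize}
\item[(I1)] it is either large with no active restriction, or it has at most one active restriction of some class $k$, or it is terminal;
\item[(I2)] every coefficient vector produced so far agrees with $a_i$ on its support;
\item[(I3)] the classes of all births and peels produced so far form a strictly increasing sequence, and when the row has an active restriction of class $k$, every birth or peel label produced so far is at most $k$.
\end{itemize}

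First we treat the start. Once the row is no longer large, its alive restriction $a_i\one_V$ is normalized starting at class $0$. This vector agrees with $a_i$ on its support. By \cref{lem:normalization}, the procedure returns some peels with strictly increasing labels, which keep the original coefficients, together with at most one output. That output is either an active restriction of some class $k$, whose label is at least every peel label, or a terminal residual. This gives (I1)--(I3) right after the first normalization.

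Next we treat a retirement. When the active class-$k$ restriction $c$ retires at $z$, its surviving restriction $c\one_{\cV_z}$ still agrees with $a_i$ wherever it is nonzero, since $c$ does. This vector is normalized starting at class $k+1$. By \cref{lem:normalization} again, the new peel and birth labels are all at least $k+1$ and strictly increase. They are therefore strictly larger than every earlier label, which by (I3) are at most $k$. Concatenating the label sequences preserves strict monotonicity, so no class receives two births or two peels. The old restriction has retired and at most one new active output appears, so (I1) holds again. A terminal residual ends the row's history.

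The one point needing care is that within a single class the procedure can both peel and keep the retained vector as the active restriction. This gives one peel and one birth with the same label, which the statement allows, since it counts births and peels separately. The real bookkeeping is to make sure that a single complete update cannot trigger two retirements or normalizations for the same row. This follows from (I1): the row has at most one active restriction, and a newly born restriction has size greater than $s_{k+1}=s_k/2$, so it cannot retire at its own birth state. The large-to-normalized transition happens once, because size never increases. This is because $r_x(c)$ can only decrease as coordinates freeze, by \cref{lem:clock}. I expect this check of simultaneous events to be the main, though minor, obstacle.
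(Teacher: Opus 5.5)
Your proposal is correct and follows the paper's argument. The paper's proof uses the same ingredients: a retired class-$k$ restriction has surviving size at most $s_{k+1}$ and is renormalized starting at class $k+1$; labels within one normalization increase strictly; restriction and peeling keep the original coefficients; and each normalization returns at most one active restriction, which replaces the retired one. Your event-by-event induction just makes this bookkeeping explicit.

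One wording slip: (I3) as written says the combined sequence of birth and peel labels is strictly increasing. As you note yourself, a peel and a birth can share a class, so state (I3) separately for peel labels and for birth labels.
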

\begin{proof}
A class-$k$ restriction retires with surviving size at most $s_{k+1}$.
Its next normalization begins at $k+1$, and all labels encountered in a
normalization increase strictly. Restriction and peeling preserve the
coefficient assertion. Each normalization returns at most one active
restriction, which replaces the one just retired.
\end{proof}

\subsection{Regularized discrepancy and dangerous restrictions}
\label{sec:regularization}
Let a class-$k$ restriction $c$ be born at $u$. For each
$\varepsilon\in\{-1,1\}$ set
\begin{align}
 Z_c^\varepsilon(x)
 &=\varepsilon\ip{c}{x-u}
    +\beta_k\bigl(\cE_x(c)-\cE_u(c)\bigr),
 &\beta_k&=\frac{B_k}{2s_k},\label{eq:regularized}\\
 e_c^\varepsilon(x)
 &=\varepsilon c-2\beta_k(c^{\odot2}\odot x).
 &&\label{eq:gradient}
\end{align}
The coefficient vector in the energy includes the entire birth support.
Frozen coordinates make no subsequent contribution to its increment.
Thus $Z_c^\varepsilon(x)=Y_c^\varepsilon(x)-Y_c^\varepsilon(u)$, where
$Y_c^\varepsilon(x)=\varepsilon\ip{c}{x}+\beta_k\cE_x(c)$ is the
Bansal--Jiang regularized discrepancy for the signed row
$\varepsilon c$. Restricted to the alive coordinates, $e_c^\varepsilon(x)$
is the corresponding row of $E$ in~\cite[Section~3.2.1]{BJ25}.
Centering at birth leaves this gradient unchanged.

Each sign is initially \emph{live}. If it first reaches $B_k$ on a step
where the restriction remains active, that signed record becomes
\emph{dangerous}. Its moving gradient, restricted to the alive set, is
then blocked until retirement. This is a first-passage convention: the
mark remains if the later value of $Z_c^\varepsilon$ falls below $B_k$.
Retirement is determined by the unsigned restriction and applies to both
signs at the same new state.

Activity is a property of the unsigned restriction; live and dangerous
are statuses of its signed records. After one sign becomes dangerous,
the restriction stays active until its size halves. Both signed gradients
continue to belong to the class's affine matrix. The dangerous gradients
also belong to the blocking subspace, with their values recomputed at
the current point.

The peeling bound gives $2\beta_k|c(j)|\le1$. Consequently, throughout
the cube,
\begin{equation}\label{eq:gradient-bound}
 |e_c^\varepsilon(x)(j)|\le2|c(j)|,
 \qquad \norm{e_c^\varepsilon(x)}_2\le2\sqrt{s_k}.
\end{equation}

\begin{lemma}[Regularized increment and drift]\label{lem:increment}
For every $v\in\R^q$ and real $t$,
\begin{equation}\label{eq:exact-increment}
 Z_c^\varepsilon(x+tv)-Z_c^\varepsilon(x)
 =t\ip{e_c^\varepsilon(x)}{v}
  -\beta_kt^2\sum_jc(j)^2v(j)^2.
\end{equation}
If a centered direction has covariance $Q\preceq\eta\diag Q$, then
\begin{equation}\label{eq:drift}
 \E\bigl[Z_c^\varepsilon(x+\gamma v)-Z_c^\varepsilon(x)\bigr]
 \le-\theta_k\gamma^2
       (e_c^\varepsilon(x))^{\mathsf T}Qe_c^\varepsilon(x),
 \qquad \theta_k=\frac{B_k}{8\eta s_k}.
\end{equation}
If the moving gradient is orthogonal to $v$, the increment in
\eqref{eq:exact-increment} is nonpositive pointwise.
\end{lemma}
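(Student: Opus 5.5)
The proof is a direct computation, split into three parts: the exact increment identity, the drift bound, and the blocked case.

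\emph{Exact increment.} Since $\cE_u(c)$ is a constant, only $\varepsilon\ip{c}{x}+\beta_k\cE_x(c)$ varies with $x$. The linear part contributes $\varepsilon t\ip{c}{v}$. For the energy, expand coordinatewise:
\[
 1-(x(j)+tv(j))^2=1-x(j)^2-2tx(j)v(j)-t^2v(j)^2 .
\]
Multiplying by $\beta_kc(j)^2$ and summing over $j$ gives the increment $-2\beta_kt\sum_jc(j)^2x(j)v(j)-\beta_kt^2\sum_jc(j)^2v(j)^2$. Combining this with the linear term yields $t\ip{e_c^\varepsilon(x)}{v}-\beta_kt^2\sum_jc(j)^2v(j)^2$, which is \eqref{eq:exact-increment}. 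The identity holds for every real $v$, with no support assumption.

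\emph{Drift.} Set $t=\gamma$ and take expectations. The linear term vanishes because $\E v=0$, leaving
\[
 \E\bigl[Z_c^\varepsilon(x+\gamma v)-Z_c^\varepsilon(x)\bigr]=-\beta_k\gamma^2\sum_jc(j)^2Q(j,j).
\]
Write $e=e_c^\varepsilon(x)$. Applying $Q\preceq\eta\diag Q$ to the vector $e$ gives $e^{\mathsf T}Qe\le\eta\sum_je(j)^2Q(j,j)$. By \eqref{eq:gradient-bound}, $e(j)^2\le4c(j)^2$, and $Q(j,j)\ge0$ since $Q\succeq0$. Hence
\[
 e^{\mathsf T}Qe\le4\eta\sum_jc(j)^2Q(j,j),
 \qquad\text{so}\qquad
 -\beta_k\sum_jc(j)^2Q(j,j)\le-\frac{\beta_k}{4\eta}\,e^{\mathsf T}Qe .
\]
Since $\beta_k/(4\eta)=B_k/(8\eta s_k)=\theta_k$, this is \eqref{eq:drift}.

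\emph{Blocked case and the one real check.} If $\ip{e}{v}=0$, the linear term in \eqref{eq:exact-increment} vanishes. The remaining term $-\beta_kt^2\sum_jc(j)^2v(j)^2$ is nonpositive because $\beta_k>0$. The only step needing care is the pointwise bound $|e(j)|\le2|c(j)|$ throughout the cube. It follows from the peeling cap $|c(j)|\le s_k/B_k$, which gives $2\beta_k|c(j)|\le1$, together with $|x(j)|\le1$. Then $|e(j)|\le|c(j)|+2\beta_kc(j)^2|x(j)|\le2|c(j)|$. This bound is already recorded as \eqref{eq:gradient-bound}, so the argument has no genuine obstacle beyond this bookkeeping.
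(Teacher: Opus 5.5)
Your proposal is correct and matches the paper's proof essentially step for step. You expand the squares to get the exact increment, remove the linear term by centering, and apply $Q\preceq\eta\diag Q$ together with the gradient bound $|e(j)|\le2|c(j)|$ to obtain $e^{\mathsf T}Qe\le4\eta\sum_jc(j)^2Q(j,j)$; the identity $\beta_k/(4\eta)=\theta_k$ then gives the drift bound, and the blocked case is read off from the exact increment.
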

\begin{proof}
Expand the squares in \eqref{eq:regularized}. For the expectation, put
$C=e^{\mathsf T}Qe$ and $D=\sum_jc(j)^2Q(j,j)$. Spectral
independence and \eqref{eq:gradient-bound} give $C\le4\eta D$.
The mean increment is $-\beta_k\gamma^2D$, and
$\beta_k/(4\eta)=\theta_k$. The final assertion follows directly
from \eqref{eq:exact-increment}.
\end{proof}

Regularization therefore provides the same sign of quadratic correction
before and after a threshold crossing. The covariance controls the
expected linear response while a record is live. Blocking its moving
gradient later controls that response for each individual direction.

\begin{lemma}[Discrepancy during one lifetime]\label{lem:lifetime}
Suppose all directions have Euclidean norm one, and every dangerous
moving gradient is blocked. At every point during the lifetime of a
class-$k$ restriction, including its retirement point $z$,
\begin{equation}\label{eq:lifetime}
 |\ip{c}{z-u}|\le\frac32 B_k+2\gamma\sqrt{s_k}.
\end{equation}
\end{lemma}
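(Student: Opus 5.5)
The plan is to bound $\varepsilon\ip{c}{x-u}$ for each sign $\varepsilon\in\{-1,1\}$ separately, by splitting it into the regularized discrepancy and an energy correction. By \eqref{eq:regularized},
\[
 \varepsilon\ip{c}{x-u}=Z_c^\varepsilon(x)+\beta_k\bigl(\cE_u(c)-\cE_x(c)\bigr).
\]
The energy correction is controlled crudely. Every factor $1-x(j)^2$ lies in $[0,1]$ on the cube, so $\cE_x(c)\ge0$. The birth bound \eqref{eq:restriction-birth} gives $\cE_u(c)\le\norm c_2^2\le s_k$. Hence the correction is at most $\beta_ks_k=B_k/2$. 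It then suffices to show that
\[
 Z_c^\varepsilon(x)\le B_k+2\gamma\sqrt{s_k}
\]
at every state $x$ of the lifetime, up to and including the retirement point $z$. Applying this to both signs gives the absolute value in \eqref{eq:lifetime}.

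To bound $Z_c^\varepsilon$, I follow the status of the signed record step by step. At birth, $Z_c^\varepsilon(u)=0<B_k$. While the record is live and has not yet reached $B_k$, the bound holds trivially.

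Consider the first step $x\to x^+=x+\gamma v$ on which $Z_c^\varepsilon(x^+)\ge B_k$. The exact increment \eqref{eq:exact-increment} is a linear term plus a nonpositive quadratic term. By Cauchy--Schwarz, $\norm v_2=1$ and the gradient bound \eqref{eq:gradient-bound}, the linear term is at most $\gamma\norm{e_c^\varepsilon(x)}_2\le2\gamma\sqrt{s_k}$. Since $Z_c^\varepsilon(x)<B_k$, this gives $Z_c^\varepsilon(x^+)<B_k+2\gamma\sqrt{s_k}$. This covers both possibilities at that step: either the restriction stays active and the record becomes dangerous, or it retires simultaneously, in which case $x^+=z$ and the lifetime ends.

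After the record becomes dangerous, each subsequent step starts from a state $x$ where the moving gradient $e_c^\varepsilon(x)$ is blocked, so $\ip{e_c^\varepsilon(x)}{v}=0$. By the last assertion of \cref{lem:increment}, the increment is nonpositive pointwise. Therefore $Z_c^\varepsilon$ never exceeds the value it had just after crossing, up to and including $z$. Retirement adds no further step beyond the one reaching $z$, so $z$ is covered as well.

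The only delicate point is the bookkeeping of the crossing step. The gradient is blocked only at states where the record is already dangerous, so the step that produces the crossing is unblocked. Its overshoot must therefore be paid by the $\gamma$-term, and this term must use the gradient at the pre-step state, which is exactly what \eqref{eq:exact-increment} provides. Everything else follows directly from the earlier lemmas.
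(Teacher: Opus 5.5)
Your proposal is correct and follows essentially the same route as the paper's proof. You bound $Z_c^\varepsilon\le B_k+2\gamma\sqrt{s_k}$ for each sign through retirement: it stays strictly below $B_k$ while live, overshoots by at most $2\gamma\sqrt{s_k}$ on the single unblocked crossing step, and has nonpositive increments once its gradient is blocked. You then absorb the energy correction $\beta_k(\cE_u(c)-\cE_x(c))\le\beta_ks_k=B_k/2$ and apply the bound to both signs.
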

\begin{proof}
Before crossing, $Z_c^\varepsilon<B_k$. By
\eqref{eq:exact-increment} and \eqref{eq:gradient-bound}, a step can
increase it by at most $2\gamma\sqrt{s_k}$, since the quadratic term
is nonpositive. A surviving crossing activates the blocking constraint;
a simultaneous retirement ends that lifetime at the same endpoint.
After the mark is active, \cref{lem:increment} makes subsequent
increments nonpositive. Hence both signs satisfy
$Z_c^\varepsilon\le B_k+2\gamma\sqrt{s_k}$ through retirement.
The two energy values lie in $[0,\norm c_2^2]$, so their regularized
difference has absolute value at most $\beta_ks_k=B_k/2$.
Apply \eqref{eq:regularized} to both signs.
\end{proof}

\subsection{The row identity through retirement}\label{sec:row-identity}
For a current state $(x,V)$ define the affine section
\[
 \cF(x,V)=\{y\in[-1,1]^q:y(j)=x(j)\text{ for }j\notin V\}.
\]
Buffered frozen coordinates are fixed at their current values in this
section. Their rounding to signs will be a final, separately estimated
operation.

A peeled vector $t$ is stored with its anchor point $u$, because its
future contribution is $\ip{t}{y-u}$. A completed restriction $c$
is stored with its birth point $u$ and retirement point $z$; it contributes
the constant $\ip{c}{z-u}$. A terminal residual keeps its final anchor.

\begin{lemma}[Affine row identity]\label{lem:row-identity}
Assume that previously frozen coordinates never move, large rows are
blocked through their entry step, and restrictions are normalized as
above. For every original row that has entered normalization, and every
$y\in\cF(x,V)$,
\begin{equation}\label{eq:row-identity}
 \ip{a_i}{y-b}
 =\sum_{(t,u)\in\cP_i}\ip{t}{y-u}
  +\sum_{(c,u,z)\in\cC_i}\ip{c}{z-u}
  +\cR_i(y).
\end{equation}
Here $\cP_i$ and $\cC_i$ are its actual peeled and completed records.
For an active row, $\cR_i(y)=\ip{c_i}{y-u_i}$ for its current
restriction. For a terminal row it is the analogous expression for its
terminal residual. A still-large row has $\ip{a_i}{x-b}=0$.
\end{lemma}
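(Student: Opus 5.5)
The identity is proved by induction along the trajectory, that is, on the number of events (steps, births, retirements, normalizations) affecting row $i$. The invariant at state $(x,V)$ is \eqref{eq:row-identity} for all $y\in\cF(x,V)$. Two elementary facts do the work. First, since $\cF(x^+,V^+)\subseteq\cF(x,V)$ by \cref{lem:clock}, an identity that holds on the old section continues to hold on the new one. Second, a vector $w$ supported on $V$ satisfies $\ip{w}{y-x}=\ip{w}{y-x'}$ for all $y,x'\in\cF(x,V)$ that agree off $V$. This second fact is what permits re-anchoring.

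\textbf{Entry into normalization.} While the row is large, each step has $\ip{a_i}{v}=0$. For an alive-supported step, $\ip{a_i\one_V}{v}=\ip{a_i}{v}$, so this orthogonality comes from blocking the alive restriction. Hence $\ip{a_i}{x-b}=0$ up to and including the entry state $x_0$. At entry, every $y\in\cF(x_0,V_0)$ agrees with $x_0$ off $V_0$. This gives
\[
\ip{a_i}{y-b}=\ip{a_i}{x_0-b}+\ip{a_i\one_{V_0}}{y-x_0}=\ip{a_i\one_{V_0}}{y-x_0}.
\]
Now apply \cref{lem:normalization} to $a_i\one_{V_0}$. It splits into peeled vectors, possibly an active restriction, and possibly a terminal residual, all anchored at $x_0$. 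Linearity gives the identity with $\cC_i=\emptyset$.

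\textbf{Steps with no retirement.} All records keep their vectors and anchors. The identity is a polynomial identity in $y$ valid on the larger old section, so it restricts to the new section. No recomputation is needed.

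\textbf{Retirement of the active restriction $(c,u)$ at state $z$.} For $y\in\cF(z,V_z)$ we have $y=z$ off $V_z$, so
\[
\ip{c}{y-u}=\ip{c}{z-u}+\ip{c\one_{V_z}}{y-z},
\]
which is the identity displayed in \cref{fig:restriction-history}. The first term is moved into $\cC_i$ as the completed record $(c,u,z)$. The surviving part $c\one_{V_z}$ has size $r_z(c)\le s_k/2=s_{k+1}$. Normalizing it from class $k+1$ via \cref{lem:normalization} splits it exactly into new peels anchored at $z$ together with a new active restriction born at $z$, or a terminal residual. Linearity again preserves \eqref{eq:row-identity}.

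\textbf{Terminal residuals and main obstacle.} A terminal residual is never modified afterwards. Peeled records only restrict to smaller sections. The main obstacle is bookkeeping rather than analysis: ensuring that a retirement or entry happening simultaneously with a step is evaluated at the new state $z$, with the section $\cF(z,V_z)$. Frozen coordinates of $c$ must contribute exactly once, inside $\ip{c}{z-u}$. Here I rely on the hypothesis that frozen coordinates never move, which gives $y(j)=z(j)$ for $j\notin V_z$ for all later $y$. I would also check that multiple retirements of different rows in one step do not interact, which holds because the identity is stated row by row.
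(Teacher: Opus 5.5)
Your proof is correct and is essentially the paper's own argument. It uses zero accumulated discrepancy at entry (the row is blocked through its entry step), the exact normalization splitting of $a_i\one_{V_u}$ at the entry point, restriction of unchanged affine expressions via $\cF(x^+,V^+)\subseteq\cF(x,V)$, and at retirement the splitting $\ip{c}{y-u}=\ip{c}{z-u}+\ip{c\one_{V_z}}{y-z}$ followed by normalization from class $k+1$.

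One slip: your ``second fact'' is false as written. For $w$ supported on $V$ and $x'\in\cF(x,V)$, $\ip{w}{x'-x}$ need not vanish; presumably you meant $w$ supported off $V$. It is never used, however, since your re-anchoring computations rely only on the correct fact that $\ip{w}{y-z}=\ip{w\one_{V_z}}{y-z}$ whenever $y$ agrees with $z$ off $V_z$.
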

\begin{proof}
At the point $u$ where a row first enters normalization, its accumulated
discrepancy is zero: it has been blocked at every previous step,
including the entry step. For $y\in\cF(u,V_u)$,
\[
 \ip{a_i}{y-b}=\ip{a_i}{y-u}
              =\ip{a_i\one_{V_u}}{y-u}.
\]
Normalization gives the initial identity by its exact vector splitting.

For each later step, permanent freezing gives
$\cF(x^+,V^+)\subseteq\cF(x,V)$. Thus every unchanged affine
expression restricts to the new section. If a restriction retires at
$x^+$, then on that section
\begin{equation}\label{eq:retirement-splitting}
 \ip{c}{y-u}
 =\ip{c}{x^+-u}+\ip{c\one_{V^+}}{y-x^+}.
\end{equation}
The first term is stored as a completed contribution. Normalization
splits the second term into the new anchored peels and residual. This
is exactly the update of \eqref{eq:row-identity}. A large row that
remains large has zero step increment, and a newly entering large row
is covered by the initialization argument. Induction proves the result.
\end{proof}

\paragraph{Example: the row identity after retirement.}
Suppose a row enters at $u$, where normalization gives a peeled vector
$t_0$ and an active restriction $c_0$. Suppose $c_0$ retires at $z$,
and its surviving restriction is normalized as
$c_0\one_{V_z}=t_1+c_1$, with one peel and one active output for
simplicity. On the new frozen-coordinate section, the row identity is
\begin{equation}\label{eq:worked-retirement}
 \ip{a_i}{y-b}
 =\ip{t_0}{y-u}+\ip{c_0}{z-u}
  +\ip{t_1}{y-z}+\ip{c_1}{y-z}.
\end{equation}
The coefficient vectors $c_0$ and $c_1$ may share coordinates. Their
terms describe the successive intervals from $u$ to $z$ and from $z$
to $y$. The coordinates removed from $c_0$ at retirement are frozen at
$z$, which is why they contribute only to the completed interval.
The original anchor $u$ of $t_0$ is retained. Multiple immediate peels
replace $t_1$ by their sum, with the common anchor $z$.

\begin{corollary}[Row estimate from the history]\label{cor:row-bound}
At every state produced by these updates and the blocking rule,
\begin{equation}\label{eq:row-bound}
 \norm{A(x-b)}_\infty
 \le\frac72\sum_{k=0}^{K}B_k+2+21\gamma.
\end{equation}
\end{corollary}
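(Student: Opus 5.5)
We apply \cref{lem:row-identity} row by row and bound each of the three groups of terms separately, with $y=x$. Rows that are still large give $\ip{a_i}{x-b}=0$, so fix a row $a_i$ that has entered normalization. We take $y=x$, which lies in $\cF(x,V)$.

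\emph{Peeled records.} By \cref{lem:one-birth}, the row has at most one peel in each class $k\in\{0,\ldots,K\}$. By \cref{lem:peeling}, a class-$k$ peel $t$ anchored at $u$ satisfies $|\ip{t}{x-u}|\le2B_k$. The peeled terms therefore contribute at most $\sum_k 2B_k$.

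\emph{Completed records and the current restriction.} The row has at most one birth per class, so it has at most one record (completed or active) in each class. A completed class-$k$ record satisfies $|\ip{c}{z-u}|\le\frac32B_k+2\gamma\sqrt{s_k}$ by \cref{lem:lifetime}. The current active restriction obeys the same bound at $x$, because \cref{lem:lifetime} holds at every point of the lifetime. Its class differs from the classes of all completed records, since labels increase strictly. Summing over classes, these terms give $\sum_k\frac32B_k+2\gamma\sum_k\sqrt{s_k}$. Here $\sum_k\sqrt{s_k}\le\sqrt M/(1-2^{-1/2})=\sqrt{60/7}\,(2+\sqrt2)\approx 9.99$, so $2\gamma\sum_k\sqrt{s_k}\le 20\gamma\le 21\gamma$.

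\emph{Terminal residual.} A terminal residual has squared norm at most $s_{K+1}\le1/q$. It has at most $q$ coordinates, so by Cauchy--Schwarz its $\ell_1$ norm is at most one. Its anchored term is therefore at most $\norm{r}_1\norm{x-u}_\infty\le2$. A row has either an active restriction or a terminal residual, not both, so this $2$ and the active-record bound above are never incurred together. Adding the three contributions gives $\frac72\sum_kB_k+2+21\gamma$. The $21\gamma$ leaves some slack for the factor $2\sqrt{s_k}$ from the overshoot.

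The main point needing care is the bookkeeping showing that, in each class, the active-or-completed contributions are charged at most once. This follows from \cref{lem:one-birth}: at most one birth per class, and the active restriction is the latest such birth. The other point is to check that \cref{lem:lifetime} applies at the current state and not only at retirement; its statement does include every point of the lifetime.
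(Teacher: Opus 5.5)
Your proposal is correct and follows the paper's proof essentially step for step: evaluate the row identity at $y=x$, charge $2B_k$ per peel by \cref{lem:peeling}, charge $\frac32B_k+2\gamma\sqrt{s_k}$ per tracked interval by \cref{lem:lifetime}, allow at most one of each per class by \cref{lem:one-birth}, and charge $2$ for the terminal residual via $\norm{r}_1\le\sqrt{qs_{K+1}}\le1$. Your estimate $2\sum_k\sqrt{s_k}<20$ is valid but rests on a decimal approximation, whereas the paper certifies $<21$ exactly via $\sqrt{60/7}<3$ and $2^{-1/2}<5/7$; your remark that the terminal residual and the current restriction never occur together is true but not needed, since the bound simply adds all three contributions.
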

\begin{proof}
Evaluate \eqref{eq:row-identity} at $y=x$. Each peeled vector costs
at most $2B_k$ by \cref{lem:peeling}. Each completed or current
restriction costs at most $3B_k/2+2\gamma\sqrt{s_k}$ by
\cref{lem:lifetime}, evaluated at its own endpoint. A terminal
residual has $\ell_1$ norm at most $\sqrt{q s_{K+1}}\le1$, hence
costs at most two. By \cref{lem:one-birth}, at most one peel and one
tracked interval occur in each class. A still-large row has zero
accumulated discrepancy. Finally,
\[
 2\sum_{k\ge0}\sqrt{s_k}
 =\frac{2\sqrt{60/7}}{1-2^{-1/2}}<21.
\]
For an exact comparison, use $\sqrt{60/7}<3$ and
$2^{-1/2}<5/7$.
\end{proof}

\section{Historical exponential sums and complete updates}\label{sec:capital}
The covariance budget concerns the current restrictions, while the
potential must retain information from restrictions that have already
retired. We define the historical quantities so that each status change
has an exact effect on the potential.

Fix a class $k$. By \cref{lem:one-birth}, the signed births in that
class can be indexed by slots $(i,\varepsilon)$, with $i\in[m]$ and
$\varepsilon\in\{-1,1\}$. A slot acquires its coefficient vector and
birth point only at its unique birth. All later quantities for that
slot use those same coefficients. Write $a$ for a signed slot and
$c_a$ for its unsigned coefficient vector.

\subsection{Exponential sums and the two column estimates}
The original decoupling proof~\cite[Section~3.2.5]{BJ25} controls
$W_t=\sum_i\min\{\Phi_t(i),e^{\lambda B}\}$, where
$\Phi_t(i)=e^{\lambda Z_t(i)}$. Here $X$ denotes each capped factor,
and $\Phi$ is the column-weighted sum over restriction histories.

Fix $L_k>0$ and put $\lambda_k=L_k/B_k$. A record starts with $X=1$ at
birth. While it is live, multiply $X$ by the exponential of its
regularized increment and cap the result at $e^{L_k}$. A dangerous or retired
record retains its value of $X$. More explicitly, an old live record has
\begin{equation}\label{eq:capital-update}
 X_a^+=\min\{e^{L_k},X_a e^{\lambda_k\Delta Z_a}\}.
\end{equation}
This last update is performed even if the restriction retires at the
new point. At a live state, $X_a=e^{\lambda_k Z_a}$ and $Z_a<B_k$.
At a surviving first crossing, \eqref{eq:capital-update} sets
$X_a^+=e^{L_k}$ exactly.

For a fixed column $j$ and every born slot in class $k$, put
\[
 q_a^{(j)}=\frac{c_a(j)^2}{s_k}.
\]
This normalization gives an active restriction total weight
$\sum_{j\in V}q_a^{(j)}>1/2$. Summing the weighted exponentials over the alive
columns will therefore count dangerous restrictions. At a fixed column,
the column norm hypothesis bounds all births in this class, because
each original row contributes at most once per sign.
Unallocated slots contribute zero. Define
\begin{equation}\label{eq:capital-definitions}
 I^{(j,k)}=\sum_{a\text{ born}}q_a^{(j)},\qquad
 \Phi^{(j,k)}=\sum_{a\text{ born}}q_a^{(j)}X_a,
 \qquad G^{(j,k)}=\Phi^{(j,k)}-I^{(j,k)}.
\end{equation}
The quantity $I^{(j,k)}$ is the total mass inserted at births. A new record
contributes $q_a^{(j)}$ to both $I$ and $\Phi$,
so its initial contribution to $G$ is zero. The difference $G$ may be
negative. The dangerous-family count uses the nonnegative sum
$\Phi=I+G$, while the outer exponential controls $G$ from above.

\begin{lemma}[Historical column mass]\label{lem:immigration}
At every state,
\begin{equation}\label{eq:immigration}
 I^{(j,k)}\le\frac2{s_k},\qquad
 0\le\Phi^{(j,k)}\le\frac{2e^{L_k}}{s_k}.
\end{equation}
If $\cD_k$ is the current dangerous signed family, then
\begin{equation}\label{eq:danger-capital}
 e^{L_k}\sum_{a\in\cD_k}q_a^{(j)}
 \le I^{(j,k)}+G^{(j,k)}.
\end{equation}
\end{lemma}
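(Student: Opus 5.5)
The plan is to prove the three assertions in turn, using only the definitions in \eqref{eq:capital-update}--\eqref{eq:capital-definitions}, \cref{lem:one-birth}, and the column norm hypothesis.

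\emph{Step 1: the mass bound.} By \cref{lem:one-birth}, each original row $i$ has at most one birth in class $k$. Each birth creates two signed slots, $(i,1)$ and $(i,-1)$, with the same unsigned vector $c_a$. By \cref{lem:one-birth}, every coefficient of $c_a$ is either zero or equal to $a_i(j)$, so $c_a(j)^2\le a_i(j)^2$. Summing over born slots gives
\[
 I^{(j,k)}\le\frac{2}{s_k}\sum_i a_i(j)^2\le\frac{2}{s_k},
\]
by the column hypothesis on the restricted matrix. This restriction only drops columns, so the hypothesis is preserved.

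\emph{Step 2: the range of $X_a$.} Every $X_a$ lies in $(0,e^{L_k}]$. It starts at $1\le e^{L_k}$, since $L_k>0$. The update \eqref{eq:capital-update} multiplies by a positive exponential and caps at $e^{L_k}$, and dangerous or retired records keep their value. Since $q_a^{(j)}\ge0$, this gives
\[
 0\le\Phi^{(j,k)}\le e^{L_k}I^{(j,k)}\le\frac{2e^{L_k}}{s_k}.
\]

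\emph{Step 3: the dangerous-family inequality \eqref{eq:danger-capital}.} I will show that every $a\in\cD_k$ has $X_a=e^{L_k}$ exactly. A record becomes dangerous only at a surviving first crossing. There $Z_a^+\ge B_k$, so $X_ae^{\lambda_k\Delta Z_a}=e^{\lambda_kZ_a^+}\ge e^{L_k}$, using the live identity $X_a=e^{\lambda_kZ_a}$ and $\lambda_kB_k=L_k$. The cap therefore makes $X_a^+=e^{L_k}$. Afterwards the value is retained; the first-passage convention means later decreases of $Z$ do not change $X$.

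Then, since $\cD_k$ is a subset of the born slots and all terms are nonnegative,
\[
 e^{L_k}\sum_{a\in\cD_k}q_a^{(j)}=\sum_{a\in\cD_k}q_a^{(j)}X_a\le\Phi^{(j,k)}=I^{(j,k)}+G^{(j,k)}.
\]

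\emph{Main point to verify.} The only delicate step is the live identity $X_a=e^{\lambda_kZ_a}$ used in Step 3. It holds by induction while the record is live and the uncapped value stays below $e^{L_k}$. This is equivalent to $Z_a<B_k$, which is exactly the live condition. So the inductive invariant to carry is that the cap never binds before the first crossing.
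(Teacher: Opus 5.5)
Your proposal is correct and follows the paper's proof essentially step for step. The one-birth-per-class bound of \cref{lem:one-birth}, counted twice for the two signs, gives the bound on $I^{(j,k)}$; the range $X_a\in(0,e^{L_k}]$ gives the bound on $\Phi^{(j,k)}$; and the exact value $e^{L_k}$ that the cap assigns at a surviving first crossing, with your live-identity invariant $X_a=e^{\lambda_k Z_a}$ as the paper records after \eqref{eq:capital-update}, gives \eqref{eq:danger-capital}.
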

\begin{proof}
For each original row there is at most one birth in class $k$, and its
coefficient at $j$ is either zero or $a_i(j)$. Including both signs,
\[
 \sum_{a\text{ born in }k}c_a(j)^2
 \le2\sum_i a_i(j)^2\le2.
\]
This proves the birth-mass bound. Every born record has $X$ in
$(0,e^{L_k}]$, which gives the bound on $\Phi$. Each current dangerous
record retains the value $e^{L_k}$ reached at its first crossing.
Its exponential is among the nonnegative terms in $\Phi=I+G$, proving
\eqref{eq:danger-capital}.
\end{proof}

There is a separate estimate for the current unsigned family. If
$\cE_k^{\mathrm{cur}}$ is the total size on alive coordinates of its
class-$k$ restrictions, then
\begin{equation}\label{eq:current-mass}
 \sum_k\cE_k^{\mathrm{cur}}
 =\sum_{j\in V}\sum_{i\text{ current}}c_i(j)^2
 \le\sum_{j\in V}\sum_i a_i(j)^2\le h.
\end{equation}
Here each original row has at most one current restriction, across all
classes. In \eqref{eq:immigration}, the class is fixed and every
historical birth in that class is included. The historical estimate
controls the exponential sum; the current estimate controls the affine SDP
charge.

\subsection{The order of a complete event}\label{sec:event}
At a current state, the blocking subspace contains the alive restrictions
of large rows, all dangerous signed moving gradients, and the current
point $x|_V$. For each class, the affine matrix $E_k$ contains both
signed gradients of every current active restriction, restricted to $V$.
A feasible covariance determines the finite direction law of
\cref{lem:direction-law}.

For each atom, the complete successor is formed in the following order.
First set $x^+=x+\gamma v$ using the old constraints. Update every
old live signed exponential by \eqref{eq:capital-update}, using its old
coefficient vector and birth point. Old dangerous exponentials stay fixed,
while their regularized discrepancies are evaluated at the new point.
Next determine $V^+$ and the restrictions whose alive size has fallen
to their retirement boundary. Archive those restrictions at $x^+$ with
their final signed discrepancies and exponentials. Among the surviving
restrictions, mark every newly crossing sign dangerous. Normalize the surviving
restrictions of retired rows, and any large rows that have become small,
and insert the resulting births at $x^+$ with $X=1$. The new SDP
data are formed only after these operations.

In particular, a threshold crossing simultaneous with retirement creates
no new dangerous constraint. If $\cT$ denotes the old signed slots whose
restrictions retire, and $\cC$ denotes the newly crossing old live
slots, the new dangerous family is
\begin{equation}\label{eq:danger-update}
 \cD^+=(\cD\cup\cC)\setminus\cT.
\end{equation}
Both signs use the same unsigned retirement predicate. Previously
retired records remain unchanged, and all historical coefficients and
birth points remain fixed.

\begin{lemma}[Exponential-sum increment for the complete event]
\label{lem:event-identity}
Let $\mathcal U$ be the old live signed family in a fixed class, and
write $r_a=q_a^{(j)}X_a$. For every provisional complete successor,
\begin{align}
 G^{+(j,k)}-G^{(j,k)}
 &=\sum_{a\in\mathcal U}q_a^{(j)}
       \bigl(\min\{e^{L_k},X_a e^{\lambda_k\Delta Z_a}\}-X_a\bigr)
       \label{eq:event-equality}\\
 &\le\sum_{a\in\mathcal U}r_a
             \bigl(e^{\lambda_k\Delta Z_a}-1\bigr).
       \label{eq:event-upper}
\end{align}
\end{lemma}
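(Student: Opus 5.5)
The plan is to write $G=\Phi-I$ as a sum over born slots and follow each slot through the complete event of \cref{sec:event}. The slots fall into four groups: old live slots (the family $\mathcal U$), old dangerous slots, previously retired slots, and new births at $x^+$. For each group we compute its change in $q_a^{(j)}X_a$ and in $q_a^{(j)}$. Births are handled by the convention in \eqref{eq:capital-definitions}: a new slot adds $q_a^{(j)}$ to both $I^{(j,k)}$ and $\Phi^{(j,k)}$, since $X_a=1$, so it contributes nothing to $G^{+(j,k)}-G^{(j,k)}$. This holds even though its coefficients may depend on the chosen atom $v$.

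Old dangerous slots and previously retired slots keep their $X_a$ by the update rules. Their weights $q_a^{(j)}=c_a(j)^2/s_k$ are also unchanged, because historical coefficient vectors stay fixed. Note that $q_a^{(j)}$ is defined from the birth coefficients, not the alive part, so freezing column $j$ does not alter it. Neither $I$ nor these terms of $\Phi$ move. Retirement only archives a record with its current $X$ and changes no value. A slot in $\mathcal U$ that retires at $x^+$ has already received its final update \eqref{eq:capital-update}, since the paper performs that update before archiving. A surviving first crossing likewise receives $X_a^+=\min\{e^{L_k},X_ae^{\lambda_k\Delta Z_a}\}$, which equals $e^{L_k}$, and it retains this value as a dangerous record. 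So every slot in $\mathcal U$, whatever its new status, changes by exactly $q_a^{(j)}(\min\{e^{L_k},X_ae^{\lambda_k\Delta Z_a}\}-X_a)$. Summing over the groups gives the equality \eqref{eq:event-equality}.

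The inequality \eqref{eq:event-upper} then follows term by term. Since $q_a^{(j)}\ge0$ and $\min\{e^{L_k},y\}\le y$, each summand is at most $q_a^{(j)}X_a(e^{\lambda_k\Delta Z_a}-1)=r_a(e^{\lambda_k\Delta Z_a}-1)$. Here $\Delta Z_a=Z_a(x^+)-Z_a(x)$ is computed with the old coefficients and birth point.

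The main obstacle is the bookkeeping in the second step. We must confirm that the ordering of \cref{sec:event} gives every slot in $\mathcal U$ exactly one update, whether it survives, crosses, or retires simultaneously. We must also check that no non-live slot's $X$ or weight changes, and that births enter $I$ and $\Phi$ symmetrically. Once these are verified from the listed rules and \cref{lem:one-birth}, which guarantees unique slots per class, the identity is a direct sum.
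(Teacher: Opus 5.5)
Your proposal is correct and follows the same route as the paper's proof. In both, births enter $\Phi$ and $I$ with equal mass, dangerous and retired records keep their values, and each old live record receives the single update \eqref{eq:capital-update} before retirement is classified; dropping the cap then gives \eqref{eq:event-upper}.
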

\begin{proof}
Every birth adds the same mass to $\Phi$ and $I$. All old
records outside $\mathcal U$ retain their exponential values, including previously
retired and dangerous records. The old live family receives the update
in \eqref{eq:capital-update} before retirement is classified. These
facts give \eqref{eq:event-equality}. Replacing the capped value by its
uncapped value gives \eqref{eq:event-upper}.
\end{proof}

The right side contains only predecessor data and the regularized
increments along $v$. New coefficients may depend on $v$ through
freezing and normalization, but their contribution to $G$ is exactly
zero at birth. We can therefore differentiate the smooth expression in
the next section while retaining a pointwise bound on the complete update.

\subsection{The outer exponential and its normalization}
Assume the parameters satisfy
\begin{equation}\label{eq:lambda-theta}
 0<\lambda_k\le\theta_k=\frac{B_k}{8\eta s_k}.
\end{equation}
Set
\begin{equation}\label{eq:potential-normalization}
 \rho_k=\frac{1973}{100B_k^2},\qquad
 K_k=\frac{\lambda_k\rho_k e^{L_k}}
             {2(\theta_k-\lambda_k/2)}.
\end{equation}
All these quantities are positive. The coefficient cap gives
\begin{equation}\label{eq:mass-cap}
 \alpha_k q_a^{(j)}\le\rho_k,
 \qquad\text{and hence}\qquad
 \alpha_k r_a^2\le\rho_k e^{L_k}r_a.
\end{equation}

The choice of $K_k$ follows the covariance--drift comparison in the
Bansal--Jiang decoupling argument~\cite[Section~3.2.5]{BJ25}, applied
here to column-weighted historical exponential sums. If $C$ is the covariance
matrix of the old live signed responses, then
\[
 r^{\mathsf T}Cr
 \le\alpha_k\sum_a r_a^2C_{aa}
 \le\rho_k e^{L_k}\sum_a r_aC_{aa}.
\]
The affine constraint gives the first inequality, and the coefficient
and exponential caps give the second. Thus the variance of the weighted sum
is controlled by the same diagonal quantity as its negative drift.

\Needspace{11\baselineskip}
The resulting upper bound for the second derivative of the normalized
outer exponential, computed in \cref{sec:cancellation}, is
\[
 \left[-\frac{2\lambda_k(\theta_k-\lambda_k/2)}{K_k}
       +\frac{\lambda_k^2\rho_k e^{L_k}}{K_k^2}\right]
       \sum_a r_aC_{aa}.
\]
The first term combines regularized drift with the second derivative
of each inner exponential. The second comes from the covariance of
their sum. Equation~\eqref{eq:potential-normalization} chooses $K_k$
so that the bracket vanishes. The next section keeps the slack in this
comparison and uses the buffer to obtain a finite-step estimate.

\section{The finite-step exponential estimate}\label{sec:local}
Fix a predecessor, a column $j$, and a class $k$. All records and
weights in this section are those of this one predecessor. Abbreviate
\[
 \begin{gathered}
 s=s_k,\quad B=B_k,\quad L=L_k,\quad T=e^L,\quad
 \lambda=\lambda_k,\\
 \beta=\beta_k,\quad \theta=\theta_k,\quad\alpha=\alpha_k,\quad
 \rho=\rho_k,\quad K'=K_k.
 \end{gathered}
\]
Let $a$ range over the old live signed family, and put
$r_a=q_a^{(j)}X_a$. The covariance is $Q=U/\Tr U$. Define
\begin{align}
 u_a(v)&=\ip{e_a}{v},&
 d_a(v)&=\sum_pc_a(p)^2v(p)^2,\label{eq:local-responses}\\
 C_{ab}&=e_a^{\mathsf T}Qe_b,&
 D_a&=\sum_pc_a(p)^2Q(p,p),&
 S&=\sum_a r_aC_{aa}.\label{eq:local-covariances}
\end{align}
By \cref{lem:immigration} and the geometric bounds,
\begin{equation}\label{eq:local-uniform-bounds}
 \sum_a r_a\le\frac{2T}{s},\qquad
 |u_a(v)|\le2\sqrt s,\qquad
 0\le d_a(v)\le\frac{s^2}{B^2}
\end{equation}
for every unit direction.

The smooth upper increment for a real auxiliary parameter $t$ is
\begin{equation}\label{eq:smooth-increment}
 \cH(t,v)=\sum_a r_a
       \left(e^{\lambda t u_a(v)-\lambda\beta t^2d_a(v)}-1\right),
 \qquad F(t)=\E e^{\cH(t,v)/K'}.
\end{equation}
The identity for the weighted exponential sum and the exact regularized increment give, for the
same complete successor,
\begin{equation}\label{eq:actual-smooth-comparison}
 \Delta G^{(j,k)}\le\cH(\gamma,v).
\end{equation}
We differentiate \eqref{eq:smooth-increment} with the predecessor and
its law fixed. The expectation is a finite sum.

\subsection{The covariance cancellation}\label{sec:cancellation}
Suppose the coordinate and signed affine constraints hold:
\begin{equation}\label{eq:local-SI}
 Q\preceq\eta\diag Q,
 \qquad C\preceq\alpha\diag C.
\end{equation}
The second inequality is inherited by the old live family from the
principal submatrix of the class's full signed affine block.
Set
\begin{equation}\label{eq:slacks}
\begin{aligned}
 \Delta_{\mathrm{coord}}&=\sum_a r_a(4\eta D_a-C_{aa}),\\
 \Delta_{\mathrm{aff}}&=\alpha\sum_a r_a^2C_{aa}-r^{\mathsf T}Cr,\\
 \Delta_{\mathrm{cap}}&=\rho T S-\alpha\sum_a r_a^2C_{aa}.
\end{aligned}
\end{equation}

\begin{lemma}[Exact second derivative]\label{lem:second-derivative}
All three quantities in \eqref{eq:slacks} are nonnegative, and
\begin{equation}\label{eq:second-identity}
 (K')^2F''(0)
 =-2\lambda\theta K'\Delta_{\mathrm{coord}}
  -\lambda^2\Delta_{\mathrm{aff}}
  -\lambda^2\Delta_{\mathrm{cap}}.
\end{equation}
\end{lemma}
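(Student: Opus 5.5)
The plan is to compute $F''(0)$ directly, since $F$ is a finite average of smooth functions of $t$, and then regroup the terms using $\beta=4\eta\theta$ and the definition of $K'$. Nonnegativity of the three slacks will follow separately from three inequalities already available. No obstacle is conceptual; the only care needed is matching each term to the right slack.

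\emph{Differentiation.} Because the law of $v$ is finite, differentiation commutes with $\E$. Write $\cH(t)=\cH(t,v)$. Then $\cH(0)=0$, and
\[
 \cH'(0)=\lambda\sum_a r_au_a(v),\qquad
 \cH''(0)=\sum_a r_a\bigl(\lambda^2u_a(v)^2-2\lambda\beta d_a(v)\bigr).
\]
Hence $F''(0)=\E\bigl[\cH''(0)/K'+\cH'(0)^2/(K')^2\bigr]$. Each $u_a$ is linear in $v$ and $\E vv^{\mathsf T}=Q$, so $\E u_a u_b=C_{ab}$ and $\E d_a=D_a$. This gives
\[
 (K')^2F''(0)=K'\sum_a r_a\bigl(\lambda^2C_{aa}-2\lambda\beta D_a\bigr)+\lambda^2r^{\mathsf T}Cr.
\]

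\emph{Regrouping.} From $\beta=B/(2s)$ and $\theta=B/(8\eta s)$ we get $\beta=4\eta\theta$, so $2\lambda\beta D_a=2\lambda\theta\cdot4\eta D_a$. Adding and subtracting $2\lambda\theta C_{aa}$ inside the first sum turns it into
\[
 -2\lambda\theta K'\Delta_{\mathrm{coord}}-2\lambda K'(\theta-\lambda/2)S .
\]
By \eqref{eq:potential-normalization}, $2\lambda K'(\theta-\lambda/2)=\lambda^2\rho T$, so the second piece equals $-\lambda^2\rho TS$. Combined with $\lambda^2r^{\mathsf T}Cr$, this is $-\lambda^2(\Delta_{\mathrm{aff}}+\Delta_{\mathrm{cap}})$, because the terms $\pm\lambda^2\alpha\sum_ar_a^2C_{aa}$ telescope. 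This yields \eqref{eq:second-identity}.

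\emph{Nonnegativity.}
\begin{itemize}
\item For $\Delta_{\mathrm{coord}}$: the coordinate constraint in \eqref{eq:local-SI} gives $C_{aa}=e_a^{\mathsf T}Qe_a\le\eta\sum_pe_a(p)^2Q(p,p)$. The gradient bound \eqref{eq:gradient-bound}, $|e_a(p)|\le2|c_a(p)|$, then gives $C_{aa}\le4\eta D_a$, exactly as in \cref{lem:increment}. Since $r_a\ge0$, each summand is nonnegative.
\item For $\Delta_{\mathrm{aff}}$: test the affine inequality $C\preceq\alpha\diag C$ on the vector $r$. This inequality is inherited by the principal submatrix indexed by the old live family.
\item For $\Delta_{\mathrm{cap}}$: use \eqref{eq:mass-cap}, $\alpha r_a^2\le\rho Tr_a$, multiplied by $C_{aa}\ge0$ and summed over $a$.
\end{itemize}
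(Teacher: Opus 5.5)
Your proposal is correct and follows essentially the paper's proof. You differentiate the finite average at $t=0$, use $\beta=4\eta\theta$ with the normalization of $K'$ to cancel the coefficient of $S$, and get nonnegativity from the same three inequalities: the gradient bound with coordinate spectral independence, the affine constraint tested on $r$, and the mass cap. The only difference is cosmetic: you add and subtract $2\lambda\theta C_{aa}$, whereas the paper substitutes $r^{\mathsf T}Cr=\rho TS-\Delta_{\mathrm{aff}}-\Delta_{\mathrm{cap}}$ and collects the coefficient of $S$.
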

\begin{proof}
For each signed record, coordinate spectral independence gives
\begin{align*}
 4\eta D_a-C_{aa}
 &=\eta\sum_p\bigl(4c_a(p)^2-e_a(p)^2\bigr)Q(p,p)\\
 &\quad+e_a^{\mathsf T}(\eta\diag Q-Q)e_a\ge0.
\end{align*}
The gradient bound proves the sign of the first term. Affine spectral
independence gives $\Delta_{\mathrm{aff}}\ge0$, and
\eqref{eq:mass-cap} gives $\Delta_{\mathrm{cap}}\ge0$.

Differentiation of the full outer exponential gives
\begin{equation}\label{eq:second-direct}
 (K')^2F''(0)
 =-2\lambda\beta K'\sum_a r_aD_a
    +\lambda^2K'S+\lambda^2r^{\mathsf T}Cr.
\end{equation}
Use $\beta=4\eta\theta$ and
$r^{\mathsf T}Cr=\rho TS-\Delta_{\mathrm{aff}}-\Delta_{\mathrm{cap}}$.
The remaining coefficient of $S$ is
\[
 -2\lambda\theta K'+\lambda^2K'+\lambda^2\rho T=0
\]
by \eqref{eq:potential-normalization}. This proves the identity.
\end{proof}

The outer covariance term in \eqref{eq:second-direct} is essential:
it is the variance of a sum of signed responses. The affine constraint
bounds that sum with the live exponential factors still attached. The cap
then converts the squared weights back to the weights appearing in the
drift term.

\subsection{Negative curvature from the buffer}\label{sec:buffer}
Let the covariance be supported on coordinates where
$|x(p)|\le1-\delta$, with $0<\delta\le1$. On this support the
coefficient cap gives
\begin{equation}\label{eq:buffer-gradient}
 |e_a(p)|\le(2-\delta)|c_a(p)|,
 \qquad C_{aa}\le\eta(2-\delta)^2D_a.
\end{equation}
Indeed $2\beta|c_a(p)|\le1$, so the first inequality follows from
the explicit gradient. The second follows from coordinate spectral
independence. Define the normalized weighted energy
\begin{equation}\label{eq:weighted-energy}
 \mathcal D=\frac1{K'}\sum_a r_aD_a\ge0.
\end{equation}
It follows that
$\Delta_{\mathrm{coord}}\ge\eta(4\delta-\delta^2)K'\mathcal D$.
Substituting this into \eqref{eq:second-identity} gives
\begin{equation}\label{eq:negative-curvature}
 F''(0)\le-\frac{\lambda\beta}{2}
                    (4\delta-\delta^2)\mathcal D.
\end{equation}
In the actual walk, the covariance is supported on the buffered alive
set, so we may take $\delta=\gamma$.

After multiplication by the Taylor factor $\gamma^2/2$, this buffer
contributes a negative term of order $\gamma^3\mathcal D$. Symmetry
will remove the odd Taylor terms. We bound the remaining error by a
constant times $\gamma^4\mathcal D$, so it can be absorbed by a mesh
bound depending only on the class parameters. The common factor
$\mathcal D$ is essential for this uniformity, since the energy can
approach zero at later states. The following lemma provides exactly
that weighted remainder estimate.

\begin{lemma}[Weighted fourth derivative]\label{lem:weighted-fourth}
Let $\omega_a\ge0$, $\sum_a\omega_a\le\sigma$, $|p_a|\le P_0$,
and $0\le q_a\le Q_0$, where $\sigma,P_0,Q_0$ are nonnegative.
Put
\[
 J(t)=\sum_a\omega_a(e^{p_at-q_at^2}-1),\qquad
 W=\sum_a\omega_a(p_a^2+q_a).
\]
There are explicit $h_b>0$ and $C_4\ge0$, depending only on
$\sigma,P_0,Q_0$, such that
\begin{equation}\label{eq:weighted-fourth}
 \left|\frac{d^4}{dt^4}e^{J(t)}\right|\le C_4 W
 \qquad(|t|\le h_b).
\end{equation}
The constants and a complete differentiation proof are given in
\cref{app:derivatives}.
\end{lemma}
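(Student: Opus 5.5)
Write $g_a(t)=e^{p_at-q_at^2}-1$ and $f(t)=e^{J(t)}$. The plan is to expand $f^{(4)}$ by Faà di Bruno, so that it becomes $f$ times a polynomial in $J',J'',J''',J^{(4)}$:
\[
 f^{(4)}=f\Bigl(J^{(4)}+4J'J'''+3(J'')^2+6(J')^2J''+(J')^4\Bigr).
\]
Each monomial has at least one derivative factor. The strategy is to bound one such factor by a constant times $W$, and every other factor, together with $f$, by a constant depending only on $\sigma,P_0,Q_0$.

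\textbf{Uniform bounds.} Choose $h_b=1$, or any fixed positive number. For $|t|\le h_b$ we have
\[
 |p_at-q_at^2|\le P_0h_b+Q_0h_b^2=:E_0 .
\]
Hence $|g_a|\le e^{E_0}+1$, so $|J|\le\sigma(e^{E_0}+1)$ and $f\le e^{\sigma(e^{E_0}+1)}$. Set $\phi_a(t)=p_at-q_at^2$. Then $\phi_a'=p_a-2q_at$ and $\phi_a''=-2q_a$, with all higher derivatives zero. Therefore
\[
 g_a^{(r)}=e^{\phi_a}\,\Pi_r(\phi_a',\phi_a''),
\]
where $\Pi_r$ is the Faà di Bruno polynomial; explicitly $\Pi_1=\phi'$, $\Pi_2=\phi'^2+\phi''$, and so on.

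\textbf{The key weighted estimate.} I claim that for $r=1,\dots,4$,
\[
 \bigl|g_a^{(r)}(t)\bigr|\le c_r\bigl(|p_a|+q_a\bigr)
 \quad\text{and}\quad
 \bigl|g_a^{(r)}(t)\bigr|\le c_r'(p_a^2+q_a)\ \text{ for } r\ge2 .
\]
For $r\ge2$, every monomial in $\Pi_r$ has total weight $r$, where $\phi'$ has weight $1$ and $\phi''$ has weight $2$. So each monomial contains either a factor $\phi''=-2q_a$ or at least two factors $\phi'$. We have $|\phi'|\le|p_a|+2Q_0h_b$ and $\phi'^2\le2p_a^2+8q_a^2h_b^2\le2p_a^2+8Q_0h_b^2q_a$. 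Hence each monomial is bounded by a constant times $p_a^2+q_a$. This holds because $\phi'$ itself is $p_a$ plus a multiple of $q_a$.

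For $r=1$ we only have $|g_a'|\lesssim|p_a|+q_a$, which is not linear in $W$. This is the main obstacle. The fix is to pair factors of $J'$. By Cauchy--Schwarz,
\[
 (J')^2\le\Bigl(\sum_a\omega_a\Bigr)\sum_a\omega_a(g_a')^2\le\sigma\,c\sum_a\omega_a(p_a^2+q_a)=c\sigma W,
\]
since $(g_a')^2\lesssim\phi_a'^2\lesssim p_a^2+q_a$.

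Next I check each monomial of the Faà di Bruno expansion:
\begin{itemize}
\item $J^{(4)}$ and $J''$ are bounded by a constant times $W$ directly, from the $r\ge2$ estimate.
\item In $J'J'''$, $(J')^2J''$ and $(J')^4$, the factors $J'''$ and $J''$ are $O(W)$, and $|J'|\le\sigma\sup|g_a'|$ is bounded by a constant, which handles $J'J'''$ and $(J')^2J''$.
\item $(J')^4=(J')^2(J')^2\le\text{const}\cdot c\sigma W$.
\item $(J'')^2\le(\text{const}\cdot W)\cdot\sup|J''|$, with $\sup|J''|$ bounded by a constant.
\end{itemize}

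Summing these bounds with the bound on $f$ gives an explicit $C_4$, computed as a polynomial in $\sigma,P_0,Q_0,h_b$ times $e^{E_0}e^{\sigma(e^{E_0}+1)}$. The appendix would then just record these explicit constants.
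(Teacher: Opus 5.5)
Your proposal is correct and follows the paper's proof essentially step for step. You use the same Fa\`a di Bruno expansion of $(e^J)^{(4)}$, the same bounds $|J''|,|J'''|,|J^{(4)}|=O(W)$ (each monomial of order $\ge2$ carries a factor $q_a$ or $(p_a-2q_at)^2\le 2p_a^2+8Q_0q_a$), and the same Cauchy--Schwarz estimate $(J')^2\le c\,\sigma W$ to control $(J')^4$. The only difference is cosmetic: the paper takes the short window $h_b=(1+D_0+3\sigma D_0)^{-1}$ so that $e^{z_a}<3$ and $e^{J}<3$, which yields polynomial constants, while your choice $h_b=1$ yields constants carrying factors $e^{E_0}$ and $e^{\sigma(e^{E_0}+1)}$; this is equally admissible for the lemma as stated.
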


For each supported unit direction, apply the lemma with
\begin{equation}\label{eq:fourth-substitution}
 \omega_a=r_a/K',\qquad
 p_a(v)=\lambda u_a(v),\qquad
 q_a(v)=\lambda\beta d_a(v).
\end{equation}
Uniform parameter choices, depending only on the class schedule, are
\begin{equation}\label{eq:uniform-fourth-parameters}
 \sigma=\frac{2T}{sK'},\qquad
 P_0=\lambda(1+s),\qquad Q_0=\frac{\lambda s}{2B}.
\end{equation}
Here $2\sqrt s\le1+s$ bounds $p_a$, and the coefficient cap in
\eqref{eq:local-uniform-bounds} bounds $q_a$. Moreover,
\begin{align}
 \E W(v)
 &=\frac1{K'}\sum_a r_a
           (\lambda^2 C_{aa}+\lambda\beta D_a)\notag\\
 &\le(4\eta\lambda^2+\lambda\beta)\mathcal D.
 \label{eq:expected-weight}
\end{align}
Set
\begin{equation}\label{eq:analytic-mesh}
 \Gamma_4=C_4(4\eta\lambda^2+\lambda\beta),
 \qquad
 h_* =\min\left\{\frac12,h_b,\frac{6\lambda\beta}{\Gamma_4}\right\}.
\end{equation}
For the positive class parameters in \eqref{eq:uniform-fourth-parameters},
the constants in \cref{app:derivatives} give $\Gamma_4>0$, so
$h_*>0$. Taking the finite average in \eqref{eq:weighted-fourth} gives
\begin{equation}\label{eq:averaged-fourth}
 |F^{(4)}(t)|\le\Gamma_4\mathcal D\qquad(|t|\le h_b).
\end{equation}
The bounds use historical mass and the coefficient cap; they do not
require a positive lower bound on an eigenvalue or on a direction's
probability.

\Needspace{27\baselineskip}
\subsection{The one-step estimate for actual successors}
\begin{theorem}[One-step exponential bound]\label{thm:local-estimate}
Fix a class $k$ and a column $j$, with the positive parameters in
\eqref{eq:fixed-parameters}, \eqref{eq:regularized},
\eqref{eq:lambda-theta}, and \eqref{eq:potential-normalization}, and
$\lambda_k=L_k/B_k$. Let $x\in[-1,1]^q$ be a predecessor whose old
live signed records satisfy
\[
 \norm{c_a}_2^2\le s_k,\qquad
 \norm{c_a}_\infty\le s_k/B_k,\qquad
 0<X_a=e^{\lambda_k Z_a}<e^{L_k},\qquad
 \sum_a q_a^{(j)}\le2/s_k,
\]
where $q_a^{(j)}=c_a(j)^2/s_k$ and the signed gradients are given by
\eqref{eq:gradient}. Use a finite centrally symmetric law on unit
directions supported on $\{p:|x(p)|<1-\gamma\}$, with covariance $Q$.
Assume that $Q$ and the old live signed response covariance $C$ satisfy
\[
 Q\preceq\eta\diag Q,\qquad C\preceq\alpha_k\diag C,
\]
and form each complete successor as in \cref{sec:event}.
If $0<\gamma\le h_*$, with $h_*$ defined in
\eqref{eq:analytic-mesh}, then
\begin{equation}\label{eq:one-step}
 \E\exp\left(\frac{\Delta G^{(j,k)}}{K_k}\right)
 \le1-\frac12\lambda_k\beta_k\gamma^3\mathcal D\le1.
\end{equation}
Here the normalized weighted energy is
\[
 \mathcal D=\frac1{K_k}\sum_a q_a^{(j)}X_a
                  \sum_p c_a(p)^2Q(p,p).
\]
For fixed class parameters, the allowed mesh is uniform over all
predecessors and direction laws satisfying these hypotheses.
\end{theorem}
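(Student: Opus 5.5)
The plan is to combine the pointwise comparison \eqref{eq:actual-smooth-comparison} with a fourth-order Taylor expansion of $F(t)=\E e^{\cH(t,v)/K'}$ at $t=0$, evaluated at $t=\gamma$.

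\textbf{Reduction to the smooth expression.} By \cref{lem:event-identity} and the exact increment \eqref{eq:exact-increment}, every complete successor formed as in \cref{sec:event} satisfies $\Delta G^{(j,k)}\le\cH(\gamma,v)$. This holds because $\lambda_k\Delta Z_a=\lambda\gamma u_a(v)-\lambda\beta\gamma^2d_a(v)$ for each old live record. Monotonicity of the exponential then gives
\[
 \E\exp(\Delta G^{(j,k)}/K_k)\le F(\gamma).
\]
It therefore suffices to show $F(\gamma)\le1-\tfrac12\lambda\beta\gamma^3\mathcal D$.

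\textbf{Taylor expansion and odd terms.} We have $F(0)=1$, since $\cH(0,v)=0$. Central symmetry of the law gives $u_a(-v)=-u_a(v)$ and $d_a(-v)=d_a(v)$, so $\cH(t,-v)=\cH(-t,v)$. Hence $F$ is even, and $F'(0)=F'''(0)=0$. Taylor's theorem with Lagrange remainder on $[0,\gamma]$ then gives
\[
 F(\gamma)=1+\frac{\gamma^2}{2}F''(0)+\frac{\gamma^4}{24}F^{(4)}(\xi),\qquad 0<\xi<\gamma.
\]
$F$ is smooth because the expectation is a finite sum.

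\textbf{Second-order term.} The hypotheses of \cref{lem:second-derivative} hold, with the affine constraint inherited by the old live family. The direction law is supported where $|x(p)|<1-\gamma$, so \cref{sec:buffer} applies with $\delta=\gamma$. By \eqref{eq:negative-curvature},
\[
 \frac{\gamma^2}{2}F''(0)\le-\frac{\lambda\beta}{4}(4\gamma-\gamma^2)\gamma^2\mathcal D.
\]
Since $\gamma\le\frac12$, we have $4\gamma-\gamma^2\ge\frac72\gamma$, so this term is at most $-\frac78\lambda\beta\gamma^3\mathcal D$.

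\textbf{Fourth-order term.} Check that the substitution \eqref{eq:fourth-substitution} meets the uniform parameters \eqref{eq:uniform-fourth-parameters}:
\begin{itemize}
 \item $\sum_a r_a/K'\le 2T/(sK')$ follows from \eqref{eq:local-uniform-bounds}, using $X_a<T$ and $\sum_a q_a^{(j)}\le2/s$;
 \item $|\lambda u_a|\le2\lambda\sqrt s\le\lambda(1+s)$;
 \item $\lambda\beta d_a\le\lambda\frac{B}{2s}\frac{s^2}{B^2}=\frac{\lambda s}{2B}$.
\end{itemize}
Since $\gamma\le h_b$, \eqref{eq:averaged-fourth} gives $|F^{(4)}(\xi)|\le\Gamma_4\mathcal D$. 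Then $\gamma\le6\lambda\beta/\Gamma_4$ bounds the remainder by
\[
 \frac{\gamma^4}{24}\Gamma_4\mathcal D\le\frac14\lambda\beta\gamma^3\mathcal D.
\]
Adding the two terms gives $F(\gamma)\le1-\frac58\lambda\beta\gamma^3\mathcal D\le1-\frac12\lambda\beta\gamma^3\mathcal D$. This is at most $1$ because $\mathcal D\ge0$. Uniformity of the mesh is immediate, since $h_*$ depends only on the class parameters through $\sigma,P_0,Q_0$.

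\textbf{Main obstacle.} The delicate point is the fourth-order bound: it must be proportional to the same $\mathcal D$ as the curvature term, not just a constant. This is supplied by \cref{lem:weighted-fourth} together with \eqref{eq:expected-weight}. I would first verify carefully that the expectation of $W(v)$ equals $\frac1{K'}\sum_a r_a(\lambda^2C_{aa}+\lambda\beta D_a)$, using $\E u_a^2=C_{aa}$ and $\E d_a=D_a$. I would also check that $C_{aa}\le4\eta D_a$, so that this expectation is at most $(4\eta\lambda^2+\lambda\beta)\mathcal D$.

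A second care point is that $X_a=e^{\lambda Z_a}$ holds exactly for live records. This makes the cap in \eqref{eq:event-equality} only decrease the increment, so \eqref{eq:event-upper} applies pointwise for every successor, including retiring ones.
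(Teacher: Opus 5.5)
Your proposal is correct and follows the paper's proof essentially step for step: the pointwise comparison $\Delta G^{(j,k)}\le\cH(\gamma,v)$ with monotonicity of the exponential, and evenness of $F$ from central symmetry. It then uses a fourth-order Taylor expansion combined with the buffered curvature bound at $\delta=\gamma$, and absorbs the weighted fourth-derivative remainder via $\Gamma_4\gamma\le6\lambda\beta$.

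The differences are cosmetic. You use the Lagrange rather than the integral remainder. You also exploit $\gamma\le h_*\le\frac12$ to reach the sharper intermediate constant $\frac58$, whereas the paper uses only $\gamma\le1$ and lands exactly on $\frac12$.
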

\begin{proof}
With the predecessor fixed, symmetry gives
$\cH(-t,v)=\cH(t,-v)$, so $F(-t)=F(t)$. Therefore
$F'(0)=F'''(0)=0$. Taylor's formula with integral remainder and
\eqref{eq:averaged-fourth} yield
\[
 F(\gamma)\le1+\frac{\gamma^2}{2}F''(0)
                     +\frac{\Gamma_4\gamma^4}{24}\mathcal D.
\]
Apply \eqref{eq:negative-curvature} with $\delta=\gamma$:
\begin{align*}
 F(\gamma)
 &\le1-\lambda\beta(1-\gamma/4)\gamma^3\mathcal D
                +\frac{\Gamma_4\gamma^4}{24}\mathcal D\\
 &\le1-\frac12\lambda\beta\gamma^3\mathcal D.
\end{align*}
The last line uses $\gamma\le1$ and
$\Gamma_4\gamma\le6\lambda\beta$.
Finally \eqref{eq:actual-smooth-comparison} holds pointwise for the
same complete successors. Monotonicity of the exponential transfers the
bound to the actual event.

If $\mathcal D=0$, then \eqref{eq:expected-weight} gives
$\E W(v)=0$. Every positive-weight atom has $W(v)=0$, and each
inner term with positive weight has $p_a=q_a=0$. Thus the smooth
exponential equals one, which also verifies this case directly.
\end{proof}

Symmetry is used for the smooth predecessor expression. The freeze and
retirement predicates in the actual event may have different outcomes
on $v$ and $-v$; the pointwise comparison already includes both outcomes.
All mesh constants were obtained from the fixed class parameters before
choosing any future covariance.

\section{The schedule and the covariance budget}\label{sec:schedule}
We now choose the class parameters at the prescribed depth $R\ge0$.
The choices must bound dangerous restrictions by a fixed fraction of the
current alive dimension and make the row thresholds summable.

For every $k\ge0$, set
\begin{align}
 w_0&=\frac{17}{140},\qquad
 w_k=\frac{123}{1190}\left(\frac{15}{17}\right)^{k-1}
                       \quad(k\ge1),\qquad
 \delta_k=\frac{43}{80}w_k,\label{eq:weights}\\
 L_k&=\log\frac{32}{\delta_ks_k},\qquad
 \lambda_k=\frac{L_k}{B_k(R)},\notag\\
 B_k(R)&=\left(\frac{1925648}{675}
              \frac{s_kL_k(L_k+R)}{\delta_k}\right)^{1/4}
             +\sqrt{\frac{244}{9}s_kL_k}.
 \label{eq:thresholds}
\end{align}
Use these thresholds in the normalization of \cref{sec:restrictions},
and use \eqref{eq:potential-normalization} for $\rho_k,K_k$.
For the finite cutoff, define the potential of the complete state by
\begin{equation}\label{eq:potential}
 \Psi=\sum_{k=0}^{K}\sum_{j=1}^{q}
                   w_k e^{G^{(j,k)}/K_k}.
\end{equation}
The historical records determine $G^{(j,k)}$. At initialization every
born record has $X=1$, so $G^{(j,k)}=0$ and
$\Psi_0=q\sum_{k=0}^{K}w_k\le q$.
The potential sums over all $q$ initially fractional columns throughout
the construction. When a column freezes, its historical accounts remain
in this sum and continue to follow their records. The current alive set
enters the counting argument only when we apply Jensen's inequality.

\subsection{Why the threshold has a fourth root}
The dangerous-family count proved below has a term
\[
 2h e^{-L_k}\left[\frac2{s_k}
             +K_k\left(\log\frac qh+\log\frac1{w_k}\right)\right].
\]
The choice of $L_k$ makes its first contribution exactly
$\delta_kh/8$. For the second contribution,
\[
 e^{-L_k}K_k
 =\frac{\lambda_k\rho_k}{2(\theta_k-\lambda_k/2)}
 \le\frac{\lambda_k\rho_k}{\theta_k}
 \asymp\frac{s_kL_k}{B_k^4}.
\]
Thus $B_k^4$ must dominate $s_kL_k(L_k+R)/\delta_k$. The
square-root summand in \eqref{eq:thresholds} guarantees
$\lambda_k\le\theta_k$ and leaves strict slack in the fourth-power
inequality. The particular rational constants make the total SDP budget
and the final threshold sum explicit.

\begin{lemma}[Schedule inequalities]\label{lem:schedule}
For every $R\ge0$, the weights have total mass
\begin{equation}\label{eq:weight-sums}
 \sum_{k\ge0}w_k=1,\qquad
 \sum_{k\ge0}\delta_k=\frac{43}{80}.
\end{equation}
For each class $k\ge0$, the logarithmic and drift parameters satisfy
\begin{equation}\label{eq:schedule-basic}
 L_k>1,\qquad \log(1/w_k)<L_k,\qquad
 0<\lambda_k\le\theta_k.
\end{equation}
The two coefficients in the dangerous-restriction count obey
\begin{equation}\label{eq:K-bound}
\begin{aligned}
 \frac{4e^{-L_k}}{s_k}&=\frac{\delta_k}{8},\\[3pt]
 e^{-L_k}K_k&\le\frac{120353}{225}\,\frac{s_kL_k}{B_k^4}.
\end{aligned}
\end{equation}
The thresholds also satisfy the strict bound
\begin{equation}\label{eq:fourth-budget}
 \frac{240706}{225}\,\frac{s_kL_k(L_k+R)}{B_k^4}
       <\frac{3\delta_k}{8}.
\end{equation}
\end{lemma}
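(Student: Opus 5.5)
The proof is a direct verification from the definitions in \eqref{eq:fixed-parameters}, \eqref{eq:potential-normalization}, \eqref{eq:weights} and \eqref{eq:thresholds}. No earlier lemma is needed beyond the parameter definitions. I will check the items in the order they are stated; the only genuine care is in getting the rational constants to match exactly.

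\emph{Weight sums.} The tail $k\ge1$ is geometric, so
\[
 \sum_{k\ge1}w_k=\frac{123}{1190}\cdot\frac{1}{1-15/17}=\frac{123\cdot17}{2380}=\frac{2091}{2380}.
\]
Since $w_0=17/140=289/2380$, the total is $1$. Multiplying by $43/80$ gives $\sum_k\delta_k=43/80$.

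\emph{The first line of \eqref{eq:K-bound}.} From $e^{-L_k}=\delta_ks_k/32$ we get $4e^{-L_k}/s_k=\delta_k/8$ exactly.

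\emph{The bounds in \eqref{eq:schedule-basic}.}
\begin{itemize}
\item For $\log(1/w_k)<L_k$: this is equivalent to $\delta_ks_k<32w_k$, that is $\tfrac{43}{80}s_k<32$. This holds because $s_k\le M=60/7$.
\item For $L_k>1$: the product $\delta_ks_k$ is nonincreasing in $k$. From $k=1$ on it decreases by the factor $\tfrac{15}{34}$, and $\delta_1s_1<\delta_0s_0$ is checked directly. So it suffices to check $32/(\delta_0s_0)>e$. Here $\delta_0s_0=\tfrac{43}{80}\cdot\tfrac{17}{140}\cdot\tfrac{60}{7}<1$, which gives this with room.
\item For $0<\lambda_k\le\theta_k$: positivity is clear. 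Since $\lambda_k=L_k/B_k$ and $\theta_k=B_k/(8\eta s_k)$, the inequality is $B_k^2\ge8\eta s_kL_k=\tfrac{244}{9}s_kL_k$, using $\eta=61/18$. This follows because $B_k$ is at least its square-root summand in \eqref{eq:thresholds}.
\end{itemize}

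\emph{The second line of \eqref{eq:K-bound}.} By \eqref{eq:potential-normalization}, $e^{-L_k}K_k=\lambda_k\rho_k/(2(\theta_k-\lambda_k/2))$. Since $\lambda_k\le\theta_k$, we have $\theta_k-\lambda_k/2\ge\theta_k/2$, so $e^{-L_k}K_k\le\lambda_k\rho_k/\theta_k$. Substituting the definitions gives
\[
 \frac{\lambda_k\rho_k}{\theta_k}=\frac{L_k}{B_k}\cdot\frac{1973}{100B_k^2}\cdot\frac{8\eta s_k}{B_k}=\frac{1973\cdot488}{1800}\,\frac{s_kL_k}{B_k^4}=\frac{120353}{225}\,\frac{s_kL_k}{B_k^4}.
\]

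\emph{The strict bound \eqref{eq:fourth-budget}.} Write $B_k=a+b$, where $a$ is the fourth-root summand and $b>0$ the square-root summand; $b>0$ because $L_k>1$ and $s_k>0$. Then $B_k^4>a^4=\tfrac{1925648}{675}\,s_kL_k(L_k+R)/\delta_k$. Hence the left side of \eqref{eq:fourth-budget} is strictly less than
\[
 \frac{240706}{225}\cdot\frac{675}{1925648}\,\delta_k=\frac{722118}{1925648}\,\delta_k=\frac38\,\delta_k,
\]
because $1925648\cdot3/8=722118$.

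The main obstacle is just this exact rational bookkeeping. The constants were chosen so that the bound from $a$ alone gives equality with $3\delta_k/8$. The strictness therefore rests entirely on $b>0$, and I would make that dependence explicit.
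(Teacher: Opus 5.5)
Your proposal is correct and follows essentially the same direct verification as the paper: the geometric weight sum, the exact identity $e^{-L_k}=\delta_ks_k/32$, $B_k^2\ge 8\eta s_kL_k$ from the square-root summand, $\theta_k-\lambda_k/2\ge\theta_k/2$, and the $3/8$ ratio made strict by the positive second summand of $B_k$. The only cosmetic difference is in proving $L_k>1$: you use that $\delta_ks_k$ is nonincreasing and $\delta_0s_0<1$, so $L_k\ge L_0>\log 32$, whereas the paper evaluates $e^{L_0}$ and $e^{L_1}$ exactly and then uses $L_{k+1}-L_k=\log(34/15)$ for $k\ge1$.
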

\begin{proof}
The geometric series gives
$17/140+(123/1190)/(1-15/17)=1$. Also
\[
 L_k-\log(1/w_k)=\log\frac{32}{(43/80)s_k}>0.
\]
\begin{samepage}
The first two logarithmic parameters satisfy
\[
 e^{L_0}=\frac{125440}{2193}>3>e,
 \qquad e^{L_1}=\frac{2132480}{15867}>e^{L_0}.
\]
Together with $L_{k+1}-L_k=\log(34/15)>0$ for $k\ge1$, this gives $L_k>1$.
\end{samepage}
The square-root summand of $B_k$ gives
$B_k^2\ge(244/9)s_kL_k=8\eta s_kL_k$, which proves
$\lambda_k\le\theta_k$.

Since $e^{-L_k}=\delta_ks_k/32$, the first identity in
\eqref{eq:K-bound} is immediate. The second follows from
\[
 e^{-L_k}K_k
 \le\frac{\lambda_k\rho_k}{\theta_k}
 =8\eta\frac{1973}{100}\frac{s_kL_k}{B_k^4}
 =\frac{120353s_kL_k}{225B_k^4}.
\]
Finally $B_k$ is strictly larger than its positive fourth-root summand,
and
$\frac{240706}{225}/\frac{1925648}{675}=3/8$.
This proves \eqref{eq:fourth-budget}.
\end{proof}

The same formulas show why the thresholds are summable. For $k\ge1$,
\[
 \frac{s_k}{\delta_k}=\frac{s_1}{\delta_1}
                 \left(\frac{17}{30}\right)^{k-1},\qquad
 L_k=L_1+(k-1)\log\frac{34}{15}.
\]
Since $L_k>1$, we have $L_k(L_k+R)\le L_k^2(1+R)$ for $R\ge0$.
Thus \eqref{eq:thresholds} gives an absolute constant $C_0$ such that,
for every $k\ge1$ and $R\ge0$,
\[
 B_k(R)\le C_0(1+R)^{1/4}\sqrt{k+1}
 \left[\left(\frac{17}{30}\right)^{(k-1)/4}
                    +2^{-(k-1)/2}\right].
\]
Both series converge. The $k=0$ term has the same depth bound, so
$\sum_{k\ge0}B_k(R)=O((1+R)^{1/4})$, with an absolute constant
independent of the finite cutoff. Summing over the size classes therefore
introduces no further logarithmic factor. \Cref{prop:threshold-sum}
gives the explicit constant used in the endpoint bound.

\subsection{Counting dangerous restrictions at the current state}
\begin{lemma}\label{lem:danger-count}
Suppose the geometric records and exponential sums at a state satisfy the
properties above and $\Psi\le q$. If its alive count satisfies
$h>N_R(q)$, then, in every class,
\begin{equation}\label{eq:danger-budget}
 |\cD_k|<\frac{\delta_k}{2}h.
\end{equation}
\end{lemma}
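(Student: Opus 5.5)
Fix a class $k$. The plan is to sum the column inequality \eqref{eq:danger-capital} over the current alive columns, use Jensen's inequality to control $\sum_{j\in V}G^{(j,k)}$ by the potential bound $\Psi\le q$, and then feed in the schedule inequalities of \cref{lem:schedule}.

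\emph{Lower bound on the left side.} Every dangerous signed record belongs to an active restriction, so its alive normalized mass exceeds one half:
\[
 \sum_{j\in V}q_a^{(j)}=\frac{r_x(c_a)}{s_k}>\frac12 .
\]
Summing \eqref{eq:danger-capital} over $j\in V$ therefore gives
\[
 \tfrac12 e^{L_k}|\cD_k|
 \le\sum_{j\in V}I^{(j,k)}+\sum_{j\in V}G^{(j,k)}
 \le\frac{2h}{s_k}+\sum_{j\in V}G^{(j,k)},
\]
where the last step uses \cref{lem:immigration}. The inequality is strict whenever $\cD_k\neq\emptyset$; if $\cD_k$ is empty, \eqref{eq:danger-budget} is trivial.

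\emph{Upper bound on $\sum_{j\in V}G^{(j,k)}$.} All terms of $\Psi$ are positive, so $\Psi\le q$ implies
\[
 w_k\sum_{j\in V}e^{G^{(j,k)}/K_k}\le q .
\]
Jensen's inequality for the convex exponential, applied to the uniform average over the $h$ alive columns, gives
\[
 \exp\Bigl(\frac1{hK_k}\sum_{j\in V}G^{(j,k)}\Bigr)\le\frac{q}{w_kh},
 \qquad\text{hence}\qquad
 \sum_{j\in V}G^{(j,k)}\le hK_k\Bigl(\log\frac qh+\log\frac1{w_k}\Bigr).
\]
The hypothesis $h>N_R(q)$ together with integrality gives $h\ge\lceil qe^{-R}\rceil\ge qe^{-R}$, so $\log(q/h)\le R$. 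By \eqref{eq:schedule-basic}, $\log(1/w_k)<L_k$. The right side is therefore at most $hK_k(R+L_k)$. Any negative values of $G$ only help.

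\emph{Combining with the schedule.} Multiplying by $2e^{-L_k}$ yields
\[
 |\cD_k|\le\frac{4e^{-L_k}}{s_k}h+2e^{-L_k}K_k(L_k+R)h ,
\]
strictly when $\cD_k$ is nonempty. By \eqref{eq:K-bound}, the first term equals $\delta_kh/8$. The second term is at most
\[
 \frac{240706}{225}\frac{s_kL_k(L_k+R)}{B_k^4}h<\frac{3\delta_k}{8}h
\]
by \eqref{eq:fourth-budget}. The total is therefore strictly less than $\delta_kh/2$.

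The only step needing care is the Jensen step: $\Psi$ sums over all $q$ columns, including frozen ones, and the restriction to alive columns is legitimate only because every summand is positive. Apart from that, the argument is bookkeeping. In particular, it needs the exact alignment of the constants $4/s_k\cdot e^{-L_k}=\delta_k/8$ and $2\cdot120353/225=240706/225$ with \eqref{eq:fourth-budget}, and both are already supplied by \cref{lem:schedule}.
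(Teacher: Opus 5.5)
Your proposal is correct and follows the paper's proof step for step. You restrict $\Psi\le q$ to the positive alive-column terms and apply Jensen, then sum \eqref{eq:danger-capital} over $V$ using alive normalized mass $>1/2$ and $I^{(j,k)}\le 2/s_k$, and close with $\log(q/h)\le R$ and \cref{lem:schedule}; you only leave implicit that $h>N_R(q)\ge 8$ gives $h\ge 9>0$, which the averaging and the final strict inequality both need.
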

\begin{proof}
For a fixed class, the positive terms of \eqref{eq:potential} give
$\sum_{j\in V}e^{G^{(j,k)}/K_k}\le q/w_k$. Jensen's inequality
therefore yields
\begin{equation}\label{eq:jensen}
 \frac1h\sum_{j\in V}\frac{G^{(j,k)}}{K_k}
 \le\log\frac{q}{w_kh}.
\end{equation}
Every dangerous signed restriction is active, so its alive normalized
mass is greater than $1/2$. In particular, including the case of an
empty family,
\[
 |\cD_k|\le2\sum_{a\in\cD_k}\sum_{j\in V}q_a^{(j)}.
\]
Sum \eqref{eq:danger-capital} over $V$, use
\eqref{eq:immigration} and \eqref{eq:jensen}, and obtain
\begin{equation}\label{eq:danger-general}
 |\cD_k|\le2he^{-L_k}
    \left[\frac2{s_k}
       +K_k\left(\log\frac qh+\log\frac1{w_k}\right)\right].
\end{equation}
Since $h$ is an integer and $h>N_R(q)$, we have $h\ge9$ and
$h\ge\lceil qe^{-R}\rceil\ge qe^{-R}$. Consequently
$\log(q/h)\le R$. Apply \cref{lem:schedule} to bound
\eqref{eq:danger-general} strictly by
$h(\delta_k/8+3\delta_k/8)=\delta_kh/2$.
\end{proof}

The Jensen estimate is taken over the current alive columns.
Its right side is determined by $q/h$ and the class weight. Historical
records entered only through the column estimate
\eqref{eq:immigration}. Thus the argument controls the current dangerous
family without a union bound over the row set or over possible histories.

\subsection{Feasibility of the next covariance}
\begin{proposition}\label{prop:feasibility}
At a state as in \cref{lem:danger-count}, there is a nonzero
$U\succeq0$, supported on $V\times V$, that annihilates the alive
restrictions of all large rows, all dangerous signed moving gradients,
and $x|_V$, and satisfies
\begin{gather}
 U(j,j)\le1,\qquad \Tr U\ge h/65536,
 \qquad U\preceq\frac{61}{18}\diag U,\label{eq:chosen-coordinate}\\
 E_kUE_k^{\mathsf T}\preceq\frac{1973}{100s_k}
                 \diag(E_kUE_k^{\mathsf T})
 \qquad(0\le k\le K).\label{eq:chosen-affine}
\end{gather}
Here $E_k$ contains the current signed moving gradients in class $k$.
\end{proposition}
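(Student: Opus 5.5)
We will apply \cref{cor:finite-row} on $\R^h$, identified with the alive coordinates $V$. The parameters are $\eta=61/18$ and $\kappa=1/65536$. The blocks are $E_k$ for $0\le k\le K$, with $\alpha_k=1973/(100s_k)$. The blocking subspace $W$ is spanned by three families: the alive restrictions of large rows, the dangerous signed moving gradients restricted to $V$, and $x|_V$. The resulting $U$ is extended by zero off $V\times V$, and it annihilates $W$. The trace bound $\Tr U\ge\kappa h>0$ makes $U$ nonzero. The coordinate and affine inequalities are exactly \eqref{eq:sdp-coordinate} and \eqref{eq:sdp-affine}. Everything therefore reduces to verifying the strict budget \eqref{eq:sdp-budget} with $\Delta=\dim W/h$.

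\textbf{Bounding $\dim W$.} By \eqref{eq:column-size} there are at most $h/M=7h/60$ large rows. By \cref{lem:danger-count}, which applies because $h>N_R(q)$ and $\Psi\le q$, the dangerous family satisfies $|\cD_k|<\delta_kh/2$ in each class. Summing with \eqref{eq:weight-sums} gives fewer than $\frac{43}{160}h$ dangerous gradients. The current point adds one dimension, and $1\le h/9$ because $h\ge9$. Hence
\[
 \Delta\le\frac7{60}+\frac{43}{160}+\frac19,
\]
with strict inequality whenever some class has a dangerous record. Otherwise the strictness comes from the other terms.

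\textbf{Bounding the affine charge.} The block $E_k$ has $m_k=2\,(\#\text{current class-}k\text{ restrictions})$ rows. Each active restriction has alive size $r_x(c)>s_k/2$, so the number of class-$k$ restrictions is less than $2\cE_k^{\mathrm{cur}}/s_k$. The charge is then
\[
 \sum_k\frac{m_k}{\alpha_kh}\le\sum_k\frac{4\cE_k^{\mathrm{cur}}}{s_k}\cdot\frac{100s_k}{1973h}
 =\frac{400}{1973h}\sum_k\cE_k^{\mathrm{cur}}\le\frac{400}{1973}
\]
by \eqref{eq:current-mass}.

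\textbf{Checking the budget.} It remains to verify the exact rational inequality
\[
 \frac{18}{61}+\frac1{65536}+\frac{400}{1973}<1-\frac7{60}-\frac{43}{160}-\frac19.
\]
Numerically the left side is about $0.2951+0.00002+0.2027\approx0.4978$. The right side is about $1-0.1167-0.2688-0.1111\approx0.5035$. The inequality therefore holds, though with a thin margin. This arithmetic is the step I expect to be the delicate one: the constants were clearly tuned so that it barely closes. It should be checked exactly over the rationals, along with the facts that every term counted in $\Delta$ enters with the stated bound and that empty blocks are omitted as in the corollary.

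A minor point is that the dangerous gradients and the large-row restrictions may be linearly dependent. This only decreases $\dim W$, so the bound on $\Delta$ is unaffected.
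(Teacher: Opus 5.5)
Your proposal is correct and follows the paper's proof. You apply \cref{cor:finite-row} on $\R^V$ with the same parameters, bound the blocking dimension by the large rows, the current point and \cref{lem:danger-count}, and bound the affine charge by $400/1973$ via \eqref{eq:current-mass}. The only difference is that you charge the current point as $1\le h/9$ where the paper uses $1<h/M$ (giving $\Delta<241/480$); your choice leaves the larger remaining budget $145/288$ in place of $239/480$, so your margin is in fact wider than the paper's.
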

\begin{proof}
Work in $\R^V$. Large rows cost at most $h/M$ dimensions.
The current point costs at most one. Since $h\ge9>M$, their combined
cost is strictly less than $2h/M=7h/30$. By
\cref{lem:danger-count}, the dangerous gradients cost at most
$h\sum_k\delta_k/2\le43h/160$. Thus the blocking subspace has
dimension less than $241h/480$.

Let $m_k$ be the number of signed rows in $E_k$. Each active unsigned
restriction has size greater than $s_k/2$, so
\[
 m_k\le\frac{4\cE_k^{\mathrm{cur}}}{s_k}.
\]
This inequality is valid for an empty class as well. By
\eqref{eq:current-mass},
\begin{equation}\label{eq:affine-charge}
 \sum_{k=0}^{K}\frac{m_k}{h\alpha_k}
 \le\frac{400}{1973h}\sum_k\cE_k^{\mathrm{cur}}
 \le\frac{400}{1973}.
\end{equation}
The remaining budget is strictly positive:
\begin{equation}\label{eq:exact-sdp-slack}
 \frac{239}{480}
 -\left(\frac{18}{61}+\frac1{65536}+\frac{400}{1973}\right)
 =\frac{9759761}{118311813120}>0.
\end{equation}
Apply \cref{cor:finite-row} with $\Delta=241/480$ and extend the
resulting matrix by zero outside $V$. Its trace bound makes it nonzero.
\end{proof}

Every hypothesis used to obtain $U$ concerns the predecessor state.
The next section selects a complete successor whose potential bound
ensures the same estimates at the next state.

\section{The finite trajectory and the endpoint}\label{sec:trajectory}
Fix $q>8$ and a prescribed depth $R\ge0$. Choose the finite cutoff and
class schedule as above. For each class let $h_{*,k}>0$ be the uniform
analytic bound in \eqref{eq:analytic-mesh}, and put
\begin{equation}\label{eq:common-mesh-bound}
 h_{\mathrm{an}}=\min_{0\le k\le K}h_{*,k}>0,
 \qquad d_b=\min_{1\le j\le q}(1-|b(j)|)>0.
\end{equation}
Both quantities are fixed before the trajectory begins.

\subsection{Preserving the complete state}
The state consists of the point, the current row restrictions, and the
historical records described in \cref{sec:restrictions,sec:capital}.
The inductive conditions are as follows. The point lies in the cube and
previously frozen coordinates have stayed fixed. Each original row is
large, active, or terminal according to the normalization rule. Its
coefficient and anchor records satisfy the affine row identity, and its
class labels increase. A current signed record is live with
$X=e^{\lambda_k Z}<e^{L_k}$, or dangerous with $X=e^{L_k}$.
Retired records retain their final value of $X$. All recorded
lifetimes satisfy \eqref{eq:lifetime}. Finally,
\begin{equation}\label{eq:inductive-potential}
 \Psi\le q.
\end{equation}
The birth-mass estimates and the current mass bound follow from these
records and the column norm hypothesis.

\begin{lemma}[Initialization]\label{lem:initialization}
For $0<\gamma<\min\{1/2,d_b\}$, normalization at $b$ produces a
state satisfying all these conditions.
\end{lemma}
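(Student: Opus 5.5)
The plan is to check the inductive conditions one by one at the initial state $x=b$, with $b\in(-1,1)^q$ after restriction to the fractional columns.

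\emph{Geometric conditions.} Since $\gamma<d_b$, we have $|b(j)|\le 1-d_b<1-\gamma$ for every $j$. Hence $V=[q]$, $h=q$, and no coordinate is frozen; the cube condition is trivial, and $\gamma<1/2$ is the standing mesh assumption. Every row with $r_b(a_i)>M$ is declared large and blocked; a large row has zero accumulated discrepancy at $b$, trivially. Every other row has size at most $M=s_0$, so it is normalized starting at class $p=0$. By \cref{lem:normalization} this produces finitely many peels with anchor $b$, at most one active restriction born at $u=b$ satisfying \eqref{eq:restriction-birth}, or a terminal residual of squared norm at most $s_{K+1}$. The class labels increase strictly and the coefficients are original entries, which is \cref{lem:one-birth} at time zero.

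\emph{Row identity and lifetimes.} For each normalized row, the exact splitting $a_i=\sum t+c$ (or $\sum t+{}$residual) with common anchor $b$ gives \eqref{eq:row-identity} for every $y\in\cF(b,[q])$. There are no completed records, so $\cC_i=\emptyset$. Each current restriction has $Z_c^\varepsilon(b)=0$, because both terms of \eqref{eq:regularized} vanish at $x=u$. So \eqref{eq:lifetime} holds at its birth point.

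\emph{Exponential records and the potential.} Both signs of every active restriction start live with $X=1=e^{\lambda_kZ}$, and $1<e^{L_k}$ since $L_k>1$ by \cref{lem:schedule}. No record is dangerous or retired. Each born record contributes $q_a^{(j)}$ to both $\Phi^{(j,k)}$ and $I^{(j,k)}$, so $G^{(j,k)}=0$ for all $j,k$. Then
\[
 \Psi=q\sum_{k=0}^Kw_k\le q\sum_{k\ge0}w_k=q
\]
by \eqref{eq:weight-sums}. The birth-mass bound \eqref{eq:immigration} and the current-mass bound \eqref{eq:current-mass} follow from the column normalization together with the one-birth-per-class property.

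\emph{Main obstacle.} The delicate point is consistency of conventions. One must confirm that rows entering normalization at $b$ count as having their entry step at $b$, with no prior blocked steps, so that the initialization clause in the proof of \cref{lem:row-identity} applies verbatim. One must also check that a restriction cannot be born already at its retirement boundary, which holds because birth requires size $>s_k/2$. Everything else is bookkeeping.
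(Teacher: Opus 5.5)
Your proposal is correct and follows the paper's proof. You check the same items in the same order: all coordinates alive since $\gamma<d_b$, large rows blocked, the normalization splitting anchored at $b$ giving the row identity, $Z=0$ and $X=1$ so every record is live and none is dangerous, $G^{(j,k)}=0$, and $\Psi_0=q\sum_{k\le K}w_k\le q$. You also make explicit a few points the paper leaves implicit, namely $X=1<e^{L_k}$ and that a newly born restriction is not already at its retirement boundary.
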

\begin{proof}
All $q$ coordinates are alive. Leave the large rows blocked and normalize
every other row at $b$. The normalization identities initialize the row
records. Each birth has $Z=0$ and $X=1$, so every new signed record is
live and the dangerous families are empty. There are no completed
lifetimes. The initial exponential sum equals the birth mass, hence $G^{(j,k)}=0$.
The weight sum gives $\Psi_0=q\sum_{k=0}^{K}w_k\le q$.
\end{proof}

\begin{proposition}[A complete successor preserving the invariant]
\label{prop:successor}
Suppose a state satisfies the inductive conditions and has
$h>N_R(q)$ alive coordinates. If $0<\gamma\le h_{\mathrm{an}}$,
it has a complete successor satisfying the same conditions and
\begin{equation}\label{eq:selected-step}
 \Psi^+\le\Psi,\qquad \norm{x^+}_2^2=\norm x_2^2+\gamma^2.
\end{equation}
\end{proposition}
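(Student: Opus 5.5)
We first obtain a feasible covariance and then pick one good atom by averaging. Since the state satisfies the inductive conditions and $\Psi\le q$ with $h>N_R(q)$, \cref{lem:danger-count} gives $|\cD_k|<\delta_kh/2$ in every class. \Cref{prop:feasibility} then produces a nonzero $U\succeq0$ supported on $V\times V$. It annihilates the large-row restrictions, the dangerous moving gradients and $x|_V$, and it satisfies \eqref{eq:chosen-coordinate} and \eqref{eq:chosen-affine}. Applying \cref{lem:direction-law} to $U$ yields a finite centrally symmetric law of unit directions $v\in\range U$, each supported on $V$ and orthogonal to every blocked vector, with covariance $Q=U/\Tr U$. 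Because $Q$ is a positive multiple of $U$, the inequalities $Q\preceq\eta\diag Q$ and $E_kQE_k^{\mathsf T}\preceq\alpha_k\diag(E_kQE_k^{\mathsf T})$ still hold. Passing to the principal submatrix indexed by the old live signed records in class $k$ gives $C\preceq\alpha_k\diag C$.

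Next we check the one-step estimate for every class and every column. Fix $k\le K$ and $j\le q$, and consider the old live records of class $k$. By \eqref{eq:restriction-birth} they have $\norm{c_a}_2^2\le s_k$ and $\norm{c_a}_\infty\le s_k/B_k$. Liveness gives $X_a=e^{\lambda_kZ_a}<e^{L_k}$, and \cref{lem:immigration} bounds their column weight by $2/s_k$. The support condition holds because $V=\{p:|x(p)|<1-\gamma\}$, and $\gamma\le h_{\mathrm{an}}\le h_{*,k}$. Hence \cref{thm:local-estimate} gives $\E\exp(\Delta G^{(j,k)}/K_k)\le1$. Here $\Delta G^{(j,k)}$ is computed for the complete successor of each atom, formed in the order of \cref{sec:event}. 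Multiplying by $w_ke^{G^{(j,k)}/K_k}$ and summing over the finitely many $(j,k)$ gives $\E\Psi^+\le\Psi$. This includes columns that are frozen or carry no live records: for those, \eqref{eq:event-equality} gives $\Delta G^{(j,k)}\le0$ pointwise. Since the law is finite, some positive-weight atom $v$ has $\Psi^+(v)\le\Psi$, and we select it, for instance the first such atom in a fixed ordering. Because $v$ is orthogonal to $x|_V$, \cref{lem:clock} gives $x^+\in[-1,1]^q$, frozen coordinates unchanged, and $\norm{x^+}_2^2=\norm x_2^2+\gamma^2$.

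Finally we verify that the selected successor satisfies every inductive condition.
\begin{itemize}
\item Rows that are still large remain blocked, so their increment is zero. A row that becomes small enters normalization at $x^+$ with zero accumulated discrepancy.
\item Retirements and the resulting renormalizations follow \cref{lem:normalization,lem:one-birth}, so class labels increase and each row has at most one birth per class. The affine row identity carries over by the induction step in the proof of \cref{lem:row-identity}.
\item The signed status update is \eqref{eq:danger-update}. A surviving live record that does not cross keeps $X^+=e^{\lambda_kZ^+}<e^{L_k}$, because $Z^+<B_k$ and no cap is applied. A surviving crossing gets $X^+=e^{L_k}$ exactly. Dangerous and retired records keep their values, and new births have $X=1$ and $Z=0$.
\item The lifetime bound \eqref{eq:lifetime} persists by the argument of \cref{lem:lifetime}. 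Before crossing, each step raises $Z$ by at most $2\gamma\sqrt{s_k}$. After the mark is set, the blocked gradient makes the exact increment \eqref{eq:exact-increment} nonpositive for the chosen $v$.
\end{itemize}

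The main obstacle is the transfer from the smooth predecessor expression to the actual complete successor. Freezing, retirement and births all depend on $v$, and \cref{thm:local-estimate} is stated only for a fixed predecessor. We handle this through the pointwise comparison \eqref{eq:actual-smooth-comparison}, which rests on three facts:
\begin{itemize}
\item births contribute exactly zero to $G$;
\item retiring live records receive their last update before archival;
\item records that are not live stay fixed.
\end{itemize}
Once these are confirmed atom by atom, the averaging argument of the second paragraph applies without change.
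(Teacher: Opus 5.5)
Your proposal is correct and follows essentially the same route as the paper. It obtains a feasible covariance from \cref{prop:feasibility} and its finite symmetric law, applies \cref{thm:local-estimate} to every column--class pair via the pointwise comparison with the actual complete successor, averages to select an atom with $\Psi^+\le\Psi$, and then verifies the record, lifetime and row-identity conditions atom by atom. One side remark is wrong but not needed: a column that froze after a record's birth can still have $c_a(j)\neq0$, so $\Delta G^{(j,k)}$ need not be nonpositive there; your main argument already applies \cref{thm:local-estimate} to every $j\in[q]$, alive or frozen, so $\E\Psi^+\le\Psi$ is unaffected.
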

\begin{proof}
By \cref{prop:feasibility}, the predecessor has a feasible covariance.
Use its finite centrally symmetric law and form the complete successor
of \cref{sec:event} for each atom. Cube preservation and permanent
freezing hold for every atom by \cref{lem:clock}. All large rows and
old dangerous gradients are blocked by that same covariance. Therefore
the lifetime bound holds through the new endpoint, including a first
crossing or a simultaneous retirement. Normalization archives completed
intervals and starts new ones at that endpoint, so it preserves the row
identity and increasing labels.

For an old live record that survives below threshold, the cap is inactive
and $X^+=Xe^{\lambda_k\Delta Z}=e^{\lambda_k Z^+}<e^{L_k}$.
At a surviving crossing its exponential is exactly $e^{L_k}$ and its new
status is dangerous. An old dangerous record retains its mark and
exponential value while its regularized discrepancy cannot increase. A retiring
record keeps the final exponential value just computed. Every new birth has
$Z=0$ and $X=1$. These statements establish the record conditions for
every provisional successor.

For each $j,k$, \cref{thm:local-estimate} applies to the actual
complete update made from the old live records. The predecessor value
$G^{(j,k)}$ is fixed, so
\[
 \E\bigl[w_k e^{G^{+(j,k)}/K_k}\bigr]
 =w_ke^{G^{(j,k)}/K_k}
       \E e^{(G^{+(j,k)}-G^{(j,k)})/K_k}
 \le w_ke^{G^{(j,k)}/K_k}.
\]
Sum over the same finite column and class sets to obtain
$\E\Psi^+\le\Psi$. At least one positive-weight atom has
$\Psi^+\le\Psi$. Its complete successor already has every other
inductive property. The squared-norm identity holds for that atom as well.
\end{proof}

All the column--class estimates use the same direction law at this
predecessor. We sum their expected contributions before selecting the
atom. Individual quantities $G^{(j,k)}$ may increase on the selected
step; the bound on their aggregate potential $\Psi$ controls the
dangerous-family count at the next state. The geometric and record identities
already hold for every provisional successor, so the chosen atom
preserves the complete invariant.

\Needspace{14\baselineskip}
\begin{theorem}[Finite prescribed-depth trajectory]\label{thm:trajectory}
If $0<\gamma<\min\{1/2,d_b\}$ and
$\gamma\le h_{\mathrm{an}}$, there is a finite trajectory starting
at $b$ and ending at a state $x$ with at most $N_R(q)$ alive
coordinates. Its number of steps is at most
\[
 \left\lfloor\frac{q-\norm b_2^2}{\gamma^2}\right\rfloor.
\]
At its endpoint,
\begin{equation}\label{eq:raw-endpoint}
 \norm{A(x-b)}_\infty
 \le\frac72\sum_{k=0}^{K}B_k(R)+2+21\gamma.
\end{equation}
\end{theorem}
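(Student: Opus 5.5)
The plan is to build the trajectory by iterating \cref{prop:successor}, and to bound its length with the squared-norm clock from \cref{lem:clock}.

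\textbf{Start.} By \cref{lem:initialization}, the hypothesis $0<\gamma<\min\{1/2,d_b\}$ gives an initial state at $b$ satisfying all inductive conditions, including $\Psi_0\le q$.

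\textbf{Step rule.} Suppose the current state still has $h>N_R(q)$ alive coordinates. Then \cref{prop:successor}, which applies because $\gamma\le h_{\mathrm{an}}$, gives a complete successor. This successor again satisfies every inductive condition, has $\Psi^+\le\Psi\le q$, and obeys $\norm{x^+}_2^2=\norm{x}_2^2+\gamma^2$. If instead $h\le N_R(q)$, we stop. Since the invariant is fully restored at each state, the hypotheses of \cref{lem:danger-count} and \cref{prop:feasibility} hold again whenever another step is needed.

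\textbf{Termination.} After $t$ steps the clock gives $\norm{x_t}_2^2=\norm b_2^2+t\gamma^2$. Every point stays in the cube, so $\norm{x_t}_2^2\le q$. Hence
\[
t\gamma^2\le q-\norm b_2^2,
\]
and since $t$ is an integer, $t\le\lfloor (q-\norm b_2^2)/\gamma^2\rfloor$. So the rule cannot continue indefinitely. It must reach a state at which the stopping condition holds, meaning at most $N_R(q)$ alive coordinates, within that many steps.

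\textbf{Endpoint bound.} Apply \cref{cor:row-bound} at the final state. This is legitimate because the inductive conditions hold there: the affine row identity, the increasing labels, the blocking of large rows and dangerous gradients through every step, and the lifetime bound \eqref{eq:lifetime} for every recorded interval. The corollary then gives \eqref{eq:raw-endpoint} with the thresholds $B_k(R)$ from \eqref{eq:thresholds}.

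\textbf{Main subtlety.} No single step is a real obstacle; the care lies in the invariant bookkeeping.
\begin{itemize}
\item The row-bound corollary requires that every dangerous gradient was blocked on every step. This is guaranteed because each step's covariance comes from \cref{prop:feasibility} at the predecessor state.
\item A coordinate frozen at the mesh $\gamma$ counts as non-fractional only for the alive count. Converting the remaining buffered coordinates to true $\pm1$ values is left to the later rounding section, so it is not needed here.
\item The mesh requirements $\gamma\le h_{\mathrm{an}}$ and $\gamma<d_b$ are fixed before the trajectory begins. Therefore the same $\gamma$ serves at every step, and no mesh refinement is required along the way.
\end{itemize}
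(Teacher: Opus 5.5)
Your proposal is correct and follows the paper's proof: initialize by \cref{lem:initialization}, apply \cref{prop:successor} at every state with more than $N_R(q)$ alive coordinates, and bound the number of steps by combining $\norm{x_t}_2^2=\norm b_2^2+t\gamma^2$ with $\norm{x_t}_2^2\le q$. Then apply \cref{cor:row-bound} to the actual history at the stopping state to obtain \eqref{eq:raw-endpoint}.
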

\begin{proof}
Initialize by \cref{lem:initialization}. At every state with more
than $N_R(q)$ alive coordinates, choose the successor from
\cref{prop:successor}. After $t$ steps,
\[
 \norm{x_t}_2^2=\norm b_2^2+t\gamma^2\le q.
\]
A trajectory that had not stopped after the displayed integer number
of steps would admit one more step by the same proposition, contradicting
this inequality. Thus a stopping state exists within that bound.
Apply \cref{cor:row-bound} to its actual history to obtain
\eqref{eq:raw-endpoint}.
\end{proof}

At this endpoint, round each buffered frozen coordinate to its sign and
leave each alive coordinate unchanged. Call the resulting point $y$.
The number of fractional coordinates of $y$ is at most $N_R(q)$.
Every altered coordinate changes by at most $\gamma$, and
$|a_i(j)|\le1$ by the column norm assumption. Hence
\begin{equation}\label{eq:buffered-rounding}
 \norm{A(y-b)}_\infty
 \le\frac72\sum_{k=0}^{K}B_k(R)+2+(q+21)\gamma.
\end{equation}
The original fixed signs, omitted from the working matrix, are then
restored without changing this increment.

\subsection{The threshold sum and finite attainment}
\label{sec:finite-attainment}
\begin{proposition}[Threshold sum]\label{prop:threshold-sum}
For the schedule in \eqref{eq:thresholds},
\begin{equation}\label{eq:threshold-sum}
 \sum_{k=0}^{\infty}B_k(0)
 <\frac{683637613}{10^6}<\frac{17091}{25}.
\end{equation}
For every $R\ge0$ and every finite cutoff,
\begin{equation}\label{eq:depth-sum}
 \sum_{k=0}^{K}B_k(R)
 \le\frac{17091}{25}(1+R)^{1/4}.
\end{equation}
\end{proposition}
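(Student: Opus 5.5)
The bound reduces to two steps: a depth-comparison inequality that turns the general-$R$ sum into the $R=0$ sum, and an explicit numerical evaluation of $\sum_k B_k(0)$ with a certified geometric tail.

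\textbf{Depth reduction.} Write $B_k(R)=P_k(R)+S_k$, where
\[
 P_k(R)=\Bigl(c\,\tfrac{s_kL_k(L_k+R)}{\delta_k}\Bigr)^{1/4},\qquad
 c=\tfrac{1925648}{675},\qquad S_k=\sqrt{\tfrac{244}{9}s_kL_k}.
\]
By \cref{lem:schedule}, $L_k>1$, so for $R\ge0$
\[
 L_k+R\le L_k(1+R),\qquad\text{hence}\qquad P_k(R)\le(1+R)^{1/4}P_k(0).
\]
The summand $S_k$ does not depend on $R$, and $(1+R)^{1/4}\ge1$. Therefore $B_k(R)\le(1+R)^{1/4}B_k(0)$ for every $k$. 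Since every term is positive, any finite partial sum is at most the full series. So \eqref{eq:depth-sum} follows from \eqref{eq:threshold-sum}, and everything reduces to bounding $\sum_{k\ge0}B_k(0)$.

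\textbf{Explicit sum at $R=0$.} Here $B_k(0)=(c\,s_kL_k^2/\delta_k)^{1/4}+S_k$. The explicit ingredients are:
\begin{itemize}
\item $s_k=M2^{-k}$, with $M=60/7$;
\item $\delta_k=\tfrac{43}{80}w_k$;
\item $e^{L_0}=125440/2193$, $e^{L_1}=2132480/15867$, and $L_k=L_1+(k-1)\log(34/15)$ for $k\ge1$;
\item $s_k/\delta_k=(s_1/\delta_1)(17/30)^{k-1}$ for $k\ge1$.
\end{itemize}

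I would split the sum at a cutoff $k_0$, say $k_0\approx 60$, and handle the two pieces separately.

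\emph{Head ($k<k_0$).} Bound each term above by rational upper bounds for the logarithms, the square roots and the fourth roots. Interval arithmetic suffices here, and it is what the Lean certificate would encode.

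\emph{Tail ($k\ge k_0$).} Use $L_k\le L_1+(k-1)\ell$, where $\ell$ is a rational upper bound for $\log(34/15)$. For $k\ge k_0$, the factors $\sqrt{L_k}$ and $L_k^{1/2}$ grow so slowly against the geometric factors $(17/30)^{(k-1)/4}$ and $2^{-(k-1)/2}$ that the ratio of consecutive terms is at most a fixed $\varrho<1$. Concretely, with
\[
 \varrho=(17/30)^{1/4}\Bigl(1+\tfrac{\ell}{L_{k_0}}\Bigr)^{1/2}<1,
\]
the tail of the fourth-root part is at most its first term divided by $1-\varrho$. The square-root part is handled the same way with ratio $2^{-1/2}(1+\ell/L_{k_0})^{1/2}$.

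Adding the head and the tail should give a number below $683.637613$. The final comparison $683637613/10^6<17091/25=683.64$ is immediate.

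\textbf{Main obstacle.} The hard part is the numerical certification. The margin between the true sum and $683.637613$ is presumably tiny, given how many digits are stated. So the rational bounds for the logarithms and roots must be tight, roughly to $10^{-9}$ relative accuracy, and the tail cutoff has to be placed late enough that the geometric-ratio bound loses little. I would first compute the sum in floating point to see the actual slack. I would then pick $k_0$ and the precision of the rational enclosures for $\log(34/15)$, $\log e^{L_0}$, $\log e^{L_1}$ and the roots accordingly, and verify the head by exact rational inequalities of the form $x^4\le y$ for the fourth roots and $x^2\le y$ for the square roots.
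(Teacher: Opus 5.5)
Your proposal follows the paper's proof. The depth reduction through $L_k>1$ is the same argument, and the $R=0$ bound is certified the same way: rational enclosures of $L_0$, $L_1$ and $d=\log(34/15)$, rational root comparisons on a finite head, and geometric tails from the exact ratios $(P_{k+1}(0)/P_k(0))^4=\tfrac{17}{30}(1+d/L_k)^2$ and $(S_{k+1}/S_k)^2=\tfrac12(1+d/L_k)$. The paper takes the head through $k=127$ on the grid $10^{-6}$ and bounds both tails from $k=128$ with the ratios $7/8$ and $3/4$.

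One quantitative caution about your tentative $k_0=60$. By my estimate, the single-ratio tail bound there exceeds the true fourth-root tail by about $1.3\times10^{-3}$. That still fits below $17091/25$, but not below the intermediate constant $683637613/10^6$, which the true sum undercuts by less than $2\times10^{-4}$. So, as you anticipated, the cutoff must move later; at $k_0=128$ the tail loss is negligible.
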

\begin{proof}
The rational certificate for \eqref{eq:threshold-sum} is proved in
\cref{app:certificate}. Since $L_k>1$,
$L_k(L_k+R)\le L_k^2(1+R)$. Thus the fourth-root summand of
$B_k(R)$ is at most its depth-zero value times $(1+R)^{1/4}$.
The square-root summand is independent of $R$ and satisfies the same
bound because $(1+R)^{1/4}\ge1$. Sum the resulting inequality.
\end{proof}

The numerical bound leaves a positive reserve:
\begin{equation}\label{eq:numerical-reserve}
 \frac72\frac{17091}{25}+2
 =2395-\frac{13}{50}.
\end{equation}
It pays for the buffered rounding and the finite-mesh overshoot without
passing to a limiting trajectory.

\begin{proof}[Proof of \cref{thm:partial}]
If $q\le8$, take $y=b$. Suppose $q>8$ and use the working matrix
on its fractional columns. Fix $R$ and the finite class schedule first.
Choose
\begin{equation}\label{eq:final-mesh}
 \gamma=\min\left\{\frac{d_b}{2},\frac{h_{\mathrm{an}}}{2},
                    \frac{13}{50(q+21)}\right\}>0.
\end{equation}
The first two terms imply the hypotheses of \cref{thm:trajectory}.
By \eqref{eq:buffered-rounding}, \cref{prop:threshold-sum}, and
\eqref{eq:numerical-reserve}, with $z=(1+R)^{1/4}\ge1$,
\begin{align*}
 \norm{A(y-b)}_\infty
 &\le\frac72\frac{17091}{25}z+2+(q+21)\gamma\\
 &\le\left(2395-\frac{13}{50}\right)z+\frac{13}{50}
 \le2395z.
\end{align*}
The stopping rule gives the stated fractional count, and restoring the
omitted coordinates preserves every original sign.
\end{proof}

\subsection{Euclidean completion}\label{sec:completion}
\begin{lemma}\label{lem:euclidean-rounding}
Let $A\in\R^{m\times n}$ have columns of Euclidean norm at most one,
and let $y\in[-1,1]^n$ have at most $r$ fractional coordinates. There is
a signing $\sigma$ preserving its fixed coordinates such that
\begin{equation}\label{eq:euclidean-rounding}
 \norm{A(\sigma-y)}_2\le\sqrt r,
 \qquad \norm{A(\sigma-y)}_\infty\le\sqrt r.
\end{equation}
\end{lemma}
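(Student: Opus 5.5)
We prove the Euclidean bound and then pass to the $\ell_\infty$ bound via $\norm{w}_\infty\le\norm{w}_2$. Let $F$ be the set of fractional coordinates of $y$, with $|F|\le r$, and write $A_F$ for the submatrix on these columns. Every signing $\sigma$ that agrees with $y$ outside $F$ satisfies $A(\sigma-y)=A_F(\sigma_F-y_F)$. It therefore suffices to find $\sigma_F\in\{-1,1\}^F$ with $\norm{A_F(\sigma_F-y_F)}_2^2\le r$. If $F=\emptyset$, take $\sigma=y$.

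We use independent randomized rounding, which is the standard averaging argument. For each $j\in F$, set $\sigma(j)=1$ with probability $(1+y(j))/2$ and $\sigma(j)=-1$ otherwise, independently over $j$. Then $\E\sigma(j)=y(j)$, and the centered variables $\sigma(j)-y(j)$ are independent with variance $1-y(j)^2\le1$. Expanding the square, the cross terms vanish in expectation, so
\[
 \E\norm{A_F(\sigma_F-y_F)}_2^2
 =\sum_{j\in F}(1-y(j)^2)\sum_i A_{ij}^2
 \le\sum_{j\in F}1\le r,
\]
by the column norm hypothesis. The law is finite, with at most $2^{|F|}$ atoms, so at least one atom attains a value no larger than the mean. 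That atom is the required signing, and it preserves the fixed coordinates by construction.

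Applying $\norm{\cdot}_\infty\le\norm{\cdot}_2$ then gives the second inequality in \eqref{eq:euclidean-rounding}. The argument has no real obstacle. The only points to check are that the cross terms vanish by independence and centering, and that the expectation is a finite average, so that an atom attaining at most the mean exists, just as in the atom selection of \cref{prop:successor}.
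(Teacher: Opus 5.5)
Your proof is correct and follows the paper's argument: independent rounding with $\E\sigma(j)=y(j)$, vanishing cross terms giving $\E\norm{A(\sigma-y)}_2^2=\sum_{j}(1-y(j)^2)\norm{A_{\cdot j}}_2^2\le r$, a choice of an atom of the finite law at or below the mean, and then $\norm{\cdot}_\infty\le\norm{\cdot}_2$. Your explicit probabilities $(1+y(j))/2$ and the empty-face case are harmless additions.
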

\begin{proof}
Round the fractional coordinates independently, choosing
$\sigma(j)\in\{-1,1\}$ with mean $y(j)$. The centered coordinate
increments are independent, so their mixed second moments vanish. Thus
\[
 \E\norm{A(\sigma-y)}_2^2
 =\sum_{j\text{ fractional}}(1-y(j)^2)\norm{A_{\cdot j}}_2^2
 \le r.
\]
The finite average has an outcome with squared norm at most $r$.
The $\ell_\infty$ estimate follows from the Euclidean estimate.
\end{proof}

\begin{proof}[Proof of \cref{cor:completion,thm:main}]
For $q>8$, take $R=\logplus(q/9)$. Since $q$ is an integer, it
satisfies $q\ge9$ and $qe^{-R}=9$, so $N_R(q)=8$.
Apply \cref{thm:partial}, followed by \cref{lem:euclidean-rounding}
with $r=8$. This gives
\[
 \norm{A(\sigma-b)}_\infty
 \le2395\left(1+\logplus\frac q9\right)^{1/4}+2\sqrt2.
\]
If $q\le8$, direct Euclidean rounding gives the stronger bound
$\sqrt q$. In both cases existing signs are preserved.
Taking $b=0$ gives \cref{thm:main}, with $q=n$. An empty set of
columns has discrepancy zero.
\end{proof}

\appendix
\section{A uniform weighted fourth-derivative estimate}\label[appendix]{app:derivatives}
This appendix proves \cref{lem:weighted-fourth}, including explicit
constants. The weights may vanish and the weighted energy may be zero.
The estimate therefore applies uniformly to the live-record families
encountered in the construction.

Put
\begin{align*}
 D_0&=P_0+Q_0,&
 h_b&=(1+D_0+3\sigma D_0)^{-1},&
 A_0&=P_0+2Q_0,\\
 L_2&=2+8Q_0,&
 m_1&=3\sigma A_0,&
 m_2&=3\sigma(A_0^2+2Q_0),\\
 b_2&=3(L_2+2),&
 b_3&=3A_0(L_2+6),&
 b_4&=3\bigl((A_0^2+12Q_0)L_2+12Q_0\bigr),\\
 c_1&=9\sigma L_2.
\end{align*}
One admissible constant is
\begin{equation}\label{eq:C4-explicit}
 C_4=3\bigl(b_4+4m_1b_3+3m_2b_2+6m_1^2b_2+c_1m_1^2\bigr).
\end{equation}
All constants are nonnegative and $h_b>0$.

\begin{proof}[Proof of \cref{lem:weighted-fourth}]
For one index, abbreviate $z(t)=pt-qt^2$ and $a(t)=p-2qt$.
On $|t|\le h_b\le1$,
\begin{equation}\label{eq:inner-basic}
 |z(t)|\le D_0|t|<1,\qquad |a(t)|\le A_0,
 \qquad a(t)^2\le L_2(p^2+q).
\end{equation}
The last inequality follows from
$a(t)^2\le2p^2+8q^2t^2\le2p^2+8Q_0q$.
The mean value theorem and $e^{|z|}<3$ give
\[
 |J(t)|\le3\sum_a\omega_a|p_at-q_at^2|
 \le3\sigma D_0|t|<1.
\]
Thus $e^{z(t)}<3$ and $e^{J(t)}<3$ throughout this interval.
The statements remain valid when $D_0=0$, in which case the inner
exponents are identically zero.

Successive differentiation gives
\begin{align}
 (e^z)'&=ae^z,&
 (e^z)''&=(a^2-2q)e^z,\notag\\
 (e^z)'''&=(a^3-6qa)e^z,&
 (e^z)^{(4)}&=(a^4-12qa^2+12q^2)e^z.
 \label{eq:inner-derivatives}
\end{align}
Write $W=\sum_a\omega_a(p_a^2+q_a)$. Using
\eqref{eq:inner-basic} in \eqref{eq:inner-derivatives} yields
\begin{align}
 |J'|&\le m_1,& |J''|&\le m_2,\notag\\
 |J''|&\le b_2W,& |J'''|&\le b_3W,&
 |J^{(4)}|&\le b_4W.
 \label{eq:inner-weighted-bounds}
\end{align}
For example, $|a^3-6qa|\le A_0(L_2+6)(p^2+q)$,
and
\[
 |a^4-12qa^2+12q^2|
 \le\bigl((A_0^2+12Q_0)L_2+12Q_0\bigr)(p^2+q).
\]
A weighted Cauchy--Schwarz inequality gives an additional estimate:
\begin{equation}\label{eq:first-derivative-square}
 |J'|^2
 \le\left(\sum_a\omega_a\right)
          \sum_a\omega_a a_a(t)^2e^{2z_a(t)}
 \le9\sigma L_2W=c_1W.
\end{equation}

The fourth derivative of the outer exponential is
\begin{equation}\label{eq:outer-fourth}
 (e^J)^{(4)}
 =e^J\bigl(J^{(4)}+4J'J'''+3(J'')^2
                     +6(J')^2J''+(J')^4\bigr).
\end{equation}
Bound the five terms in order by
\[
 b_4W,\quad 4m_1b_3W,\quad 3m_2b_2W,\quad
 6m_1^2b_2W,\quad c_1m_1^2W.
\]
The factor $e^J<3$ proves \eqref{eq:weighted-fourth} with
\eqref{eq:C4-explicit}. Every estimate is proportional to $W$;
there is no division by it.
\end{proof}

For the parameters used in \cref{sec:local}, $P_0,Q_0>0$ and
$\sigma>0$, so $C_4>0$ and the positive mesh in
\eqref{eq:analytic-mesh} is well defined. The constants depend only on
the chosen class, the prescribed depth, and the fixed geometric
parameters. They are independent of the number of live records, their
individual coefficients, and the eigenvalues of the selected covariance.

\section{The rational threshold-sum certificate}\label[appendix]{app:certificate}
We give the exact certificate used in \cref{prop:threshold-sum}. Its
logarithm enclosures and root comparisons require only rational
arithmetic, and the infinite remainder is bounded by two geometric
series. The complete integer data appear in \cref{tab:roots}.

Write $B_k(0)=a_k+b_k$ for the fourth-root and square-root terms in
\eqref{eq:thresholds}. The exact logarithmic parameters are
\begin{equation}\label{eq:logarithm-identities}
\begin{aligned}
 L_0&=\log\frac{125440}{2193},&\qquad
 L_1&=\log\frac{2132480}{15867},\\
 d&=\log\frac{34}{15},&
 L_k&=L_1+(k-1)d\quad(k\ge1).
\end{aligned}
\end{equation}

\subsection{Rational logarithm enclosures}
For $1\le y\le2$, set $u=(y-1)/(y+1)$ and define
\[
 S_{16}(u)=2\sum_{t=0}^{15}\frac{u^{2t+1}}{2t+1},\qquad
 E_{16}(u)=\frac{2u^{33}}{33(1-u^2)}.
\]
The positive series for $\log y=2\operatorname{arctanh}u$ gives
\begin{equation}\label{eq:logarithm-enclosure}
 S_{16}(u)\le\log y\le S_{16}(u)+E_{16}(u).
\end{equation}
Indeed, every denominator in the omitted terms is at least $33$, so
their sum is bounded by the stated geometric series. For rational
$x\ge1$, write $x=2^e y$ with integer $e\ge0$ and $1\le y<2$.
Add $e$ copies of the corresponding bound for $\log2$, using $u=1/3$.
This proves the following enclosures, whose displayed decimals are exact
rationals:
\begin{center}
\begin{tabular}{@{}lrr@{}}
\toprule
Quantity & Lower bound & Upper bound\\
\midrule
$L_0$ & $4.046557087166$ & $4.046557087167$\\
$L_1$ & $4.900799419906$ & $4.900799419907$\\
$d$   & $0.818310323513$ & $0.818310323514$\\
\bottomrule
\end{tabular}
\end{center}
The verification of these enclosures consists of substituting each of
the three rational arguments into \eqref{eq:logarithm-enclosure} and
comparing fractions. Denote the resulting bounds by
$L_0^\pm,L_1^\pm,d^\pm$, and put
$L_k^\pm=L_1^\pm+(k-1)d^\pm$ for $k\ge1$.

\subsection{The finite root comparisons}
Let $g=10^6$ and $N=128$. For $0\le k\le N$, choose positive
integers $A_k,D_k$ satisfying
\begin{align}
 \left(\frac{A_k}{g}\right)^4
 &>\frac{1925648}{675}\frac{s_k(L_k^+)^2}{\delta_k},&
 \left(\frac{D_k}{g}\right)^2
 &>\frac{244}{9}s_kL_k^+.
 \label{eq:root-comparisons}
\end{align}
Then $a_k<A_k/g$ and $b_k<D_k/g$. The integers in
\cref{tab:roots} are the least choices for these rational upper
radicands. For a nonnegative rational $v$ and degree $\ell\in\{2,4\}$,
the least integer with $(a/g)^\ell>v$ is
\begin{equation}\label{eq:integer-root-construction}
 a=1+\left\lfloor\sqrt[\ell]{\left\lfloor g^\ell v\right\rfloor}
       \right\rfloor.
\end{equation}
Integer square roots, applied twice when $\ell=4$, evaluate this
formula exactly. Checking the displayed certificate only requires
raising each integer to the indicated power and comparing fractions.
The additional lower comparisons for $A_k-1,D_k-1$ verify minimality,
although only the upper comparisons are needed for the proof.

The table gives
\begin{equation}\label{eq:prefix-sums}
 \sum_{k=0}^{127}A_k=557621898,\qquad
 \sum_{k=0}^{127}D_k=126015679,\qquad A_{128}=4,\quad D_{128}=1.
\end{equation}
The complete certificate in machine-readable form and an exact rational
verifier are included as ancillary files. The same root comparisons and finite sums
are proved in the Lean project.

\Needspace{25\baselineskip}
\subsection{The two infinite tails}
For $k\ge1$, the schedule gives the exact ratios
\begin{equation}\label{eq:tail-ratios}
 \left(\frac{a_{k+1}}{a_k}\right)^4
 =\frac{17}{30}\left(1+\frac d{L_k}\right)^2,\qquad
 \left(\frac{b_{k+1}}{b_k}\right)^2
 =\frac12\left(1+\frac d{L_k}\right).
\end{equation}
They decrease with $k$, since $L_k$ increases. At $N=128$, rational
comparison using the logarithm enclosures gives
\begin{equation}\label{eq:certified-tail-ratios}
 \frac{17}{30}\left(1+\frac{d^+}{L_{128}^-}\right)^2
 <\left(\frac78\right)^4,\qquad
 \frac12\left(1+\frac{d^+}{L_{128}^-}\right)
 <\left(\frac34\right)^2.
\end{equation}
Therefore the two tails, each starting at index $128$, are bounded by
$8a_{128}$ and $4b_{128}$. Equations
\eqref{eq:root-comparisons} and \eqref{eq:prefix-sums} yield
\begin{align*}
 \sum_{k=0}^{\infty}B_k(0)
 &<\frac{557621898+126015679+8\cdot4+4\cdot1}{10^6}\\
 &=\frac{683637613}{10^6}<\frac{17091}{25}.
\end{align*}
This proves \eqref{eq:threshold-sum}, including the terms beyond every
possible finite cutoff.

\begin{table}[p]
\centering
\caption{Integer upper bounds with grid $g=10^6$. The first $128$ pairs form the prefix, and the pair at index $128$ starts both tail bounds.}
\label{tab:roots}
\small
\setlength{\tabcolsep}{5pt}
\renewcommand{\arraystretch}{1.04}
\begin{tabular*}{\textwidth}{@{\extracolsep{\fill}}rrr@{\hspace{2em}}rrr@{\hspace{2em}}rrr@{}}
\toprule
$k$ & $A_k$ & $D_k$ & $k$ & $A_k$ & $D_k$ & $k$ & $A_k$ & $D_k$\\
\cmidrule(lr){1-3}\cmidrule(lr){4-6}\cmidrule(lr){7-9}
0 & 49767931 & 30665010 & 43 & 348827 & 33 & 86 & 1072 & 1\\
1 & 47948321 & 23862654 & 44 & 305788 & 24 & 87 & 935 & 1\\
2 & 44940310 & 18227810 & 45 & 268004 & 17 & 88 & 816 & 1\\
3 & 41687607 & 13780294 & 46 & 234841 & 12 & 89 & 711 & 1\\
4 & 38366183 & 10336017 & 47 & 205742 & 9 & 90 & 621 & 1\\
\addlinespace[2pt]
5 & 35090213 & 7704487 & 48 & 180216 & 6 & 91 & 541 & 1\\
6 & 31932737 & 5714088 & 49 & 157828 & 5 & 92 & 472 & 1\\
7 & 28938804 & 4220311 & 50 & 138198 & 4 & 93 & 412 & 1\\
8 & 26134186 & 3106175 & 51 & 120989 & 3 & 94 & 359 & 1\\
9 & 23531329 & 2279379 & 52 & 105907 & 2 & 95 & 313 & 1\\
\addlinespace[2pt]
10 & 21133501 & 1668379 & 53 & 92690 & 2 & 96 & 273 & 1\\
11 & 18937723 & 1218440 & 54 & 81111 & 1 & 97 & 238 & 1\\
12 & 16936864 & 888101 & 55 & 70968 & 1 & 98 & 208 & 1\\
13 & 15121141 & 646200 & 56 & 62085 & 1 & 99 & 181 & 1\\
14 & 13479199 & 469461 & 57 & 54306 & 1 & 100 & 158 & 1\\
\addlinespace[2pt]
15 & 11998875 & 340588 & 58 & 47496 & 1 & 101 & 138 & 1\\
16 & 10667750 & 246783 & 59 & 41534 & 1 & 102 & 120 & 1\\
17 & 9473526 & 178611 & 60 & 36317 & 1 & 103 & 105 & 1\\
18 & 8404287 & 129137 & 61 & 31751 & 1 & 104 & 91 & 1\\
19 & 7448672 & 93279 & 62 & 27756 & 1 & 105 & 80 & 1\\
\addlinespace[2pt]
20 & 6595978 & 67319 & 63 & 24261 & 1 & 106 & 70 & 1\\
21 & 5836218 & 48545 & 64 & 21204 & 1 & 107 & 61 & 1\\
22 & 5160147 & 34981 & 65 & 18530 & 1 & 108 & 53 & 1\\
23 & 4559259 & 25189 & 66 & 16191 & 1 & 109 & 46 & 1\\
24 & 4025772 & 18127 & 67 & 14147 & 1 & 110 & 40 & 1\\
\addlinespace[2pt]
25 & 3552593 & 13037 & 68 & 12359 & 1 & 111 & 35 & 1\\
26 & 3133287 & 9371 & 69 & 10796 & 1 & 112 & 31 & 1\\
27 & 2762032 & 6733 & 70 & 9430 & 1 & 113 & 27 & 1\\
28 & 2433576 & 4835 & 71 & 8236 & 1 & 114 & 23 & 1\\
29 & 2143194 & 3470 & 72 & 7193 & 1 & 115 & 21 & 1\\
\addlinespace[2pt]
30 & 1886644 & 2490 & 73 & 6281 & 1 & 116 & 18 & 1\\
31 & 1660126 & 1786 & 74 & 5485 & 1 & 117 & 16 & 1\\
32 & 1460240 & 1280 & 75 & 4789 & 1 & 118 & 14 & 1\\
33 & 1283952 & 918 & 76 & 4181 & 1 & 119 & 12 & 1\\
34 & 1128556 & 657 & 77 & 3650 & 1 & 120 & 11 & 1\\
\addlinespace[2pt]
35 & 991640 & 471 & 78 & 3186 & 1 & 121 & 9 & 1\\
36 & 871063 & 337 & 79 & 2781 & 1 & 122 & 8 & 1\\
37 & 764919 & 242 & 80 & 2427 & 1 & 123 & 7 & 1\\
38 & 671519 & 173 & 81 & 2118 & 1 & 124 & 6 & 1\\
39 & 589364 & 124 & 82 & 1849 & 1 & 125 & 6 & 1\\
\addlinespace[2pt]
40 & 517127 & 89 & 83 & 1613 & 1 & 126 & 5 & 1\\
41 & 453631 & 64 & 84 & 1408 & 1 & 127 & 4 & 1\\
42 & 397838 & 46 & 85 & 1228 & 1 & 128 & 4 & 1\\
\bottomrule
\end{tabular*}
\end{table}
\clearpage

\section{The Lean formalization}\label[appendix]{app:formalization}
The Lean project proves the endpoint theorems in
\cref{sec:introduction} from the explicit parameter
\texttt{BansalJiangA4}, the covariance statement in \cref{thm:source}.
The formal derivation includes its finite-row consequence, the historical
states and finite path, the numerical schedule bound, and final rounding.

\subsection{The final declarations and their scope}
The file \path{KomlosQuarter/R37/FinalTheorem.lean} contains the
following declarations in the namespace \texttt{KomlosQuarter.R37}:
\begin{center}
\small
\renewcommand{\arraystretch}{1.12}
\begin{tabularx}{\textwidth}{@{}>{\raggedright\arraybackslash}p{0.36\textwidth}>{\raggedright\arraybackslash}X@{}}
\toprule
Declaration & Mathematical statement\\
\midrule
\texttt{partial\_coloring\_2395} &
The prescribed-depth endpoint in \cref{thm:partial}, preserving
original signs and controlling the number of fractional coordinates.\\[3pt]
\texttt{full\_coloring\_2395} &
The arbitrary-start signing bound in \cref{cor:completion}.\\[3pt]
\texttt{komlos\_quarter\_2395} &
The zero-start matrix discrepancy bound in \cref{thm:main}.\\
\bottomrule
\end{tabularx}
\end{center}
Matrices have arbitrary finite row and column index types, real entries,
and $\sum_i A(i,j)^2\le1$ for each column. The partial endpoint uses
nested square roots, proved equal to the real power $x^{1/4}$ for
$x\ge0$; the full statement uses that power. The stopping target is
the natural-number form of $\max\{\lceil qe^{-R}\rceil-1,8\}$,
including the empty fractional face.

For an arbitrary starting point, A.4 is assumed on every alive
coordinate space of its fractional face. The zero-start declaration
instantiates the universal finite-dimensional A.4 statement on those
spaces. A.4 is the sole external research theorem parameter; the
remaining ingredients are proved within the project.

\subsection{Correspondence with the written proof}
The principal groups of modules are listed below.\\
Module names are
relative to \texttt{KomlosQuarter/R37/}.\par
{\small
\renewcommand{\arraystretch}{1.10}
\begin{longtable}{@{}>{\raggedright\arraybackslash}p{0.36\textwidth}>{\raggedright\arraybackslash}p{\dimexpr0.64\textwidth-2\tabcolsep\relax}@{}}
\toprule
Part of the paper & Principal modules\\
\midrule
\endfirsthead
\toprule
Part of the paper & Principal modules\\
\midrule
\endhead
\bottomrule
\endfoot
\bottomrule
\endlastfoot
The source theorem and finite-row reduction &
\path{SDPSource.lean}, \path{AliveSDP.lean},
\path{CurrentSDP.lean}, \path{LiveSDP.lean}\\[4pt]
The analytic bound for actual historical records &
\path{StateStep.lean}, \path{SDPStep.lean}, and their signed
Taylor and covariance dependencies\\[4pt]
Dangerous counts and preservation of the schedule invariant &
\path{ScheduledCounts.lean}, \path{ScheduledStep.lean},
\path{ScheduledInvariant.lean}\\[4pt]
Initialization, finite paths, and the stopping count &
\path{GivenInitialization.lean}, \path{GivenMesh.lean},
\path{GivenPath.lean}, \path{TrackedPath.lean},
\path{StoppingDepth.lean}\\[4pt]
The row identity and the endpoint estimate &
\path{SectionLedger.lean}, \path{SectionStep.lean},
\path{PeelBudget.lean}, \path{LedgerMemory.lean},
\path{CoreLedger.lean}, \path{NumericalLedger.lean}\\[4pt]
The rational schedule and finite reserve &
\path{ScheduleLogs.lean}, \path{ScheduleMajorants.lean},
\path{RootCertificate.lean}, \path{ScheduleTail.lean},
\path{FiniteAttainment.lean}\\[4pt]
Return to the ambient face and final rounding &
\path{BufferedRounding.lean}, \path{AmbientEndpoint.lean},
\path{EuclideanRounding.lean}, \path{FinalTheorem.lean}\\
\end{longtable}
}

\Needspace{9\baselineskip}
Two implementation choices account for differences in the intermediate
formulas. The formal finite-row reduction may repeat a
block more times than the least choice in \cref{cor:finite-row};
testing on repeated vectors recovers the same coefficient. The formal
schedule uses $q+4$ classes, indexed from zero, which is the sufficient
cutoff $K=q+3$ admitted in \cref{sec:restrictions}. Every estimate
above is uniform over that finite cutoff. The formal mesh is chosen
from sufficient positive upper bounds; its proof uses exactly the
reserve in \eqref{eq:numerical-reserve}.

\subsection{Reproduction}
The project is pinned to Lean \texttt{4.33.1} and to the mathlib revision
\begin{center}
\texttt{0df444a360eaa60ab8c11dca51a86af692955474}.
\end{center}
The ancillary directory contains the source files, the toolchain and
package manifests, an axiom-query file, and build instructions. The
final declarations have the ordinary logical dependencies
\texttt{propext}, \texttt{Classical.choice}, and \texttt{Quot.sound};
A.4 remains an explicit theorem hypothesis.

The arithmetic program \path{finite_attainment.py} verifies the
certificate in \cref{app:certificate} using integers and rational
fractions. The Lean project includes the same schedule bound in the
formal derivation of the endpoint theorems.

\section*{Use of computational tools}
ChatGPT and Codex (OpenAI) were used in developing the argument, writing and
refining the Lean formalization and exact-arithmetic code, and preparing the
manuscript.
{\small
\bibliographystyle{alpha}
\bibliography{references}
}
\end{document}